\theoremstyle{plain}
\newtheorem{theorem}{Theorem}[section]
\newtheorem{thm}[theorem]{Theorem}
\newtheorem{cor}[theorem]{Corollary}
\newtheorem{prop}[theorem]{Proposition}
\newtheorem{lem}[theorem]{Lemma}
\newtheorem*{theorem*}{Theorem}
\theoremstyle{definition}
\newtheorem{rem}[theorem]{Remark}
\newtheorem{defn}[theorem]{Definition}
\newtheorem{eg}[theorem]{Example}
\newtheorem{obs}[theorem]{Observation}
\newcommand{\bF}{{\mathbb{F}}}
\newcommand{\bN}{{\mathbb{N}}}
\newcommand{\bT}{{\mathbb{T}}}
\newcommand{\bZ}{{\mathbb{Z}}}
\newcommand{\bm}{{\mathbf{m}}}
\newcommand{\bn}{{\mathbf{n}}}
\newcommand{\bg}{{\mathbf{g}}}
  \newcommand{\A}{{\mathcal{A}}}
  \newcommand{\F}{{\mathcal{F}}}
  \newcommand{\G}{{\mathcal{G}}}
  \newcommand{\J}{{\mathcal{J}}}
  \newcommand{\K}{{\mathcal{K}}}
\renewcommand{\L}{{\mathcal{L}}}
\renewcommand{\O}{{\mathcal{O}}}
  \newcommand{\Q}{{\mathcal{Q}}}
\newcommand{\fA}{{\mathfrak{A}}}
\newcommand{\fk}{{\mathfrak{k}}}
\newcommand{\fs}{{\mathfrak{s}}}
\newcommand{\ft}{{\mathfrak{t}}}
\newcommand{\rC}{{\mathrm{C}}}
\newcommand{\upchi}{{\raise.35ex\hbox{\ensuremath{\chi}}}}
\newcommand{\foral}{\text{ for all }}
\newcommand{\qforal}{\quad\text{for all}\quad}
\newcommand{\Aut}{\operatorname{Aut}}
\newcommand{\id}{{\operatorname{id}}}
\newcommand{\spn}{\operatorname{span}}
\newcommand{\ca}{\mathrm{C}^*}
\newcommand{\Fn}{\mathbb{F}_n^+}
\newcommand{\Fth}{\mathbb{F}_\theta^+}
\newcommand{\mt}{\varnothing}
\newcommand{\ol}{\overline}
\begin{document}
\title[Boundary quotient C*-algebras]{Boundary quotient C*-algebras of products of odometers}
\author[H. Li]{Hui Li}
\address{Hui Li,
Research Center for Operator Algebras and Shanghai Key Laboratory of Pure Mathematics and Mathematical Practice, Department of Mathematics, East China Normal University, 3663 Zhongshan North Road, Putuo District, Shanghai 200062, China}
\address{Department of Mathematics $\&$ Statistics, University of Windsor, Windsor, ON N9B 3P4, CANADA}
\email{lihui8605@hotmail.com}
\author[D. Yang]{Dilian Yang}
\address{Dilian Yang,
Department of Mathematics $\&$ Statistics, University of Windsor, Windsor, ON
N9B 3P4, CANADA}
\email{dyang@uwindsor.ca}

\thanks{The first author was partially supported by Research Center for Operator Algebras of East China Normal University and was partially supported by Science and Technology Commission of Shanghai Municipality (STCSM), grant No. 13dz2260400.}
\thanks{The second author was partially supported by an NSERC Discovery Grant 808235.}

\begin{abstract}
In this paper, we study the boundary quotient C*-algebras associated to products of odometers. One of our main results
shows that the boundary quotient C*-algebra of the standard product of $k$ odometers
over $n_i$-letter alphabets ($1\le i\le k$) is always nuclear, and that
it is a UCT Kirchberg algebra
if and only if $\{\ln n_i: 1\le i\le k\}$ is rationally independent,
if and only if the associated single-vertex $k$-graph C*-algebra is simple,
To achieve this, one of our main steps is to construct a topological $k$-graph such that
its associated Cuntz-Pimsner C*-algebra is isomorphic to the boundary quotient C*-algebra.
Some relations between the boundary quotient C*-algebra and the C*-algebra $\Q_\bN$ introduced by Cuntz are also
investigated.
As an easy consequence of our main results, it settles a boundary quotient C*-algebra
constructed by  Brownlowe-Ramagge-Robertson-Whittaker.
\end{abstract}

\subjclass[2010]{46L05}
\keywords{C*-algebra; semigroup; odometer; topological $k$-graph; product system; Zappa-Sz\'ep product}

\maketitle

\section{Introduction}

In \cite{Li12}, Xin Li associated several C*-algebras to a discrete left cancellative semigroup $P$. One of them is called the full  C*-algebra $\ca(P)$ of $P$, which
is generated by an isometric representation of $P$ and a family of projections parametrized by a family of right ideals of $P$ satisfying certain relations.
Since then, there has been attracting a lot of attention to the study of semigroup C*-algebras. See, for example, \cite{ABLS16, BOS15, BLS17, BRRW14, Stam16, Star15} and the
references therein. In \cite{BRRW14}, Brownlowe-Ramagge-Robertson-Whittaker defined a quotient C*-algebra $\Q(P)$ of $\ca(P)$. They called it the boundary quotient
of $\ca(P)$. Roughly speaking, if we think of $\ca(P)$ as a ``Toeplitz type" C*-algebra, then $\Q(P)$ is of ``Cuntz-Pimsner type".

In \cite[Section 6]{BRRW14}, the authors investigated many examples of the boundary quotients of the full C*-algebras of semigroups coming from Zappa-Sz\'ep products,
which are also right least common multiple (LCM) semigroups. The last example there, i.e., \cite[Subsection 6.6]{BRRW14},
is concerned with the standard product of two odometers $(\bZ, \{0,1,\ldots, n-1\})$ and $(\bZ, \{0,1,\ldots, m-1\})$,
where $m$ and $n$ are two coprime positive integers greater than $1$. If we ``divide" the elements in $\{0,1,\ldots, mn-1\}$ by $n$ and $m$ respectively, then we get a bijection
$\theta$ from $\{0,1,\ldots, n-1\}\times \{0,1,\ldots, m-1\}$ to $\{0,1,\ldots, m-1\}\times \{0,1,\ldots, n-1\}$. Then we obtain a special semigroup $\Fth$, which is actually
a single-vertex $2$-graph (see \cite{DY091}). Since $n$ and $m$ are coprime, $\Fth$ is right LCM. Moreover, one can form the Zappa-Sz\'ep product
$\Fth\bowtie \bZ$, which turns out to be right LCM too, and so falls into the class studied in \cite{BRRW14}. It is easy to see that the $2$-graph C*-algebra  $\O_\theta$ of $\Fth$ is simple as
the coprimeness of $n$ and $m$ implies
the aperiodicity of $\Fth$ (see \cite{DY091}). However, unlike the other examples, $\ca(\Fth\bowtie \bZ)$ was not well-understood there.
As stated at the end of \cite{BRRW14}, ``It would be interesting, albeit outside the scope of this paper, to further understand $\ca(\Fth\bowtie\bZ)$ from the point of view of these
existing constructions."

Our original motivation of this paper comes from the example in \cite[Subsection 6.6]{BRRW14} mentioned above. One of the main results in this paper
shows that the boundary quotient C*-algebras $\Q(\Fth\bowtie \bZ)$ of the standard products $\Fth\bowtie \bZ$ of odometers $\{(\bZ, \{0,1,\ldots, n_i-1\})\}_{i=1}^k$
are always nuclear, but simple if and only if $\{\ln n_i: 1\le i\le k\}$ is rationally independent and also purely
infinite in these cases.
This obviously settles the boundary quotient C*-algebra introduced in \cite{BRRW14}.
To achieve our main goal, we first construct  a family of topological $k$-graphs,
which are $k$-dimensional analogues of Katsura's topological graph $E_{n,1}$ $(n\in \bN)$
in \cite{Kat08},
and then show that their associated Cuntz-Pimsner C*-algebras
are isomorphic to $\Q(\Fth\bowtie \bZ)$.
On the way to our main results, we carefully study the generators and relations of the boundary quotient C*-algebras of a class of
Zappa-Sz\'ep products of the form $\Fth\bowtie G$, where $\Fth$ is a single-vertex $k$-graph and $G$ is a group. We should mention that $\Fth$ here is
not necessary to be right LCM, and so $\Fth\bowtie G$ is not right LCM in general. Therefore, one can not apply the results in
the recent works on right LCM semigroups, such as \cite{ABLS16, BOS15, BLS17, BRRW14, Stam16, Star15}, to our cases.

The rest of this paper is organized as follows. In Section \ref{S:pre}, some necessary background, which will be used later, is given. With very careful analysis,
in Section \ref{S:genrel}, we exhibit the generators and relations of the boundary quotient C*-algebras of  Zappa-Sz\'ep products of the form $\Fth\bowtie G$,
where $\Fth$ is a single-vertex $k$-graph and $G$ is a group (see Theorem \ref{T:kQ(LambdaxG)}). As an important application of the results in Section \ref{S:genrel},
we obtain a very simple presentation of the boundary quotient C*-algebra of the standard product of $k$ odometers (see Definition \ref{D:podo}) in Section \ref{S:pssa}.
Roughly speaking, it is the universal C*-algebra generated by a unitary representation of $G$ and a $*$-representation of $\Fth$
 which are compatible with the odometer actions  (see Theorem \ref{T:universal}).
In our main section, Section \ref{S:main}, we first construct a class of topological $k$-graphs $\{\Lambda_\bn:\bn\in\bN^k\}$, which is a higher-dimensional analogue of a
class of topological graphs $\{E_{n,1}: n\in \bN\}$ given by Katsura in \cite{Kat08}. We associate to $\Lambda_\bn$ a product system $X(\Lambda_\bn)$ over $\bN^k$.
The first main result in this section shows that the associated Cuntz-Pimsner C*-algebra $\O_{X(\Lambda_\bn)}$
of $\Lambda_\bn$ is isomorphic to the boundary quotient C*-algebra of the standard product of $k$ odometers (see Theorem \ref{T:top2}).
Then, motivated by and with the aid of some results in \cite{Cun08, Kat08, Yam09}, we prove
Theorem \ref{T:simple} which states as follows:
$\O_{X(\Lambda_\bn)}$ is simple if and only if $\{\ln n_i: 1\le i\le k\}$ is rationally independent, and $\O_{X(\Lambda_\bn)}$ is also purely infinite in these cases.
The nuclearity of $\O_{X(\Lambda_\bn)}$ is obtained by applying some results in \cite{CLSV11, Yee07} to our case. 
Also $\O_{X(\Lambda_\bn)}$ satisfies the UCT from \cite{Tu99}. 
To summarize, we have the following results:
\begin{theorem*}[Theorem \ref{T:top2} and Theorem \ref{T:simplepure}]
Let $\Fth\bowtie\bZ$ be the Zappa-Sz\'ep product induced from the standard product of $k$ odometers $\{(\bZ, \{0,1,\ldots, n_i-1\})\}_{i=1}^k$. Then
\begin{itemize}
\item[(I)] $\Q(\Fth\bowtie \bZ)$ is isomorphic to $\O_{X(\Lambda_\bn)}$;

\item[(II)]
$\Q(\Fth\bowtie \bZ)$ is nuclear;

\item[(III)]
$\Q(\Fth\bowtie \bZ)$ is a unital UCT Kirchberg algebra $\Leftrightarrow$ $\{\ln {n_i}\}_{1\le i\le k}$ is rationally independent $\Leftrightarrow$ $\O_\theta$ is simple
$\Leftrightarrow$ $\Fth$ is aperiodic.
%
\end{itemize}
\end{theorem*}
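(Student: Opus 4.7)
The plan is to prove (I) first, treating the isomorphism with $\O_{X(\Lambda_\bn)}$ as the bridge through which the structural properties in (II) and (III) are transported. The starting point for (I) is the universal presentation of $\Q(\Fth\bowtie\bZ)$ obtained in Theorem \ref{T:universal}: the algebra is generated by a unitary representation of $\bZ$ together with a $*$-representation of $\Fth$ satisfying explicit compatibility relations that encode the odometer actions. I would construct the topological $k$-graph $\Lambda_\bn$ as a higher-rank analogue of Katsura's $E_{n,1}$, with vertex space $\bT$ and, in direction $i$, an edge space that realises the $n_i$-odometer via its range and source maps. Passing to the associated product system $X(\Lambda_\bn)$ over $\bN^k$, one can identify canonical generators of $\O_{X(\Lambda_\bn)}$---namely the canonical unitary coming from $C(\bT)$ and the Cuntz-Pimsner isometries indexed by letters in each of the $k$ directions---and verify that they satisfy precisely the relations of Theorem \ref{T:universal}. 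The two universal properties then supply mutually inverse $*$-homomorphisms, yielding the isomorphism.

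Given (I), part (II) reduces to nuclearity of $\O_{X(\Lambda_\bn)}$. Because each fibre of $X(\Lambda_\bn)$ is a finitely generated projective module over the nuclear algebra $C(\bT)$ and the product system is compactly aligned over $\bN^k$, the general nuclearity theorems of \cite{CLSV11, Yee07} apply. The same product-system groupoid model realises $\O_{X(\Lambda_\bn)}$ as the C*-algebra of an amenable \'etale groupoid, so Tu's theorem \cite{Tu99} supplies the UCT.

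The heart of (III) is the chain of equivalences linking simplicity of $\O_{X(\Lambda_\bn)}$, pure infiniteness, rational independence of $\{\ln n_i\}_{1\le i\le k}$, and aperiodicity of $\Fth$. The equivalences ``rational independence $\Leftrightarrow$ $\O_\theta$ simple $\Leftrightarrow$ $\Fth$ aperiodic'' are classical in the single-vertex $k$-graph literature, so the real task is to link these to simplicity of $\O_{X(\Lambda_\bn)}$. I would apply a topological-$k$-graph simplicity criterion in the spirit of \cite{Yam09}, modelled on Katsura's analysis in \cite{Kat08}, reducing simplicity of $\O_{X(\Lambda_\bn)}$ to aperiodicity of the induced $\bN^k$-action on the infinite path space of $\Lambda_\bn$. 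A Kronecker-style density argument on $\bT$ then converts rational independence of $\{\ln n_i\}$ into the required aperiodicity; conversely, a rational relation produces a proper closed invariant set and hence a proper ideal. Pure infiniteness in the simple case follows easily because each $n_i \ge 2$ provides Cuntz-Pimsner generators that are proper isometries, so infinite projections appear in every hereditary subalgebra, and (II) together with the UCT then places $\Q(\Fth\bowtie\bZ)$ in the Kirchberg class. The main obstacle is the topological-dynamical step: verifying that rational independence of the logarithms is precisely the condition under which no rotation of the circle is periodic along any nonzero element of $\bN^k$, in a form strong enough to drive the simplicity criterion.
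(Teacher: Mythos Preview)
Your outline for (I) and (II) matches the paper's approach: the isomorphism in (I) is obtained exactly by matching the canonical generators of $\O_{X(\Lambda_\bn)}$ against the presentation in Theorem~\ref{T:universal} and checking mutual inverses, and nuclearity plus the UCT come from the amenable groupoid model of \cite{Yee07} together with \cite{CLSV11} and \cite{Tu99}.

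There is, however, a genuine gap in your plan for (III), in the direction ``rational dependence $\Rightarrow$ $\O_{X(\Lambda_\bn)}$ not simple.'' You propose that a rational relation among the $\ln n_i$ produces a proper closed invariant set. This does not work: the orbit $\mathrm{Orb}^+((z,0))$ is dense in $\Lambda_\bn^0=\bT$ for \emph{every} $z$, regardless of rational independence, because the $\bn^p$-th roots of $z$ become $\epsilon$-dense as $\bn^p\to\infty$. Hence the boundary-path groupoid is always minimal, and non-simplicity cannot be detected by invariant sets. What fails under rational dependence is topological principalness (every infinite path satisfies $\sigma^p(\mu)=\sigma^q(\mu)$ once $\bn^p=\bn^q$), but failure of principalness does not by itself force non-simplicity. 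The paper bypasses dynamics here entirely: it builds a concrete representation of $\Q(\Fth\bowtie\bZ)$ on $\ell^2(\bZ)$, sending $f$ to the bilateral shift and $g_{x^i_\fs}$ to $\delta_m\mapsto\delta_{\fs+n_im}$, exhibits the element $\prod_{i\in A}g_{x_0^i}^{q_i-p_i}-\prod_{i\in B}g_{x_0^i}^{p_i-q_i}$ in the kernel, and then uses the gauge action to show this element is nonzero in $\Q(\Fth\bowtie\bZ)$. The same $\ell^2(\bZ)$ representation is also what drives the implication ``$\O_\theta$ simple $\Rightarrow$ rational independence,'' which you dismissed as classical; the cited single-vertex $k$-graph literature covers aperiodicity $\Leftrightarrow$ simplicity, but linking these to rational independence for \emph{this particular} $\theta$ still needs the concrete representation.

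Two smaller points. Your pure-infiniteness argument (``proper isometries, hence infinite projections in every hereditary subalgebra'') is too weak: a proper isometry in a simple nuclear C*-algebra does not force pure infiniteness. The paper instead verifies Yamashita's \emph{contracting} condition for $\Lambda_\bn$ and invokes \cite{Yam09}. And you do not address $k=\infty$; the paper handles it separately via an inductive-limit argument (Lemma~\ref{approximate lemma}), embedding each $\O_{X(\Lambda_{(n_1,\dots,n_i)})}$ unitally into $\O_{X(\Lambda_\bn)}$ and passing the finite-$k$ conclusions to the limit.
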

Finally, some relations between  $\Q(\Fth\bowtie \bZ)$ and the C*-algebra $\Q_\bN$ introduced by Cuntz in \cite{Cun08} are also briefly discussed at the end of the paper.


%
%
%
%
%

\section{Preliminaries}

\label{S:pre}

In this section, we provide some necessary background which will be useful later. We also take this chance to fix our terminologies and notation.

\subsection*{Notation and Conventions}

Let $\mathbb{N}$ be the additive semigroup of non-negative integers. Denote by $\mathbb{N}^\times$ the multiplicative semigroup of positive integers. Let $1 \leq k \leq \infty$. For any semigroup $P$, denote by $P^k$ (resp.~$\prod_{i=1}^{k}P$) the direct sum (resp.~product) of $k$ copies of $P$ (they coincide if $k < \infty$). Let $\{e_i\}_{i=1}^{k}$ be the standard basis of $\mathbb{N}^k$. For $n \in \mathbb{N}^k$, we write $n=(n_1,\ldots, n_k)$. For $n,m \in \mathbb{N}^k,z \in \prod_{i=1}^{k}\mathbb{T}$,
denote by $n \lor m$ (resp.~$n\wedge m$) the coordinatewise maximum (resp.~minimum) of $n$ and $m$, and $z^n:=\prod_{i=1}^{k}z_i^{n_i}$.

For $1\le n\in \bN$, let $[n]:=\{0,1,\ldots, n-1\}$. By $\Fn$, we mean the unital free semigroup with $n$ generators.

In this paper, $k$ is an arbitrarily fixed positive integer which could also be $\infty$, unless otherwise specified.

All semigroups in this paper are assumed to be unital (and so are monoids). For a semigroup $U$, its identity is denoted by $1_U$ (or just $1$ if the context is clear).

\subsection{Cuntz-Pimsner algebras of product systems over $\mathbb{N}^k$}
In this subsection we recap the notion of product systems over $\mathbb{N}^k$ from \cite{Fow02}.

Let $A$ be a C*-algebra. A \emph{C*-correspondence} over $A$ (see \cite{Fow02, FMR03}) is a right Hilbert $A$-module $X$ together with a $*$-homomorphism $\phi: A\to \L(X)$, which gives a left
action of $A$ on $X$ by $a\cdot x:=\phi(a)x$ for all $a\in A$ and $x\in X$.  A (Toeplitz) \textit{representation} of $X$ in a C*-algebra $B$ is a pair $(\psi, \pi)$, where $\psi: X\to B$
is a linear map, and $\pi: A\to B$ is a homomorphism such that
\[
\psi(a\cdot x)=\pi(a)\psi (x);\  \psi(x)^*\psi(y)=\pi(\langle x,y\rangle)\qforal a\in A\text{ and }x,y\in X.
\]
Notice that the relation $\psi(x\cdot a)=\psi (x)\pi(a)$ holds automatically due to the above second relation.
It turns out that there is a homomorphism $\psi^{(1)}: \K(X)\to B$ satisfying
\begin{align}
\label{E:psi(1)}
\psi^{(1)}(\Theta_{x,y})=\psi(x)\psi(y)^*\qforal x,y\in X,
\end{align}
where $\Theta_{x,y}(z):=x \cdot \langle y, z\rangle_A$ for $z\in X$ is a generalized rank-one operator.
A representation $(\psi,\pi)$ is said to be \textit{Cuntz-Pimsner covariant} if
\begin{align}
\label{E:CP}
\psi^{(1)}(\phi(a))=\pi(a) \qforal a\in \phi^{-1}(\K(X)).
\end{align}

Recall that $X$ is said to be \textit{essential} if
$\ol\spn\{\phi(a) x: a\in A, x\in X\}=X$, and \textit{regular} if the left action $\phi$ is injective and $\phi(A)\subseteq \K(X)$.

\begin{defn}
Let $A$ be a C*-algebra, and let $X=\bigsqcup_{n \in \mathbb{N}^k}X_n$ be a semigroup such that $X_n$ is a C*-correspondence over $A$ for all $n \in \mathbb{N}^k$.
Then $X$ is called a \emph{product system over $\mathbb{N}^k$ with coefficient $A$} if
\begin{enumerate}
\item $X_0=A$;
\item $X_n \cdot X_m \subset X_{n+m}$ for all $n$, $m \in \mathbb{N}^k$;
\item for $n,m \in \mathbb{N}^k \setminus \{0\}$, there exists an isomorphism from $X_n \otimes_A X_m$ onto $X_{n+m}$,
where $X_n \otimes_A X_m$ denotes the balanced tensor product, by sending $x\otimes y$ to $xy$ for all $x \in X_n$ and $y \in X_m$;
\item for $n \in \mathbb{N}^k$, the multiplication $X_0 \cdot X_n$ is implemented by the left action of $A$ on $X_n$, and the multiplication $X_n \cdot X_0$ is implemented by the right action of $A$ on $X_n$.
\end{enumerate}
\end{defn}

\begin{defn}
\label{D:Toep}
Let $A$, $B$ be C*-algebras, $X$ be a product system over $\mathbb{N}^k$ with coefficient $A$, and let $\psi:X \to B$ be a map.
For $n \in \mathbb{N}^k$, denote by $\psi_n:=\psi \vert_{X_n}$. Then $\psi$ is called a (Toeplitz) \emph{representation} of $X$ if
\begin{enumerate}
\item[(T1)] $(\psi_{n},\psi_0)$ is a representation of $X_n$ for all $n \in \mathbb{N}^k$; and

\item[(T2)] $\psi_n(x)\psi_m(y)=\psi_{n+m}(xy)$ for all $n$, $m \in \mathbb{N}^k$, $x \in X_n$, $y \in X_m$.
\end{enumerate}
We write $\psi_n^{(1)}$ for the homomorphism from $\mathcal{K}(X_n)$ to $B$ as in \eqref{E:psi(1)}. The representation $\psi$ is said to be
\emph{Cuntz-Pimsner covariant} if $(\psi_{n},\psi_0)$ is Cuntz-Pimsner covariant for all $n \in \mathbb{N}^k$.

The product system $X$ is said to be \emph{essential} (resp.~\textit{regular}) if $X_n$ is essential (resp.~regular) for all $n \in \mathbb{N}^k$.
\end{defn}

\subsection*{Standing Assumptions:} \textsf{All product systems are always assumed to be essential and regular throughout the rest of the paper.}
Under these assumptions, every Cuntz-Pimsner covariant representation is automatically Nica covariant (see \cite[Proposition 5.4]{Fow02}).

\begin{prop}
Let $X$ be a product system over $\mathbb{N}^k$ with coefficient $A$.
Then there exists a universal Cuntz-Pimsner covariant representation $j_X:X \to \mathcal{O}_X$ such that $j_X$ generates $\mathcal{O}_X$, and for any  Cuntz-Pimsner
covariant representation $J$ of $X$ into a C*-algebra $B$, there is a unique homomorphism $\tilde J: \O_X\to B$ such that $\tilde J\circ j_X=J$.
The C*-algebra $\mathcal{O}_X$ is called the \emph{Cuntz-Pimsner algebra} of $X$.
\end{prop}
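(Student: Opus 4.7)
The plan is to perform the standard universal construction of a C*-algebra defined by generators and relations, using the standing assumptions that $X$ is essential and regular to ensure that the construction yields a non-degenerate object via Fowler's framework.

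First, I would establish the crucial bound $\|\psi_n(x)\| \le \|x\|$ for every Cuntz-Pimsner covariant representation $\psi$ and every $x \in X_n$. This follows from the C*-identity together with the representation relation $\psi_n(x)^* \psi_n(x) = \psi_0(\langle x,x\rangle_A)$ and the fact that the *-homomorphism $\psi_0$ is norm-decreasing; explicitly, $\|\psi_n(x)\|^2 = \|\psi_0(\langle x,x\rangle_A)\| \le \|\langle x,x\rangle_A\| = \|x\|^2$. Hence the norms of the images of the generators are uniformly bounded over the entire class of Cuntz-Pimsner covariant representations of $X$.

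Next, I would construct $\O_X$ as the Hausdorff completion of the universal *-algebra generated by symbols $\iota(x)$, $x\in X$, modulo the two-sided *-ideal encoding (T1), (T2), and the Cuntz-Pimsner covariance relation \eqref{E:CP}, equipped with the supremum seminorm over a set-sized family of Cuntz-Pimsner covariant representations (restricting, say, to Hilbert spaces of bounded cardinality to avoid set-theoretic issues). The bound above guarantees that this seminorm is finite, and it is a C*-seminorm by construction. Setting $j_X(x)$ to be the canonical image of $\iota(x)$ yields a Cuntz-Pimsner covariant representation by design. The universal property is then immediate: any Cuntz-Pimsner covariant $J \colon X \to B$ factors through the free *-algebra, descends to the quotient since it respects the defining relations, and extends continuously to $\O_X$ by the very definition of the supremum norm; uniqueness of $\tilde J$ follows because $j_X(X)$ generates $\O_X$.

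The main obstacle is to verify that $\O_X$ is non-trivial, equivalently that at least one non-zero Cuntz-Pimsner covariant representation exists. Under our standing hypotheses, this is supplied by Fowler's Fock-space construction for product systems over $\bN^k$ in \cite{Fow02}: regularity ensures that the left action lands in $\K(X_n)$ so that the covariance condition \eqref{E:CP} on each fiber can be realized by quotienting the Toeplitz Fock representation by the appropriate ideal, while essentiality guarantees non-degeneracy of the resulting representation. An alternative route is to invoke the Cuntz-Nica-Pimsner framework of \cite{CLSV11, Yee07}, which in the compactly-aligned regular case recovers the same algebra. Either way, one obtains a faithful Cuntz-Pimsner covariant representation, ensuring that $j_X$ is injective on each fiber $X_n$ and that $\O_X$ is a genuine C*-algebra satisfying the stated universal property.
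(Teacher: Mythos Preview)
The paper does not prove this proposition: it is stated without proof in the preliminaries as a standard background result, implicitly drawn from Fowler's framework \cite{Fow02} (with the later refinements in \cite{CLSV11, SY10} invoked elsewhere in the paper). So there is no ``paper's own proof'' to compare against.

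Your sketch is the standard universal-$\ca$-algebra construction and is correct for the statement as written. One small comment: the non-triviality discussion, while natural, is not strictly needed for the bare existence statement, since the universal construction goes through even if $\O_X$ were zero; the proposition only asserts the universal property, and the paper relies on \cite{SY10, CLSV11} later for the fact that $j_{X,0}$ is injective. Your appeal to the Fock representation and its Cuntz-Pimsner quotient is the right idea for non-triviality, but be aware that showing the quotient of the Toeplitz algebra by the covariance ideal remains non-zero is itself a theorem (this is where regularity is genuinely used), not an immediate consequence of the construction; citing \cite{Fow02} or \cite{SY10} for this step, as you do, is the appropriate move.
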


For a representation $\psi$ of $X$, a \emph{gauge action} is a strongly continuous homomorphism $\alpha:\prod_{i=1}^{k}\mathbb{T} \to \mathrm{Aut}(\ca(\psi(X)))$ such that $\alpha_z(\psi_n(x))=z^n \psi_n(x)$ for all $z \in \prod_{i=1}^{k}\mathbb{T}, n \in \mathbb{N}^k, x \in X_n$. The universal Cuntz-Pimsner covariant representation $j_X$ admits a gauge action $\gamma:\prod_{i=1}^{k}\mathbb{T} \to \mathrm{Aut}(\mathcal{O}_X)$. The gauge-invariant uniqueness theorem for a product system over $\mathbb{N}^k$ is highly nontrivial to achieve. However, this problem was completely resolved by Carlsen-Larsen-Sims-Vittadello in \cite{CLSV11} (their nice work covers much more general product systems). Combining \cite[Corollary~4.12]{CLSV11} and \cite[Corollary~5.2]{SY10}, we obtain the following version of the gauge-invariant uniqueness theorem, which is analogous to the one in \cite{FMR03}.

\begin{thm}\label{gauge-inv uni thm}
Let $X$ be a product system over $\mathbb{N}^k$ with coefficient $A$ and let $\psi$ be a Cuntz-Pimsner covariant representation of $X$ which admits a gauge action. Denote by
$h:\mathcal{O}_X \to \ca(\psi(X))$ the homomorphism induced from the universal property of $\mathcal{O}_X$. If $h \vert_{j_{X,0}(A)}$ is injective. Then $h$ is an isomorphism.
\end{thm}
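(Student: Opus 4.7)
The plan is to combine two existing results from the product-system literature, as indicated in the discussion preceding the theorem, to obtain a gauge-invariant uniqueness theorem suited to the setting of Definition \ref{D:Toep}. Surjectivity of $h$ is immediate, since $\ca(\psi(X))$ is generated by $\psi(X) = h(j_X(X))$, so the real content is injectivity.

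For injectivity, I would first identify $\O_X$ with the Cuntz-Nica-Pimsner algebra $N\O_X$ of \cite{SY10}. Under our standing assumptions that $X$ is essential and regular, Cuntz-Pimsner covariance in the sense of Definition \ref{D:Toep} is equivalent to Cuntz-Nica-Pimsner covariance in the sense of \cite{SY10}: Nica covariance is automatic by \cite[Proposition 5.4]{Fow02} (as recorded in our Standing Assumptions), and the additional covariance conditions coincide because regularity forces $\phi^{-1}(\mathcal{K}(X_n)) = A$ for every $n \in \mathbb{N}^k$. This comparison is precisely the content of \cite[Corollary 5.2]{SY10} and delivers a canonical isomorphism $\O_X \cong N\O_X$ intertwining the respective universal properties and the gauge actions.

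With this identification in hand, the map $h$ becomes a gauge-equivariant homomorphism out of $N\O_X$ that is faithful on the embedded copy of $A$. The conclusion then follows directly from the co-universal gauge-invariant uniqueness theorem \cite[Corollary 4.12]{CLSV11} of Carlsen-Larsen-Sims-Vittadello, which asserts precisely that such a homomorphism must be injective. The main obstacle, though essentially bookkeeping, is to verify carefully that the three different covariance conditions appearing in \cite{CLSV11}, \cite{SY10}, and Definition \ref{D:Toep} agree under our essentialness and regularity assumptions, and that the universal C*-algebra $\O_X$ introduced in the preceding proposition coincides with $N\O_X$ under this agreement. Once these compatibilities are in place, chaining the two cited corollaries yields the theorem with no further computation.
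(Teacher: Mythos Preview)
Your proposal is correct and matches the paper's approach exactly: the paper does not give a detailed proof but simply states that the theorem is obtained by combining \cite[Corollary~4.12]{CLSV11} with \cite[Corollary~5.2]{SY10}, which is precisely the chain of reasoning you outline. Your additional remarks on why the various covariance notions agree under the standing essentialness and regularity assumptions are accurate and fill in the bookkeeping the paper leaves implicit.
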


For later use, let us record the following two simple lemmas.

 \begin{lem}\label{L:Toep}
 Let $A,B$ be C*-algebras where $A$ is generated by $\G$.
Let $X$ be a C*-correspondence over $A$,
which has a subset $\F$ whose linear span is dense in $X$.
Let $\psi_0:\spn\mathcal{F} \to B$ be a linear map, and $\pi:A \to B$ be a homomorphism. Suppose that
\begin{enumerate}
\item $\mathcal{G} \cdot \mathcal{F} \subset \mathcal{F}$;
\item $\psi_0(a \cdot x)=\pi(a)\psi_0(x)$ for all $x \in \mathcal{F}$ and $a \in \mathcal{G}$;
\item $\psi_0(x)^*\psi_0(y)=\pi(\langle x,y \rangle_A)$ for all $x,y \in \mathcal{F}$.
\end{enumerate}
Then $\psi_0$ is a bounded linear map with the unique extension $\psi$ to $X$, and $(\psi,\pi)$ is a representation of $X$. Moreover, if
$\psi^{(1)}(\phi(a))=\pi(a)$ for all $a \in \mathcal{G}$, then $(\psi,\pi)$ is also Cuntz-Pimsner covariant.
\end{lem}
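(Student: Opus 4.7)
The plan is to extend $\psi_0$ from $\spn\mathcal{F}$ to a bounded linear map on $X$ via a standard boundedness argument, then verify both Toeplitz representation relations on the extension, and finally dispatch the Cuntz-Pimsner covariance by a closed $*$-subalgebra argument.

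By sesquilinearity, condition (3) extends to $x, y \in \spn\mathcal{F}$, yielding $\psi_0(x)^*\psi_0(x) = \pi(\langle x, x\rangle_A)$. Since $\pi$ is norm-decreasing, $\|\psi_0(x)\|^2 \leq \|\langle x,x\rangle_A\| = \|x\|^2$, so $\psi_0$ is a contraction and extends uniquely to a bounded linear $\psi\colon X \to B$. The inner product identity $\psi(x)^*\psi(y) = \pi(\langle x, y\rangle_A)$ then persists on all of $X$ by continuity.

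The main step is the left action identity $\psi(a\cdot x) = \pi(a)\psi(x)$ for all $a\in A$ and $x\in X$. Condition (2), together with (1), linearity, and continuity in $x$, gives the identity on $\mathcal{G}\times X$. Setting
\[
R := \{a\in A: \psi(a\cdot x) = \pi(a)\psi(x) \foral x\in X\},
\]
one sees that $R$ is a closed linear subspace of $A$ containing $\mathcal{G}$; from the computation $\psi(ab\cdot x) = \psi(a\cdot(b\cdot x)) = \pi(a)\psi(b\cdot x) = \pi(ab)\psi(x)$ for $a,b\in R$, it is also closed under products, hence a closed subalgebra. The hard part is to show that $R$ is closed under involution, after which $R$ is the closed $*$-subalgebra of $A$ generated by $\mathcal{G}$, i.e.\ $A$. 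For this, I fix $a\in R$ and set $c := \psi(a^*\cdot x) - \pi(a^*)\psi(x)$; using $a\in R$ together with the inner product identity, a short manipulation gives $\psi(y)^*c = 0$ for every $y\in X$. To then force $c=0$, I would expand $c^*c$ and apply the auxiliary identity $\psi(x)^*\pi(b)\psi(y) = \pi(\langle x, b\cdot y\rangle_A)$, which holds directly for $b\in\mathcal{G}\cup\mathcal{G}^*$ and whose defining set is $*$-closed; this is the main obstacle of the argument.

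For the moreover part, once $(\psi,\pi)$ is a representation, $\psi^{(1)}\colon\K(X)\to B$ is a well-defined $*$-homomorphism by \eqref{E:psi(1)}. The set $\{a\in A: \psi^{(1)}(\phi(a)) = \pi(a)\}$ is then a closed $*$-subalgebra of $A$ containing $\mathcal{G}$, and hence all of $A$, giving Cuntz-Pimsner covariance on $\phi^{-1}(\K(X))$.
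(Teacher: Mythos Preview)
The paper omits the proof entirely (``straightforward and left to the reader''), so there is no reference argument to compare against; I will therefore just assess your attempt.

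Your treatment of boundedness, the unique extension of $\psi_0$, and the inner-product relation $\psi(x)^*\psi(y)=\pi(\langle x,y\rangle_A)$ is correct and standard. The ``moreover'' part is also fine: once $(\psi,\pi)$ is a representation, both $\psi^{(1)}\circ\phi$ and $\pi$ are $*$-homomorphisms on $A$ (here one tacitly uses $\phi(A)\subset\K(X)$, which is part of the paper's standing regularity assumption), so the equalizer is a closed $*$-subalgebra containing $\G$ and hence equals $A$.

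The gap is exactly where you flag it: the $*$-closure of $R=\{a\in A:\psi(a\cdot x)=\pi(a)\psi(x)\ \forall x\}$. Your computation $\psi(y)^*c=0$ for $c=\psi(a^*\!\cdot x)-\pi(a^*)\psi(x)$ is correct, but expanding $c^*c$ leads to the term $\psi(x)^*\pi(aa^*)\psi(x)$, and to kill it you need $aa^*\in S:=\{b:\psi(x)^*\pi(b)\psi(y)=\pi(\langle x,b\cdot y\rangle)\}$. From $a\in R$ you only get $a^*a\in S$, not $aa^*$. Your auxiliary set $S$ is indeed $*$-closed, but you have not shown it is multiplicatively closed, and I do not see how to do so without already knowing $\pi(b)\psi(y)=\psi(b\cdot y)$ for $b\in S$; the argument is circular at this point. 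In particular, nothing you have written forces $c=0$.

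The clean fix, and almost certainly what the authors have in mind, is to observe that if $a\in R$ is invertible in $A$ then $a^{-1}\in R$: for $x\in X$ write $x=a\cdot y$, so $\psi(a^{-1}\cdot x)=\psi(y)$ while $\pi(a^{-1})\psi(x)=\pi(a)^{-1}\pi(a)\psi(y)=\psi(y)$. Hence whenever $\G$ consists of unitaries (as in the paper's sole application, where $\G=\{\iota\}$ with $\iota$ the standard unitary generator of $\rC(\bT)$), one gets $\G^*=\G^{-1}\subset R$, so $R\cap R^{*}$ is a closed $*$-subalgebra containing $\G$ and therefore equals $A$. If you want the lemma in the stated generality, you should either add such a hypothesis on $\G$ (e.g.\ $\G=\G^*$, or $\G$ consists of invertibles) or supply an independent argument for $\G^*\subset R$; as written, your proof of the left-module relation is incomplete.
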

\begin{proof}
It is straightforward to prove and left to the reader.
\end{proof}

\begin{lem}\label{L:CP}
Let $X$ be a product system over $\mathbb{N}^k$ with coefficient $A$ and let $\psi:X \to B$ be a representation.
Suppose that $(\psi_{e_i},\psi_0)$ is Cuntz-Pimsner covariant for all $1 \leq i \leq k$. Then $\psi$ is Cuntz-Pimsner covariant.
\end{lem}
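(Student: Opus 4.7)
My proof proposal proceeds by induction on $|n| := n_1 + \cdots + n_k$. The case $|n|=0$ is immediate ($X_0 = A$), and $|n|=1$ is exactly the hypothesis. For the inductive step, assume $(\psi_m, \psi_0)$ is Cuntz--Pimsner covariant for every $m \in \bN^k$ with $|m| < N$, fix $n$ with $|n| = N \geq 2$, pick an index $j$ with $n_j \geq 1$, and write $n = m + e_j$.

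The structural tool is the canonical isomorphism $X_n \cong X_m \otimes_A X_{e_j}$ supplied by the product system, under which the multiplication sends $x \otimes y$ to $xy$, property (T2) gives $\psi_n(xy) = \psi_m(x)\psi_{e_j}(y)$, and the left action transfers as $\phi_n(a) \leftrightarrow \phi_m(a) \otimes \mathrm{id}_{e_j}$. The key computation I would carry out is this: given $a \in A$, regularity lets me approximate $\phi_m(a) \in \K(X_m)$ in norm by finite sums $\sum_i \Theta_{x_i, y_i}$; along an approximate unit $(u_\lambda)$ of $A$, regularity of $X_{e_j}$ places $\phi_{e_j}(u_\lambda) \in \K(X_{e_j})$, and I further approximate each $\phi_{e_j}(u_\lambda)$ in norm by $\sum_k \Theta_{z_k, w_k}$. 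Then $\phi_n(au_\lambda)$ is a norm-limit of finite sums of genuine rank-one operators $\Theta_{x_i z_k, y_i w_k} \in \K(X_n)$, and applying $\psi_n^{(1)}$ together with (T2) yields
\[
\psi_n^{(1)}(\Theta_{x_i z_k, y_i w_k}) = \psi_m(x_i)\,\psi_{e_j}(z_k)\psi_{e_j}(w_k)^*\,\psi_m(y_i)^* = \psi_m(x_i)\,\psi_{e_j}^{(1)}(\Theta_{z_k, w_k})\,\psi_m(y_i)^*.
\]
Summing over $k$ and using CP covariance at $e_j$ collapses the middle factor to $\psi_0(u_\lambda)$; passing to the limit in $\lambda$ via essentiality of $X_m$ (so that $\psi_m(x_i)\psi_0(u_\lambda) = \psi_m(x_i \cdot u_\lambda) \to \psi_m(x_i)$) yields $\sum_i \psi_m^{(1)}(\Theta_{x_i, y_i})$, which converges to $\psi_m^{(1)}(\phi_m(a)) = \psi_0(a)$ by the inductive hypothesis applied at $m$. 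Since $\phi_n(au_\lambda) \to \phi_n(a)$ in $\K(X_n)$ and $\psi_n^{(1)}$ is norm-continuous, the left-hand limit is $\psi_n^{(1)}(\phi_n(a))$, finishing the step.

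The principal obstacle is the rigorous handling of the identification $\phi_n(a) \leftrightarrow \phi_m(a) \otimes \mathrm{id}_{e_j}$: the ``$\otimes \mathrm{id}_{e_j}$'' defines an ampliation $\K(X_m) \to \L(X_n)$ whose image does not sit inside $\K(X_n)$ on individual rank-ones, so one cannot naively approximate $\phi_m(a) \otimes \mathrm{id}_{e_j}$ by $\sum \Theta_{x_i, y_i} \otimes \mathrm{id}_{e_j}$ in $\K(X_n)$. Replacing $\mathrm{id}_{e_j}$ by $\phi_{e_j}(u_\lambda)$ (so that all intermediate sums land in $\K(X_n)$) and carefully ordering the three nested limits---the rank-one decomposition of $\phi_m(a)$, the rank-one decomposition of $\phi_{e_j}(u_\lambda)$, and $\lambda \to \infty$---is the technical heart. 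A cleaner alternative I would check is whether the tensor-product-of-CP-covariant-representations package already available in the Cuntz--Pimsner literature for $A$-correspondences applies directly to $(X_m, \psi_m)$ and $(X_{e_j}, \psi_{e_j})$, since that would subsume the above bookkeeping in one line.
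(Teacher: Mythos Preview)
Your approach is correct and follows the same inductive tensor-product route as the paper: write $X_n \cong X_m \otimes_A X_{e_j}$, use that the two factor representations are Cuntz--Pimsner covariant, and conclude that their tensor product is as well. The paper's proof is two sentences long because it outsources the entire technical core to \cite[Lemma~3.10]{Pim97}, which is exactly the ``tensor-product-of-CP-covariant-representations package'' you name in your final paragraph as the cleaner alternative. Your hands-on argument with the approximate unit $(u_\lambda)$ to replace $\mathrm{id}_{e_j}$ by $\phi_{e_j}(u_\lambda)$ (so that all intermediate operators are genuinely compact) is precisely the content of that lemma unwound; the nested-limit bookkeeping you flag as the ``principal obstacle'' is real but routine, and is what Pimsner's lemma packages up. So your detailed version and the paper's one-line citation are the same proof at different levels of abstraction.
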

\begin{proof}
For $1 \leq i,j \leq k$, there exists an isomorphism from $X_{e_i} \otimes X_{e_j}$ onto $X_{e_i+e_j}$ by sending $x \otimes y$ to $x \cdot y$ for all $x \in X_{e_i},y \in X_{e_j}$, and there exists a linear map $\psi_{e_i} \otimes \psi_{e_j}:X_{e_i} \otimes X_{e_j} \to B$ such that $\psi_{e_i} \otimes \psi_{e_j} (x \otimes y)=\psi_{e_i}(x)\psi_{e_j}(y)$ for all $x \in X_{e_i}$ and
$y \in X_{e_j}$. Similar to the proof of \cite[Lemma~3.10]{Pim97}, one can see that $(\psi_{e_i} \otimes \psi_{e_j},\psi_0)$ is Cuntz-Pimsner covariant.
Hence $\psi$ is Cuntz-Pimsner covariant.
\end{proof}

\subsection{Topological $k$-graphs}

In this subsection we recall the definition of topological $k$-graphs from  \cite{Yee07} which are generalizations of $k$-graphs studied by Kumjian-Pask in \cite{KP00}. Then we briefly
recall  the product system associated to each topological $k$-graph from \cite{Yam09}.

\begin{defn}
A \emph{topological $k$-graph} is a $k$-graph $\Lambda$ equipped with a locally compact Hausdorff topology such that
\begin{itemize}
\item the composition of paths is continuous and open,
\item the range map $r$, the source map $s$, and the degree map $d$ are all continuous,
\item $s$ is a local homeomorphism.
\end{itemize}
For $n \in \mathbb{N}^k$, let $\Lambda^n$ be the set of all paths of degree $n$.
The topological $k$-graph $\Lambda$ is said to be \emph{regular} if $r \vert_{\Lambda^{e_i}}$ is proper and surjective for all $1 \leq i \leq k$.
\end{defn}

One can show that if $\Lambda$ is regular then $r \vert_{\Lambda^n}$ is proper and surjective for all $n \in \mathbb{N}^k$.

Let $\Lambda$ be a regular topological $k$-graph. One can construct a product system $X(\Lambda)$ over $\bN^k$ as follows:
Given $n\in \bN^k$, define a topological graph $E_n:=(\Lambda^0,\Lambda^n,r,s)$.
Let $X_n(\Lambda):=X(E_n)$ be the graph correspondence of $E_n$ in the sense of Katsura (cf. \cite{Katsura:TAMS04}). By \cite[Proposition~1.10]{Katsura:TAMS04},
\[
X_n(\Lambda)=\{x \in \rC(\Lambda^n):\langle x,x \rangle_{C_0(\Lambda^0)} \in C_0(\Lambda^0)\}.
\]
For $n,m \in \mathbb{N}^k, x \in X_n$ and $y \in X_m$, define a diamond operation $x \diamond y:\Lambda^{n+m} \to \mathbb{C}$ by
\[
x \diamond y(\mu):=x(\alpha)y(\beta)\text{ for } \mu \in \Lambda^{n+m} \text{ with }\mu=\alpha\beta,d(\alpha)=n,d(\beta)=m.
\]
Notice that $x\diamond y$ is well-defined due to the unique factorization of $\mu$.
Let
 \[
 X(\Lambda):=\bigsqcup_{n \in \mathbb{N}^k}X_n(\Lambda).
 \]
 Then $X(\Lambda)$ is a product system over $\mathbb{N}^k$ with coefficient $C_0(\Lambda^0)$ under $\diamond$.
 We call $X(\Lambda)$ the \textit{product system associated to $\Lambda$}. Notice that $X(\Lambda)$ is essential and regular.


\subsection{Single-vertex $k$-graphs}
\label{SS:single}

In this subsection we recap the theory of single-vertex $k$-graphs and their C*-algebras from \cite{DY091, DY09}. Let $\Lambda$ be a single-vertex $k$-graph. For $1\le i\le k$, let
$
\{x^i_\fs:\fs\in[m_i]\}
$
be the set of all edges in $\Lambda$ of degree $e_i$.
It follows from the factorization property of $\Lambda$ that,
for $1\le i<j\le k$, there is a bijection $\theta_{ij}: [m_i]\times[m_j]\to [m_j]\times [m_i]$
satisfying the following \textit{$\theta$-commutation relations}
\begin{align*}
 x^i_\fs x^j_\ft = x^j_{\ft'} x^i_{\fs'}
 \quad\text{if}\quad
 \theta_{ij}(\fs,\ft) = (\ft',\fs').
\end{align*}
Then $\Lambda$ coincides with the semigroup $\Fth$ defined as follows
\cite{DY09}
\begin{align*}
\Fth=\big\langle x_\fs^i: \fs\in [m_i],\, 1\le i\le k;\,  x^i_\fs x^j_\ft = x^j_{\ft'} x^i_{\fs'}
 \text{ whenever }
 \theta_{ij}(\fs,\ft) = (\ft',\fs') \big\rangle,
\end{align*}
which is also occasionally written as
\begin{align*}
\Fth=\big\langle x_\fs^i:  \fs\in [m_i],\ 1\le i\le k; \, \theta_{ij}, \, 1\le i<j\le k \big\rangle.
\end{align*}
It is worthwhile to mention that $\Fth$ has the cancellation property due to the factorization property of $\Lambda$.
It follows from the $\theta$-commutation relations that every element $w\in \Fth$ has the normal form
$
w=x_{u_1}^1\cdots x_{u_k}^k
$
for some $u_i\in\bF_{m_i}^+$ ($1\le i\le k$). Here we use
the multi-index notation: $x^i_{u_i}=x^i_{\fs_1}\cdots x^i_{\fs_n}$ if $u_i=\fs_1\cdots\fs_n\in\bF_{m_i}^+$.

For $k=2$, every permutation $\theta$ determines a 2-graph. But for $k\ge 3$, $\theta=\{\theta_{ij}:1\le i<j\le k\}$ determines a
$k$-graph if and only if it satisfies a \textit{cubic condition} (see, e.g., \cite{DY09, FS02} for its definition).
Here it is probably worth mentioning that this is also related to the Yang-Baxter equation (see \cite{Yan161, Yan162}).

By a \textit{$*$-representation} $S$ of $\Fth$ in a C*-algebra $\A$, we mean that $S$ is a semigroup homomorphism of $\Fth$ which subjects to the relations:
for $1\le i\le k$, $S_{x^i_\fs}^*S_{x^i_\fs}=1$ $(\fs\in[m_i]$), and the defect free condition
$
\sum_{\fs\in[m_i]}S_{x^i_\fs}S_{x^i_\fs}^*=1.
$
The \textit{$k$-graph C*-algebra} $\O_\theta$ of $\Fth$ is defined to be the universal C*-algebra for $*$-representations of $\Fth$.

\subsection{Zappa-Sz\'{e}p products of semigroups}

In this subsection we review the definitions of the full C*-algebra of a left cancellative semigroup from \cite{Li12}, its boundary quotient C*-algebra from \cite{BRRW14}, and
 the Zappa-Sz\'{e}p product of two semigroups from \cite{BRRW14} (see also \cite{Bri05}). The odometer action is also given to induce a class of Zappa-Sz\'{e}p products.

Let $P$ be a left cancellative semigroup.
For $p \in P$, we also denote by $p$ the left multiplication map $q \mapsto pq$. The set of \emph{constructible right ideals} is defined as
\[
\mathcal{J}(P):=\{p_1^{-1}q_1\cdots p_n^{-1}q_nP:n \geq 1, p_1,q_1,\dots, p_n,q_n \in P\} \cup \{\mt\}.
\]

A finite subset $F$ of $\mathcal{J}(P)$ is called a \emph{foundation set} if for each $Y \in \mathcal{J}(P)$ there exists $X \in F$ such that $X \cap Y \neq \mt$.

For $p,q \in P$, we say that $p$ is a \textit{right multiple} of $q$ if there exists $r \in P$ such that $p=qr$. $P$ is said to be \emph{right LCM} if any two elements of $P$ having a right common multiple have a right least common multiple.

\begin{defn}[{\cite[Definition 2.2]{Li12}, \cite[Definition 5.1]{BRRW14}}]
\label{D:Li}
Given a left cancellative semigroup $P$, the \emph{full semigroup C*-algebra} $\ca(P)$ of $P$ is the universal C*-algebra generated by a family of isometries $\{v_p\}_{p \in P}$ and a family of projections $\{e_X\}_{X \in \mathcal{J}(P)}$ satisfying the following relations:
\begin{itemize}
\item[(L1)] $v_p v_q=v_{pq}$ for all $p,q \in P$;
\item[(L2)]  $v_p e_X v_p^*=e_{pX}$ for all $p \in P,X \in \mathcal{J}(P)$;
\item[(L3)]  $e_\mt=0$ and $e_P=1$;
\item[(L4)]  $e_X e_Y=e_{X \cap Y}$ for all $X,Y \in \mathcal{J}(P)$.
\end{itemize}

The \textit{boundary quotient} $\Q(P)$ of $\ca(P)$ is the universal C*-algebra generated by a family of isometries $\{v_p\}_{p \in P}$ and a family of projections $\{e_X\}_{X \in \mathcal{J}(P)}$ satisfying Conditions (L1)--(L4), and furthermore
\begin{itemize}
\item[(Q5)]
$\displaystyle \prod_{X \in F}(1-e_X)=0$ for all foundation sets $F \subset \mathcal{J}(P)$.
\end{itemize}
\end{defn}

In this paper, $\Q(P)$ is simply called the \textit{boundary quotient C*-algebra of $P$}.

\begin{defn}
[{\cite[Definition~3.1]{BRRW14}}]
\label{D:Z-S}
Let $U$ and $A$ be semigroups. Suppose there are two maps $A \times U \to U, (a,u)\mapsto a\cdot u$ and $A \times U \to A, (a,u)\mapsto a|_u$ such that
for all $a$, $b \in A$ and $u$, $v \in U$, we have
\begin{align*}
&(\text{B1})\ 1_A \cdot u=u; &  & (\text{B5})\ a \vert_{1_U}=a;\\
&(\text{B2})\ (ab) \cdot u=a \cdot(b \cdot u); & & (\text{B6})\ a \vert_{uv}=a \vert_u \vert_v;\\
&(\text{B3})\ a \cdot 1_U=1_U;  & & (\text{B7})\ 1_A \vert_u=1_A;\\
&(\text{B4})\ a \cdot (uv)=(a \cdot u)(a \vert_u \cdot v);  & & (\text{B8})\ (ab) \vert_u=a \vert_{b \cdot u} b\vert_u.
\end{align*}
Denote by $U \bowtie A:=U \times A$ equipped with the multiplication
\[
(u,a)(v,b):=(u (a \cdot v),a|_v\cdot b) \qforal (u,a),(v,b) \in U \times A.
\]
Then $U \bowtie A$ is a semigroup under this multiplication, which is called the \emph{(external) Zappa-Sz\'{e}p product}.

We call $a\cdot u$ the \textit{action} of $a$ on $u$, and $a|_u$ the \textit{restriction} of $a$ to $u$.
\end{defn}

Let us record the following remark for later use.

\begin{rem}
\label{R:lc}
If $U$ and $A$ in Definition \ref{D:Z-S} are both left cancellative semigroups, and if for any $a \in A$, the map $u \mapsto a \cdot u$ is an injection on $U$, then
$U \bowtie A$ is also left cancellative.
\end{rem}

One very useful way to produce Zappa-Sz\'ep products is from self-similar actions.

\begin{defn}
[{\cite[Definition~1.5.1]{Nek05}}]
Let $X$ be a non-empty finite set. Consider the free semigroup $X^*$ generated by $X$.
Suppose that a group $G$ acts faithfully on $X^*$. Then this action is called \emph{self-similar} if
\begin{enumerate}
\item $g \cdot \mt=\mt$ for all $g \in G$;
\item for $g \in \mathbb{Z}, x \in X$, there exist unique $y \in X, h \in G$ such that $g \cdot (xw)=y(h \cdot w)$ for all $w \in X^*$.
\end{enumerate}
We also call $(G, X)$ a \textit{self-similar action}.
\end{defn}

If we let $g\cdot x:=y$ and $g|_x:=h$, then these two maps induce two maps
$G\times X^*\to X^*, (g,u)\mapsto g \cdot u$ and $G\times X^*\to G, (g,u)\mapsto g \vert_u$
satisfying Conditions (B1)--(B8) of Definition \ref{D:Z-S}.
Identifying $X^*$ with $\bF_{|X|}^+$, we obtain a Zappa-Sz\'ep product $\bF_{|X|}^+\bowtie G$.

A very important example of self-similar actions (see \cite{LRRW14, Nek05}), which will be frequently used later, is given below.

\begin{eg}[\textbf{Odometers}]
\label{Eg:odometer}
Let $n\ge 1$ and $X=\{x_\fs:\fs\in[n]\}$. Define
\begin{align*}
1\cdot x_\fs&=x_{(\fs+1)\text{ mod\,} n}\quad \text{for }\fs\in[n],\\
1|_{x_\fs}&=
\begin{cases}
0\quad \text{if }\fs<n-1\\
1\quad \text{if }\fs=n-1.
\end{cases}
\end{align*}
This determines a self-similar action $(\bZ, X)$, which is known as an \textit{odometer} or an \textit{adding machine}.
\end{eg}

\section{Generators and relations of $\mathcal{Q}(\mathbb{F}_\theta^+ \bowtie G)$}
\label{S:genrel}

When applying the construction given in Definition \ref{D:Li} to the Zappa-Sz\'ep product $U\bowtie A$ of two semigroups $A$ and $U$,
usually we find it hard to understand its boundary quotient C*-algebra $\Q(U\bowtie A)$. This is not surprising due to several factors:
for instance, the constructible right ideals of $U\bowtie A$ could be very complex; its foundation sets are not easy to describe.

In this section, we study a class of Zappa-Sz\'ep products $\Fth\bowtie G$, where $G$ is a group and $\Fth$ is a single-vertex $k$-graph such that the restriction map satisfies a certain condition. In this case, $\Q(\Fth\bowtie G)$ can be nicely presented by a unitary representation of $G$ and a $*$-representation
of $\Fth$ such that they are compatible with the action and restriction maps.

\begin{lem}
\label{L:Q(UxA)}
Let $U$ be a left cancellative semigroup and $G$ be a group.
Let $G\times U\to U, (a,u)\mapsto a\cdot u$ and $G\times U\to G, (a,u)\mapsto a|_u$ be two maps
satisfying Conditions (B1)--(B8) of Definition \ref{D:Z-S}.
Suppose that for $u \in U, b \in G$, there exists $a \in G$ such that $a \vert_u=b$.
Then $\mathcal{Q}(U \bowtie G)$ is isomorphic to the universal C*-algebra $\fA$ generated by a family of isometries $\{t_u \}_{u \in U}$, a family of projections $\{q_X\}_{X \in \mathcal{J}(U)}$, and a family of unitaries $\{s_a\}_{a \in G}$ satisfying the following properties:
For $u,v \in U, X,Y \in \mathcal{J}(U), a,b \in G$,
\begin{enumerate}
\item\label{t_u t_v=t_uv} $t_u t_v=t_{uv}$;
\item\label{t_u q_X t_u*=q_{uX}} $t_u q_X t_u^*=q_{uX}$;
\item $s_a q_X s_a^*=q_{a \cdot X}$;\footnote{It will be shown that $a \cdot X \in \mathcal{J}(U)$ later.}
\item $q_\mt=0$ and $q_U=1$;
\item\label{q_X q_Y=q_X cap Y} $q_X q_Y=q_{X \cap Y}$;
\item\label{prod(1-q_X)=0} $\prod_{X \in F}(1-q_X)=0$ for every foundation set $F \subset \mathcal{J}(U)$;
\item $s_a s_b=s_{ab}$;
\item\label{s_a t_u=t_a cdot u s_a vert u} $s_a t_u=t_{a \cdot u} s_{a \vert_u}$.
\end{enumerate}
\end{lem}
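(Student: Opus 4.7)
The plan is to construct mutually inverse $*$-homomorphisms between $\fA$ and $\Q(U\bowtie G)$ via their universal properties. The key structural observation is that every element $(u,a)\in U\bowtie G$ factors as $(u,1_G)(1_U,a)$, and because $G$ is a group the element $(1_U,a)$ is invertible in $U\bowtie G$ with inverse $(1_U,a^{-1})$. This motivates setting $t_u := v_{(u,1_G)}$ and $s_a := v_{(1_U,a)}$ inside $\Q(U\bowtie G)$; then $s_a$ is automatically a unitary and relations (1), (7), (8) of the statement follow immediately from (L1). The real work is to match the projections on the two sides.

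For this I would first establish, as a structural result, the identification
\[
\J(U\bowtie G) = \{X\times G : X \in \J(U)\},
\]
together with a well-defined action $G\times \J(U)\to \J(U)$, $(a,X)\mapsto a\cdot X$. Both claims would be proved simultaneously by induction on the length of a representing word. The base case rests on $(v,b)(U\bowtie G) = vU\times G$, which holds precisely because $G$ is a group: $b|_w c$ exhausts $G$ as $c$ ranges over $G$ with $w$ fixed. For the inductive step involving $p^{-1}(\cdot)$ one obtains the formula
\[
a\cdot(p^{-1}Y) \;=\; (\tilde a\cdot p)^{-1}(\tilde a\cdot Y),
\]
where $\tilde a\in G$ is chosen with $\tilde a|_p = a$. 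Here the surjectivity hypothesis of the lemma is used essentially: via (B4), $\tilde a\cdot(pw) = (\tilde a\cdot p)(a\cdot w)$, so $pw\in Y$ is equivalent to $a\cdot w \in (\tilde a\cdot p)^{-1}(\tilde a\cdot Y)$. One then verifies
\[
(u,a)^{-1}(vU\times G) \;=\; \bigl(a^{-1}\cdot(u^{-1}vU)\bigr)\times G,
\]
closing the induction. Intersections and foundation sets transfer cleanly because $(X_1\cap X_2)\times G = (X_1\times G)\cap(X_2\times G)$, and a family of ideals $X_i\times G$ meets another $Y\times G$ if and only if the corresponding $X_i$ meets $Y$.

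With this identification in hand, in $\Q(U\bowtie G)$ define the elements $t_u,s_a$ as above and $q_X := e_{X\times G}$, verify relations (1)--(8) in turn — the only nontrivial one being (3), which uses the direct computation $(1_U,a)(X\times G)=(a\cdot X)\times G$ — and invoke the universal property of $\fA$ to obtain $\phi:\fA\to\Q(U\bowtie G)$. Conversely, set $V_{(u,a)} := t_u s_a$ and $E_{X\times G} := q_X$ in $\fA$ and verify (L1)--(L4) and (Q5). The only nonroutine point is (L2): using $(u,a)(X\times G)=u(a\cdot X)\times G$ together with relations (2) and (3) of $\fA$, one computes
\[
V_{(u,a)} E_{X\times G} V_{(u,a)}^* \;=\; t_u s_a q_X s_a^* t_u^* \;=\; t_u q_{a\cdot X} t_u^* \;=\; q_{u(a\cdot X)} \;=\; E_{(u,a)(X\times G)};
\]
relation (Q5) follows at once from the foundation-set correspondence. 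The universal property then supplies $\psi:\Q(U\bowtie G)\to\fA$, and $\phi,\psi$ are mutually inverse on generators.

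The main obstacle is really the preliminary step: establishing that $\J(U\bowtie G)$ has the promised shape and that $G$ genuinely acts on $\J(U)$. Everything after that is careful but routine bookkeeping. It is precisely in this preliminary analysis that the surjectivity hypothesis $\{\tilde a|_u:\tilde a\in G\}=G$ plays its part, since without it the formula for $a\cdot(p^{-1}Y)$ need not even land back in $\J(U)$, and the whole correspondence breaks down.
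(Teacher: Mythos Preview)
Your proposal is correct and follows essentially the same route as the paper: both arguments first establish $\J(U\bowtie G)=\{X\times G:X\in\J(U)\}$ together with the $G$-action on $\J(U)$ (using the surjectivity hypothesis via the formula $b\cdot(v^{-1}uS)=(c\cdot v)^{-1}(c\cdot(uS))$ for $c|_v=b$), and then construct mutually inverse homomorphisms via the assignments $(u,a)\mapsto t_u s_a$, $X\times G\mapsto q_X$ in one direction and $t_u\mapsto v_{(u,1_G)}$, $s_a\mapsto v_{(1_U,a)}$, $q_X\mapsto e_{X\times G}$ in the other. The verifications you outline, including the foundation-set correspondence and the unitarity of $s_a$, match the paper's computations.
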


\begin{proof}
First of all, we investigate the constructible right ideals $\mathcal{J}(U \bowtie G)$ of $U \bowtie G$. For $u,v \in U, a,b \in G, S \subset U$, one can easily verify that
\begin{align}
\nonumber
(u,a)(S \times G)&=(u(a \cdot S)) \times G;\\
\label{E:b cdot v^-1 uS}
b \cdot (v^{-1}uS)&=(c \cdot v)^{-1}(c \cdot (uS)) \text{ for } c \in A \text{ with } c|_v=b;\\
\nonumber
(v,b)^{-1}(S \times G)&=(b^{-1} \cdot (v^{-1}S)) \times G.
\end{align}
One can immediately see the following facts from \eqref{E:b cdot v^-1 uS}:
\begin{itemize}
\item $a\cdot X\in \J(U)$ for any $a\in G$ and $X\in \J(U)$;
\item $\mathcal{J}(U \bowtie G)=\{X \times G:X \in \mathcal{J}(U)\}$; and
\item for any finite subset $F \subset \mathcal{J}(U)$, $F$ is a foundation set if and only if $\{X \times G\}_{X \in F}$ is a foundation set of $\mathcal{J}(U \bowtie G)$.
\end{itemize}

Let $\{\delta_{(u,a)}\}_{(u,a) \in U \bowtie G}$ (resp.~$\{e_{X \times G}\}_{X \in \mathcal{J}(U)}$) be the families of isometries (resp.~projections) which generate $\mathcal{Q}(U \bowtie G)$.

For $(u,a) \in U \bowtie G$ and $X \in \mathcal{J}(U)$, define
\[
\Delta_{(u,a)}:=t_u s_a \text{ and } E_{X \times G}:=q_X.
\]

Given $(u,a),(v,b) \in U \bowtie G$ and $X, Y \in \mathcal{J}(U)$, we have the following properties:
\begin{itemize}
\item $\Delta_{(u,a)} \Delta_{(v,b)}=t_u s_a t_v s_b=t_{u (a \cdot v)}s_{a \vert_v b}=\Delta_{(u,a)(v,b)}$.
\item $\Delta_{(u,a)}E_{X \times G}\Delta_{(u,a)}^*=t_u s_a q_X s_a^* t_u^*=q_{u(a \cdot X)}=E_{u a \cdot X \times G}=E_{(u,a)X \times G}$.
\item $E_\mt=q_\mt=0$ and $E_{U \bowtie G}=q_U=1$.
\item $E_{X \times G} E_{Y \times G}=q_X q_Y=q_{X \cap Y}=E_{(X \times G)\cap(Y \times G)}$.
\item For a foundation set $\{X_i \times G\}_{1 \leq i \leq n}$ of $\mathcal{J}(U \bowtie G)$, since $\{X_i\}_{i=1}^{n}$ is a foundation set of $\mathcal{J}(U)$, we have
\[
\prod_{i=1}^{n}(1-E_{X_i \times G})=\prod_{i=1}^{n}(1-q_{X_i})=0.
\]
\end{itemize}
Hence Relations (L1)--(L4) and (Q5) of Definition \ref{D:Z-S} hold. By the universal property of $\mathcal{Q}(U \bowtie G)$, there exists a homomorphism $\rho:\mathcal{Q}(U \bowtie G) \to \fA$ such that $\rho(\delta_{u,a})=\Delta_{(u,a)}$ and $\rho(e_{X \times G})=E_{X \times G}$ for all $(u,a) \in U \bowtie G$ and $X \in \mathcal{J}(U)$.

Conversely, for $u \in U, X \in \mathcal{J}(U)$ and $a \in G$, define
\[
T_u:=\delta_{(u,1_G)}, \ Q_X:=e_{X \times G}, \ S_a:=\delta_{(1_U,a)}.
\]
For $a \in G$, we compute that
\[
S_a S_a^*:=\delta_{(1_U,a)} e_{U \bowtie G} \delta_{(1_U,a)}^*=e_{(1_U,a)(U \bowtie G)}=e_{U \bowtie G}=1.
\]
So $S_a$ is unitary.

Given $u,v \in U, X,Y \in \mathcal{J}(U)$ and $a,b \in G$, the following properties hold:
\begin{itemize}
\item $T_u T_v=\delta_{(u,1_G)}\delta_{(v,1_G)}=\delta_{(uv,1_G)}=T_{uv}$.
\item $T_u Q_X T_u^*=\delta_{(u,1_G)}e_{X \times G} \delta_{(u,1_G)}^*=e_{(u,1_G)(X \times G)}=e_{uX \times G}=Q_{uX}$.
\item $S_a Q_X S_a^*=\delta_{(1_U,a)} e_{X \times G} \delta_{(1_U,a)}^*=e_{(1_U,a)(X \times G)}=e_{(a \cdot X) \times G}=Q_{a \cdot X}$.
\item $Q_\mt=e_\mt=0$ and $Q_U=e_{U \times G}=1$.
\item $Q_X Q_Y=e_{X \times G}e_{Y \times G}=e_{(X \cap Y) \times G}=Q_{X \cap Y}$.
\item For any foundation set $F$ of $\mathcal{J}(U)$, since $\{X \times G\}_{X \in F}$ is a foundation set of $\mathcal{J}(U \bowtie G)$, we have
$
\prod_{X \in F}(1-Q_X)=\prod_{X \in F}(1-e_{X \times G})=0.
$
\item $S_a S_b=\delta_{(1_U,a)}\delta_{(1_U,b)}=\delta_{(1_U,ab)}=S_{ab}$.
\item $S_a T_u=\delta_{(1_U,a)}\delta_{(u,1_G)}=\delta_{(a \cdot u,1_G)}\delta_{(1_U,a \vert_u)}=T_{a \cdot u} S_{a \vert_u}$.
\end{itemize}
So Conditions~(\ref{t_u t_v=t_uv})--(\ref{s_a t_u=t_a cdot u s_a vert u}) hold. By the universal property of $\fA$, there exists a homomorphism $\pi:\fA \to \mathcal{Q}(U \bowtie G)$ such that $\pi(t_u)=T_u,\pi(q_X)=Q_X, \pi(s_a)=S_a$ for all $u \in U, X\in \mathcal{J}(U), a\in G$.

Finally, it is straightforward to see that $\pi \circ \rho=\id$ and $\rho \circ \pi=\id$. Therefore $\mathcal{Q}(U \bowtie G)$ is isomorphic to $\fA$.
\end{proof}

\begin{lem}\label{constructible right ideals of F_theta}
Let $\Fth$ be a single-vertex $k$-graph, and $T$ be a $*$-representation of $\Fth$ in a C*-algebra $\mathcal{A}$. Given $\mu_1,\nu_1,\dots,\mu_n,\nu_n$ in  $\Fth$, denote by
\[
F:=\left\{(\alpha_i,\beta_i)_{i=1}^{n} \in \prod_{j=1}^{2n}\Fth:(\alpha_i,\beta_i) \in \begin{cases}
     (\Fth)^{\min}(\mu_i,\nu_i \alpha_{i-1}) &\text{if $1 < i \leq n$} \\
    (\Fth)^{\min}(\mu_i,\nu_i) &\text{if $i=1$}
\end{cases}\right\},
\]
where $(\Fth)^{\min}(\mu,\nu)$ denotes the set of minimal common extensions of $\mu$ and $\nu$. Then the following statement hold true. 
\begin{enumerate}
\item\label{alpha_n neq gamma_n beta_n neq omega_n} For distinct tuples $(\alpha_i,\beta_i)_{i=1}^{n}$ and $(\gamma_{i},\omega_{i})_{i=1}^{n} \in F$, we have $d(\alpha_n)=d(\gamma_n)$, $d(\beta_n)=d(\omega_n)$, $\alpha_n \neq \gamma_n$, and $\beta_n \neq \omega_n$.
\item\label{ideal structure of Lambda} $\mu_n^{-1}\nu_n \cdots \mu_1^{-1} \nu_1 \Fth=\bigcup_{(\alpha_i,\beta_i)_{i=1}^{n} \in F} \alpha_n \Fth$.
\item\label{unique presentation of union} Each constructible right ideal of $\bF_\theta^+$ has a unique representation as the union of disjoint principal right ideals of $\bF_\theta^+$.
\item\label{J(F_theta^+)} $\mathcal{J}(\bF_\theta^+)=\Big\{\bigcup_{i=1}^{n}\alpha_i \bF_\theta^+:d(\alpha_1)=\cdots =d(\alpha_n)\Big\}$.
\item\label{foundation=exhaustive} For any finite subset $F \subset \bF_\theta^+$, we have $\{\alpha \bF_\theta^+\}_{\alpha \in F}$ is a foundation set of $\mathcal{J}(\bF_\theta^+)$ if and only if $F$ is exhaustive (see \cite[Definition~2.4]{RSY04}).
\end{enumerate}
\end{lem}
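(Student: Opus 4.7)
The plan is to prove the five assertions in order, with the factorization property of $\Fth$ as the common workhorse: every $w$ with $d(w) = p + q$ factors uniquely as $w = uv$ with $d(u) = p$ and $d(v) = q$, and $\Fth$ has both left and right cancellation. Left cancellation together with unique factorization is what keeps the minimal-common-extension calculus honest.

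For (1), I would induct on $n$. The base case ($n = 1$) says that both coordinate projections from $(\Fth)^{\min}(\mu_1,\nu_1)$ are injective: all pairs $(\alpha,\beta)$ have fixed degrees $d(\alpha) = (d(\mu_1) \vee d(\nu_1)) - d(\mu_1)$ and $d(\beta) = (d(\mu_1) \vee d(\nu_1)) - d(\nu_1)$, and any coincidence in $\alpha$ (resp.~$\beta$) forces one in $\beta$ (resp.~$\alpha$) by left-cancelling $\mu_1$ (resp.~$\nu_1$) in $\mu_1\alpha = \nu_1\beta$. For the inductive step, I would split by whether two distinct length-$n$ tuples already differ on their length-$(n-1)$ truncations. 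If they agree there, then $(\alpha_n,\beta_n)$ and $(\gamma_n,\omega_n)$ lie in the same set $(\Fth)^{\min}(\mu_n,\nu_n\alpha_{n-1})$ and the base case finishes. Otherwise, the inductive hypothesis supplies $\alpha_{n-1} \neq \gamma_{n-1}$ with matching degrees; assuming $\alpha_n = \gamma_n$ and left-cancelling $\mu_n$ from $\mu_n\alpha_n = \nu_n\alpha_{n-1}\beta_n$ and $\mu_n\gamma_n = \nu_n\gamma_{n-1}\omega_n$ yields $\alpha_{n-1}\beta_n = \gamma_{n-1}\omega_n$, and uniqueness of factorization at the appropriate degree forces $\alpha_{n-1} = \gamma_{n-1}$, a contradiction. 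The argument for $\beta_n \neq \omega_n$ is symmetric.

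For (2), I again induct on $n$. The case $n = 1$ is the definition of minimal common extension: $w \in \mu_1^{-1}\nu_1\Fth$ iff $\mu_1 w \in \nu_1\Fth$, iff $w = \alpha w'$ for some $(\alpha,\beta) \in (\Fth)^{\min}(\mu_1,\nu_1)$. The inductive step rewrites the ideal as $\mu_n^{-1}\nu_n\bigl(\mu_{n-1}^{-1}\nu_{n-1}\cdots\mu_1^{-1}\nu_1\Fth\bigr)$, applies the hypothesis to the inner ideal, and then the $n=1$ case to each $\mu_n^{-1}\nu_n\alpha_{n-1}\Fth$. Part (3) is then a consequence: (2) presents each constructible right ideal as a common-degree union, disjointness is automatic because two paths of the same degree either coincide or generate disjoint principal right ideals, uniqueness at a fixed common degree is tautological, and any two presentations are reconciled by the standard refinement $\alpha\Fth = \bigsqcup_{\fs} \alpha x^i_\fs\,\Fth$.

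For (4), the forward inclusion is (2) together with (3). For the reverse inclusion, I would build any common-degree union $\bigcup_{i=1}^n \alpha_i\Fth$ as an iterated word $p_n^{-1}q_n\cdots p_1^{-1}q_1\Fth$ by induction on $n$, reading (2) backwards: the single $n=1$ principal ideal is $1^{-1}\alpha\Fth$, and larger common-degree unions are assembled by choosing $\mu_j,\nu_j$ whose minimal-common-extension sets realize the desired extra generators. Finally (5) follows from (4) by translating the foundation condition: since every $Y \in \J(\Fth)$ decomposes via (4) into common-degree principal pieces, a family $\{\alpha\Fth\}_{\alpha \in F}$ is a foundation set iff every principal ideal $\mu\Fth$ meets some $\alpha\Fth$ with $\alpha \in F$, and the $n=1$ case of (2) rewrites $\alpha\Fth \cap \mu\Fth \neq \mt$ as $(\Fth)^{\min}(\alpha,\mu) \neq \mt$, which is exhaustiveness. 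The main obstacle will be the inductive bookkeeping in (1), where the chained minimal-extension sets make it delicate to track how distinctness at one index propagates forward; once (1) is cleanly in place, the remaining parts are straightforward applications of the factorization property and the calculus of constructible right ideals.
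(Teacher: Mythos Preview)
Your arguments for (1), (2), (3) and (5) are correct and follow the paper's approach, just carried out in more detail; the paper invokes unique factorization for (1), gives the same induction for (2), and then declares (3)--(5) to ``follow from (2) easily''.

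There is, however, a genuine gap in your plan for the reverse inclusion in (4). The assertion that an arbitrary common-degree union $\bigcup_{i=1}^n \alpha_i\Fth$ can be realized by ``choosing $\mu_j,\nu_j$ whose minimal-common-extension sets realize the desired extra generators'' cannot work: there is no mechanism in a general $\Fth$ for producing prescribed minimal common extensions. Concretely, take $k=1$ and $\Fth=\Fn$, the free monoid on $n\ge 2$ letters, viewed as a single-vertex $1$-graph. Here every pair $\mu,\nu$ has at most one minimal common extension, so part (2) forces every constructible right ideal to be principal or empty; yet for distinct generators $a,b$ the set $a\Fn\cup b\Fn$ is a common-degree union that is \emph{not} constructible. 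Thus the equality in (4) fails as stated, and only the inclusion $\J(\Fth)\subseteq\bigl\{\bigcup_i\alpha_i\Fth:d(\alpha_i)\text{ constant}\bigr\}$ survives. This forward inclusion --- which you correctly obtain from (2) --- is all that is actually used later: the proof of Theorem~\ref{T:kQ(LambdaxG)} only needs that every $X\in\J(\Fth)$ admits such a decomposition, and your argument for (5) likewise invokes only this direction. The same $\Fn$ example also shows why the qualifier ``at a fixed common degree'' you supply in (3) is essential, since $\Fn$ itself has distinct disjoint decompositions into principal right ideals at different degrees.
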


\begin{proof}
(\ref{alpha_n neq gamma_n beta_n neq omega_n}) follows from the unique factorization property of $\Fth$.

We verify (\ref{ideal structure of Lambda}) by induction. It is straightforward to see that (\ref{ideal structure of Lambda}) holds for $n=1$. Suppose that (\ref{ideal structure of Lambda}) holds for $n \geq 1$.
Let
\[
F':=\left\{(\alpha_i,\beta_i)_{i=1}^{n+1} \in \prod_{j=1}^{2(n+1)}\Fth:(\alpha_i,\beta_i) \in \begin{cases}
     (\Fth)^{\min}(\mu_i,\nu_i \alpha_{i-1}) &\text{if $i>1$} \\
     (\Fth)^{\min}(\mu_i,\nu_i) &\text{if $i=1$}
\end{cases}\right\}.
\]
Then
\begin{align*}
&\mu_{n+1}^{-1}\nu_{n+1}\mu_n^{-1}\nu_n \cdots \mu_1^{-1} \nu_1 \Fth
\\&=\mu_{n+1}^{-1}\nu_{n+1}\Big(\bigcup_{(\alpha_i,\beta_i)_{i=1}^{n} \in F} \alpha_n \Fth \Big)
\\&=\bigcup_{(\alpha_i,\beta_i)_{i=1}^{n} \in F}\mu_{n+1}^{-1}\nu_{n+1}(\alpha_n \Fth)
\\&=\bigcup_{(\alpha_i,\beta_i)_{i=1}^{n} \in F}\mu_{n+1}^{-1}\nu_{n+1}\alpha_n \Fth
\\&=\bigcup_{(\alpha_i,\beta_i)_{i=1}^{n} \in F}\bigcup_{(\alpha_{n+1},\beta_{n+1}) \in (\Fth)^{\min}(\mu_{n+1},\nu_{n+1}\alpha_{n})}\alpha_{n+1}\Fth
\\&=\bigcup_{(\alpha_i,\beta_i)_{i=1}^{n+1} \in F'}\alpha_{n+1}\Fth.
\end{align*}
So this proves (\ref{ideal structure of Lambda}). (\ref{unique presentation of union})--\eqref{foundation=exhaustive} follow from (\ref{ideal structure of Lambda}) easily.
\end{proof}



\begin{thm}
\label{T:kQ(LambdaxG)}
Let $\Fth$ be a single-vertex $k$-graph,  $G$ be a group, and
let $G\times \Fth\to \Fth, (g,\mu)\mapsto g\cdot \mu$ and $G\times \Fth\to G, (g,\mu)\mapsto g|_\mu$ be two maps
satisfying Conditions (B1)--(B8) of Definition
\ref{D:Z-S}. Suppose that for $\mu \in \Fth, h \in G$, there exists $g \in G$ such that $g \vert_\mu=h$. Then $\mathcal{Q}(\Fth \bowtie G)$ is isomorphic to the universal C*-algebra $\mathcal{A}$ generated by a unitary representation $u$ of $G$ and a $*$-representation $v$ of $\Fth$ satisfying
\begin{align}
\label{E:ST}
u_g v_\mu=v_{g \cdot \mu} u_{g \vert_\mu}\qforal \mu \in \Fth \text{ and } g\in G.
\end{align}
\end{thm}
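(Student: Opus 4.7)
The plan is to realize this as a direct upgrade of Lemma~\ref{L:Q(UxA)}: that lemma already gives a presentation of $\Q(\Fth\bowtie G)$ in terms of isometries $t_\mu$, projections $q_X$ indexed by $X\in\J(\Fth)$, and unitaries $s_g$; here we want to eliminate the $q_X$'s entirely, replacing them by the projections $v_\alpha v_\alpha^*$ coming from a $*$-representation. Call $\fA$ the universal C*-algebra of Lemma~\ref{L:Q(UxA)}. It suffices to construct mutually inverse homomorphisms $\pi\colon \fA\to \mathcal{A}$ and $\rho\colon \mathcal{A}\to\fA$.

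For $\rho$, I would appeal to the universal property of $\mathcal{A}$. Set $\rho(v_\mu):=t_\mu$ and $\rho(u_g):=s_g$. The only non-routine check is that $\{t_\mu\}$ forms a $*$-representation of $\Fth$, i.e.\ satisfies the defect-free relation $\sum_{\fs\in[m_i]}t_{x_\fs^i}t_{x_\fs^i}^*=1$. Applying relation~(\ref{t_u q_X t_u*=q_{uX}}) to $X=\Fth$ gives $q_{x_\fs^i\Fth}=t_{x_\fs^i}t_{x_\fs^i}^*$, and these projections are pairwise orthogonal because $x_\fs^i\Fth\cap x_\ft^i\Fth=\emptyset$ for $\fs\neq\ft$. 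By Lemma~\ref{constructible right ideals of F_theta}(\ref{foundation=exhaustive}), the set $\{x_\fs^i\Fth\}_{\fs\in[m_i]}$ is a foundation set, since each $\mu\in\Fth$ either has $d(\mu)_i=0$ (so $\mu x_\fs^i$ is a common extension) or admits a factorization $\mu=x_\fs^i\mu'$. Relation~(\ref{prod(1-q_X)=0}) then yields the defect-free identity. The unitarity of $u_g$ and relation~\eqref{E:ST} fall out of items~(7) and~(\ref{s_a t_u=t_a cdot u s_a vert u}) of Lemma~\ref{L:Q(UxA)}.

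For $\pi$, I use Lemma~\ref{constructible right ideals of F_theta}(\ref{unique presentation of union})--(\ref{J(F_theta^+)}): every $X\in\J(\Fth)$ has a unique expression $X=\bigsqcup_{i=1}^n\alpha_i\Fth$ with $d(\alpha_1)=\cdots=d(\alpha_n)$. Define $\pi(t_\mu):=v_\mu$, $\pi(s_g):=u_g$ and
\[
\pi(q_X):=\sum_{i=1}^n v_{\alpha_i}v_{\alpha_i}^*.
\]
The uniqueness of the decomposition makes this well-defined. I then verify the eight relations of Lemma~\ref{L:Q(UxA)}. Relations~(1), (7), (8) are immediate from the hypotheses on $v$ and $u$. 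For~(2), $t_\mu q_X t_\mu^*$ translates into $\sum_i v_{\mu\alpha_i}v_{\mu\alpha_i}^*=q_{\mu X}$. For~(3), the identity \eqref{E:ST} gives $u_g v_{\alpha_i}v_{\alpha_i}^* u_g^*=v_{g\cdot\alpha_i}u_{g|_{\alpha_i}}u_{g|_{\alpha_i}}^* v_{g\cdot\alpha_i}^*=v_{g\cdot\alpha_i}v_{g\cdot\alpha_i}^*$, and because $g$ is invertible the map $\alpha\mapsto g\cdot\alpha$ is a bijection of $\Fth$ with $g\cdot(\alpha\Fth)=(g\cdot\alpha)\Fth$; hence $g\cdot X=\bigsqcup_i (g\cdot\alpha_i)\Fth$ is again in $\J(\Fth)$ and~(3) follows. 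Relation~(5) reduces to computing $v_{\alpha_i}^*v_{\beta_j}$ via the $\Fth$-factorization property, which again uses the $*$-representation axioms and Lemma~\ref{constructible right ideals of F_theta}(\ref{ideal structure of Lambda}) to identify $X\cap Y$ with the union $\bigsqcup_{(i,j)}\bigsqcup_{(\gamma,\delta)\in(\Fth)^{\min}(\alpha_i,\beta_j)}\alpha_i\gamma\Fth$.

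The main obstacle, I expect, is relation~(\ref{prod(1-q_X)=0}): given a foundation set $F\subset\J(\Fth)$, show that $\prod_{X\in F}(1-\pi(q_X))=0$. Expanding each $X$ as a disjoint union of cylinders $\alpha_i\Fth$, this amounts to showing that the family $\{\alpha\Fth : \alpha\in E\}$ obtained by refining $F$ is again a foundation set, and then deducing the identity directly from the defect-free relation of the $*$-representation $v$. Concretely, one picks a single coordinate $i$ in which all $\alpha\in E$ have nonzero $d$-component, repeatedly applies $\sum_\fs v_{x_\fs^i}v_{x_\fs^i}^*=1$ to grow all of the $\alpha$'s to a common degree, and then uses Lemma~\ref{constructible right ideals of F_theta}(\ref{foundation=exhaustive}) to see that the resulting equal-degree set exhausts every cylinder of that degree, giving $\sum_{\alpha\in E}v_\alpha v_\alpha^*=1$. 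Once $\pi$ and $\rho$ are in hand, $\pi\circ\rho=\id_{\mathcal{A}}$ and $\rho\circ\pi=\id_\fA$ are verified on the generators, completing the proof.
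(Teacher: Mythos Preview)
Your overall architecture---build $\pi\colon\fA\to\A$ and $\rho\colon\A\to\fA$ from Lemma~\ref{L:Q(UxA)} and check they are mutually inverse---is exactly the paper's, and your treatment of $\rho$ and of relations (1)--(5), (7), (8) for $\pi$ is fine. There are, however, two genuine gaps.

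\textbf{Relation~(\ref{prod(1-q_X)=0}).} Your sketch for this step does not work as written. There is in general no single coordinate $i$ with $d(\alpha)_i\neq 0$ for every $\alpha$ in the refined set $E$: different $X$'s in the foundation set $F$ decompose with different degrees, so $E$ may contain, say, an $x^1$-edge and an $x^2$-edge simultaneously. Moreover, the target identity is not $\sum_{\alpha\in E}v_\alpha v_\alpha^*=1$; the projections $v_\alpha v_\alpha^*$ for $\alpha$'s of different degrees need not be orthogonal, and after ``growing'' you would in general get overlaps. What you actually need is $\prod_{\alpha\in E}(1-v_\alpha v_\alpha^*)=0$. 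The clean argument (this is \cite[Proposition~B.1]{RSY04}, which the paper simply cites) is: set $N=\bigvee_{\alpha\in E}d(\alpha)$ and use the defect-free relations to write $1=\sum_{\mu\in(\Fth)^N}v_\mu v_\mu^*$. Since $E$ is exhaustive (Lemma~\ref{constructible right ideals of F_theta}(\ref{foundation=exhaustive})), each $\mu\in(\Fth)^N$ has a common extension with some $\alpha\in E$; as $d(\alpha)\le N$, unique factorization forces $\alpha=\mu(0,d(\alpha))$, whence $(1-v_\alpha v_\alpha^*)v_\mu=0$. Multiplying $\prod_{\alpha}(1-v_\alpha v_\alpha^*)$ against the above decomposition of $1$ kills every term.

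\textbf{The check $\rho\circ\pi=\id_\fA$.} This is not automatic on the $q_X$'s. One has $\rho\circ\pi(q_X)=\sum_i t_{\alpha_i}t_{\alpha_i}^*=\sum_i q_{\alpha_i\Fth}$, and you must still prove $\sum_i q_{\alpha_i\Fth}=q_X$ \emph{inside} $\fA$, using only relations (1)--(8). The paper does this by a short foundation-set trick: with $F:=(\Fth)^{d(\alpha_1)}\setminus\{\alpha_i\}$, both $\{\alpha_i\Fth,\alpha\Fth:\alpha\in F\}$ and $\{X,\alpha\Fth:\alpha\in F\}$ are foundation sets with pairwise disjoint members, so relation~(\ref{prod(1-q_X)=0}) gives $1=\sum_i q_{\alpha_i\Fth}+\sum_{\alpha\in F}q_{\alpha\Fth}$ and $1=q_X+\sum_{\alpha\in F}q_{\alpha\Fth}$; subtracting yields the claim. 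Your last sentence sweeps this under the rug.
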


\begin{proof}
We apply the characterization of $\mathcal{Q}(\Fth \bowtie G)$ from Lemma~\ref{L:Q(UxA)}. That is, $\mathcal{Q}(\Fth \bowtie G)$ is the universal C*-algebra generated by a family of isometries $\{t_\mu\}_{\mu \in \bF_\theta^+}$, a family of projections $\{q_X\}_{X \in \mathcal{J}(\bF_\theta^+)}$, and a family of unitaries $\{s_a\}_{a \in G}$ satisfying Conditions~(\ref{t_u t_v=t_uv})--(\ref{s_a t_u=t_a cdot u s_a vert u}) of Lemma~\ref{L:Q(UxA)}.

First of all, for $\mu \in \bF_\theta^+$, $g\in G$ and $X=\bigcup_{i=1}^{n}\alpha_i \bF_\theta^+ \in \mathcal{J}(\bF_\theta^+)$, define 
$
T_\mu:=v_\mu,  S_g:=u_g, Q_X:=\sum_{i=1}^{n}v_{\alpha_i} v_{\alpha_i}^*.
$
It is clear that $T_\mu$ and $S_g$ are isometric and unitary respectively. Also notice that $Q_X$ is a well-defined projection due to Lemma~\ref{constructible right ideals of F_theta}. In what follows, we only verify that $\{T_\mu,S_g,Q_X:\mu\in\Fth, g\in G, X\in\J(\Fth)\}$ satisfies Conditions~(\ref{q_X q_Y=q_X cap Y}) and (\ref{prod(1-q_X)=0}) of Lemma~\ref{L:Q(UxA)}, as the other conditions hold easily.

To prove Condition~(\ref{q_X q_Y=q_X cap Y}) of Lemma~\ref{L:Q(UxA)}, let us fix $X=\bigcup_{i=1}^{n}\alpha_i \bF_\theta^+$ and $Y=\bigcup_{j=1}^{m}\beta_j \bF_\theta^+ $ in 
$\mathcal{J}(\bF_\theta^+)$. Then $X \cap Y=\bigcup_{i,j}\bigcup_{(\mu,\nu) \in \Lambda^{\min}(\alpha_i,\beta_j)}\alpha_i \mu \bF_\theta^+$. So
\begin{align*}
Q_X Q_Y&=\sum_{i,j}v_{\alpha_i} v_{\alpha_i}^*v_{\beta_j} v_{\beta_j}^*
\\&=\sum_{i,j}\sum_{(\mu,\nu) \in (\Fth)^{\min}(\alpha_i,\beta_j)}v_{\alpha_i \mu}v_{\beta_j \nu}^* \text{ (see \cite[Lemma~3.1]{KP00})}
\\&=Q_{X \cap Y}.
\end{align*}

For the proof of Condition~(\ref{prod(1-q_X)=0}) of Lemma~\ref{L:Q(UxA)}, pick a foundation set $\{X_i:=\bigcup_{j=1}^{m_i}\alpha_{ij} \bF_\theta^+\}_{i=1}^{n}$ of $\mathcal{J}(\bF_\theta^+)$. Notice that $\{\alpha_{ij} \bF_\theta^+:1 \leq i \leq n, 1 \leq j \leq m_i\}$ is also a foundation set of $\mathcal{J}(\bF_\theta^+)$. By Lemma~\ref{constructible right ideals of F_theta}, $\{\alpha_{ij}:1 \leq i \leq n, 1 \leq j \leq m_i\}$ is exhaustive. Then it follows from \cite[Proposition B.1]{RSY04}) that
\begin{align*}
\prod_{i=1}^{n}(1-Q_{X_i})&=\prod_{i=1}^{n} \prod_{j=1}^{m_i}(1-v_{\alpha_{ij}} v_{\alpha_{ij}}^*)=0 . 
\end{align*}

By the universal property of $\mathcal{Q}(\Fth \bowtie G)$, there exists a homomorphism $\pi:\mathcal{Q}(\Fth \bowtie G) \to \mathcal{A}$ such that $\pi(t_\mu)=T_{\mu}, \pi(s_g)=S_g,\pi(q_X)=Q_X$ for all $\mu \in \bF_\theta^+, g \in G, X \in \mathcal{J}(\bF_\theta^+)$.

Conversely, let 
\[
V_\mu:=t_\mu, \ U_g:=s_g \qforal \mu\in\Fth, \ g\in G.
\]
Clearly, $V_\mu$ is an isometry and $U_g$ is a unitary. 
We verify that $V$ is a $*$-representation of $\Fth$. 
Obviously, we only need to show that $\sum_{\mu \in (\bF_\theta^+)^{e_i}}V_\mu V_\mu^*=1$ for all $1 \leq i \leq k$. To this end, let $1 \leq i \leq k$. For distinct $\mu,\nu \in (\bF_\theta^+)^{e_i}$, by Condition~(\ref{t_u q_X t_u*=q_{uX}}) of Lemma~\ref{L:Q(UxA)}, we get $t_\mu t_\mu^*=q_{\mu \bF_\theta^+}$ and $t_\nu t_\nu^*=q_{\nu \bF_\theta^+}$. By Condition~(\ref{q_X q_Y=q_X cap Y}) of Lemma~\ref{L:Q(UxA)}, we have $t_\mu t_\mu^*t_\nu t_\nu^*=0$. Since $\{\mu F_\theta^+\}_{\mu \in (\bF_\theta^+)^{e_i}}$ is a foundation set of $\mathcal{J}(\bF_\theta^+)$, we have
\begin{align*}
1-\sum_{\mu \in (\bF_\theta^+)^{e_i}}V_\mu V_\mu^*&=1- \sum_{\mu \in (\bF_\theta^+)^{e_i}}t_\mu t_\mu^*
\\&=\prod_{\mu \in (\bF_\theta^+)^{e_i}}(1-t_\mu t_\mu^*)
\\&=\prod_{\mu \in (\bF_\theta^+)^{e_i}}(1-q_{\mu \bF_\theta^+}) \text{ (by Lemma~\ref{L:Q(UxA)} (\ref{prod(1-q_X)=0}))}
\\&=0.
\end{align*}
Thus, by the universal property of $\mathcal{A}$, there exists a homomorphism $\rho: \mathcal{A} \to \mathcal{Q}(\Fth \bowtie G)$ such that $\rho(v_\mu)=V_\mu$, $\rho(u_g)=U_g$
for all $\mu \in F_\theta^+, g \in G$. 

It remains to show that $\pi$ and $\rho$ are inverse of each other. For this, let $X:=\bigcup_{i=1}^{n}\alpha_i \bF_\theta^+ \in \mathcal{J}(\bF_\theta^+)$. 
Denote by $F:=(\bF_\theta^+)^{d(\alpha_1)} \setminus \{\alpha_i\}_{i=1}^{n}$. Then 
$\{\alpha_i \bF_\theta^+,\alpha \bF_\theta^+:1 \leq i \leq n, \alpha \in F\}$ and 
$\{X,\alpha \bF_\theta^+:\alpha \in F\}$ 
are foundation sets of $\mathcal{J}(\bF_\theta^+)$. By Conditions~(\ref{q_X q_Y=q_X cap Y})--(\ref{prod(1-q_X)=0}) of Lemma~\ref{L:Q(UxA)}, we have
\begin{align*}
\prod_{i=1}^{n}\big(1-q_{\alpha_i \bF_\theta^+}\big)\prod_{\alpha \in F}\big(1-q_{\alpha \bF_\theta^+}\big)
&=1-\sum_{i=1}^{n}q_{\alpha_i \bF_\theta^+}-\sum_{\alpha \in F}q_{\alpha \bF_\theta^+}=0,\\
\big(1-q_X\big)\prod_{\alpha \in F}\big(1-q_{\alpha \bF_\theta^+}\big)
&=1-q_X-\sum_{\alpha \in F}q_{\alpha \bF_\theta^+}=0.
\end{align*}
So $\rho \circ \pi (q_X)=q_X$. Then it is easy to see that $\rho \circ \pi=\id, \pi \circ \rho=\id$. Therefore we are done.
\end{proof}

Two remarks are recorded below.

\begin{rem}
Theorem \ref{T:kQ(LambdaxG)} is an analogue  of \cite[Theorem 5.2]{BRRW14}. However, since a single-vertex $k$-graph $\Fth$
is not necessarily right LCM in general (also see Proposition \ref{P:iffLCM} below), the assumptions of \cite[Theorem~5.2]{BRRW14} are not satisfied in our case.
So here one can not apply \cite[Theorem~5.2]{BRRW14}.
\end{rem}

\begin{rem}
If $G$ is trivial, Theorem \ref{T:kQ(LambdaxG)} implies that the boundary quotient C*-algebra $\mathcal{Q}(\Fth)$ is isomorphic to the graph C*-algebra $\O_\theta$ of $\Fth$.
We should also mention that the C*-algebra $\ca(\Fth)$ in \cite{DSY08, DSY10}  is really $\Q(\Fth)$ here,
instead of the full C*-algebra of (the semigroup) $\Fth$.
(To avoid confusion, the notation $\O_\theta$ was first used in \cite{Yan10}.)
\end{rem}

\section{An application to the standard products of Odometers}
\label{S:pssa}

Applying the main result in Section \ref{S:genrel} to the standard product of $k$ odometers, we further simplify the presentation of
its boundary quotient C*-algebra. Our result, loosely speaking, says that the boundary quotient C*-algebra in this case
is generated by a unitary representation of a group and a $*$-representation of a single-vertex $k$-graph which are compatible with
the odometer actions.

For our purpose, we first generalize \cite[Proposition 3.10]{BRRW14} to higher dimensional cases.

\begin{prop}[\textbf{and Definition}]
\label{P:kZ-S}
Let $G$ be a group and
\[
\Fth=\big\langle x_\fs^i: \fs\in [n_i],\, 1\le i\le k;\,  x^i_\fs x^j_\ft = x^j_{\ft'} x^i_{\fs'}
 \text{ whenever }
 \theta_{ij}(\fs,\ft) = (\ft',\fs') \big\rangle
\]
be a single-vertex $k$-graph.
Suppose that $G$ acts self-similarly on each $\mathbb{F}_{n_i}^+$ $(1\le i\le k)$.
Then the action and restriction maps $G\times X_i\to X_i, (g,x_\fs^i)\mapsto g\cdot x^i_\fs$ and $G\times X_i\to G, (g,x_\fs^i)\mapsto g|_{x^i_\fs}$
can be extended to $G\times \Fth\to \Fth, (g,\mu)\mapsto g\cdot \mu$ and $G\times \Fth\to G, (g,\mu)\mapsto g|_{\mu}$
satisfying Condition (B1)--(B8) in Definition \ref{D:Z-S}, if and only if
\begin{align}
\label{E:welldef}
(g \cdot x_\fs^i) (g \vert_{x_\fs^i} \cdot x_\ft^j)&=(g \cdot x_{\ft'}^j)(g \vert_{x_{\ft'}^j} \cdot x_{\fs'}^i)
\end{align}
for all generators $g$ of $G$ and $\theta_{ij}( x^i_\fs, x^j_\ft )= (x^j_{\ft'}, x^i_{\fs'})$ $(1\le i<j\le k)$.

The induced Zappa-Sz\'ep product $\Fth\bowtie G$ is called the {\rm product of self-similar actions} $\{(G, [n_i])\}_{i=1}^k$.
\end{prop}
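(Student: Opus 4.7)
The forward direction is immediate: given an extension to $\Fth$ satisfying (B1)--(B8), applying (B4) to both sides of the defining identity $x^i_\fs x^j_\ft = x^j_{\ft'} x^i_{\fs'}$ in $\Fth$ yields (\ref{E:welldef}) for every $g \in G$, in particular for generators.

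For the converse, assume (\ref{E:welldef}) holds for all generators of $G$. First I would promote (\ref{E:welldef}) to every $g \in G$ by induction on group word length, using only the self-similar axioms (B2) and (B8) on each $\mathbb{F}_{n_i}^+$: for a product $gh$, expand both sides of (\ref{E:welldef}) for $gh$ via (B2), (B8) on $\mathbb{F}_{n_i}^+$ and $\mathbb{F}_{n_j}^+$ and reduce to (\ref{E:welldef}) applied to $g$ and to $h$ separately; inverses are handled analogously using $g|_{g^{-1}\cdot x^i_\fs}\, g^{-1}|_{x^i_\fs} = 1_G$ together with the unique factorization in $\Fth$.

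Next I would define the extension through the unique normal form $\mu = x^1_{u_1}\cdots x^k_{u_k}$ on $\Fth$ by
\[
g\cdot\mu := \prod_{i=1}^{k} g_i\cdot x^i_{u_i}, \qquad g|_\mu := g_{k+1},
\]
where $g_1:=g$ and $g_{i+1}:=g_i|_{x^i_{u_i}}$ are computed via the self-similar action on $\mathbb{F}_{n_i}^+$. Axioms (B1), (B3), (B5), (B7) are then built into the construction.

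The main obstacle is to verify (B4) (and therefore (B2), (B6), (B8)): rewriting $\mu\nu$ into normal form requires iterated $\theta$-commutations $x^i_\fs x^j_\ft = x^j_{\ft'} x^i_{\fs'}$, and both the action and the restriction must be preserved at each step. The action preservation is precisely (\ref{E:welldef}); the restriction preservation reduces to the key identity
\[
(g|_{x^i_\fs})|_{x^j_\ft} = (g|_{x^j_{\ft'}})|_{x^i_{\fs'}} \quad\text{in } G,
\]
which I would derive by applying (\ref{E:welldef}) to the three-letter word $x^i_\fs x^j_\ft x^\ell_{\mathfrak{r}}$: expanding the $g$-action in two ways via (B4) at the letter level, cancelling the common two-letter prefix guaranteed by (\ref{E:welldef}) using left cancellativity of the $k$-graph $\Fth$, one concludes that both sides act identically on every $x^\ell_{\mathfrak{r}} \in \mathbb{F}_{n_\ell}^+$; faithfulness of the self-similar action of $G$ on $\mathbb{F}_{n_\ell}^+$ then forces equality in $G$. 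With this restriction compatibility in hand, (B2), (B4), (B6), (B8) on $\Fth$ follow by a routine induction on word length.
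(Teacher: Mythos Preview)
Your overall shape matches the paper's proof: define the extension through the normal form, promote \eqref{E:welldef} from generators to all of $G$ via (B2) and (B8) on each factor, and then verify (B4) by reducing to adjacent $\theta$-swaps. You are in fact more explicit than the paper in isolating the restriction identity $(g|_{x^i_\fs})|_{x^j_\ft}=(g|_{x^j_{\ft'}})|_{x^i_{\fs'}}$ as the crux of the inductive step; the paper simply says ``similarly for the others''.

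The gap is in your derivation of that restriction identity. You propose to expand $g\cdot(x^i_\fs x^j_\ft x^\ell_{\mathfrak r})$ ``in two ways via (B4) at the letter level'', one along $x^i_\fs x^j_\ft x^\ell_{\mathfrak r}$ and one along $x^j_{\ft'} x^i_{\fs'} x^\ell_{\mathfrak r}$, and then cancel the common two-letter prefix. But at this stage only the first expansion is the \emph{definition} of $g\cdot\mu$ (the word is already in normal form for $i<j<\ell$); the second expansion is exactly what (B4) would give, and (B4) is what you are trying to prove. Without (B4) there is no reason the two expressions agree in $\Fth$, so there is nothing to cancel. Moreover, even if you could conclude that $(g|_{x^i_\fs})|_{x^j_\ft}$ and $(g|_{x^j_{\ft'}})|_{x^i_{\fs'}}$ act identically on single letters $x^\ell_{\mathfrak r}$, faithfulness on $\mathbb{F}_{n_\ell}^+$ requires agreement on \emph{all} words: passing from length $m$ to length $m+1$ forces you to compare $(g|_{x^i_\fs}|_{x^j_\ft})|_{x^\ell_{\mathfrak r}}$ with $(g|_{x^j_{\ft'}}|_{x^i_{\fs'}})|_{x^\ell_{\mathfrak r}}$, which is the same type of identity one level deeper and cannot be fed back into the single-swap hypothesis. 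So the faithfulness argument, as written, does not close.
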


\begin{proof}
``Only if": If $\theta_{ij}( x^i_\fs, x^j_\ft )= (x^j_{\ft'}, x^i_{\fs'})$, then $x^i_\fs x^j_\ft = x^j_{\ft'} x^i_{\fs'}$. So from (B4) and (B6) one has
\begin{align*}
(g \cdot x_\fs^i) (g \vert_{x_\fs^i} \cdot x_\ft^j)&=g\cdot(x^i_\fs x^j_\ft) = g\cdot(x^j_{\ft'} x^i_{\fs'})=(g \cdot x_{\ft'}^j)(g \vert_{x_{\ft'}^j} \cdot x_{\fs'}^i)\\
g|_{x_\fs^i}|_{ x_\ft^j}&=g|_{x_\fs^ix_\ft^j}=g|_{x_{\ft'}^jx_{\fs'}^i}=g|_{x_{\ft'}^j}|_{ x_{\fs'}^i}
\end{align*}
for all $g\in G$. In particular, \eqref{E:welldef} 
holds true.

``If": In fact, for $g\in G$ and $u_i\in \mathbb{F}_{n_i}^+$ $(1\le i\le k)$, define
\begin{align*}
g\cdot(x_{u_1}^{1}x_{u_2}^{2}\cdots x_{u_k}^{k})
&:=(g\cdot x_{u_1}^{1})(g|_{x_{u_1}^{1}}\cdot x_{u_2}^{2})\cdots
     ({g|_{x_{u_1}^{1}}}|_{x_{u_2}^{2}}\ldots|_{x_{u_{k-1}}^{{k-1}}} \cdot x_{u_k}^{k}),\\
g|_{x_{u_1}^{1}x_{u_2}^{2}\cdots x_{u_k}^{k}}
&:=g|_{x_{u_1}^{1}}|_{x_{u_2}^{2}}\ldots |_{x_{u_{k}}^{k}}.
\end{align*}
Notice that using (B2) and (B8) one can easily see that \eqref{E:welldef} 
holds true for all $g\in G$.

Here we only check Condition (B4) in Definition \ref{D:Z-S}, and similarly for the others.
Clearly it suffices to verify
\[
g\cdot (x_{\fs_1}^{i_1}\cdots x_{\fs_k}^{i_k})
=(g\cdot x_{\fs_1}^{i_1})(g|_{x_{\fs_1}^{i_1}}\cdot x_{\fs_2}^{i_2})\cdots
     ({g|_{x_{\fs_1}^{i_1}}}|_{x_{\fs_2}^{i_2}}\ldots|_{x_{\fs_{k-1}}^{i_{k-1}}} \cdot x_{\fs_k}^{i_k}) 
\]
where all $i_n$'s distinct and $\fs_j\in [m_{i_j}]$.
But this follows from the facts that (B2) and (B8) hold true on each $(G, [m_i])$, and that each word $x_{\fs_1}^{i_1}\cdots x_{\fs_k}^{i_k}$
can be obtained from $x_{\ft_1}^{1}\cdots x_{\ft_k}^{k}$ after finite steps by switching the super indices $i_n$ and $i_{n+1}$ with $i_n>i_{n+1}$ only once at one time.
\end{proof}

\begin{rem}
To see the above proposition is a generalization of  \cite[Proposition 3.10]{BRRW14}, let $k=2$, $\theta:=\theta_{12}$, $x:=x_\fs^1$, $y:=x_\ft^2$,
 $x_{\fs'}^1=\theta_X(x_\fs,y_\ft)$ and $y_{\ft'}^2=\theta_Y(x_\fs,y_\ft)$. Then
 \begin{align*}
&\begin{cases}
g\cdot\theta_Y(x,y)=\theta_Y(g\cdot x, g|_x\cdot y), \\
g_{\theta_Y(x,y)}\cdot\theta_X(x,y)=\theta_X(g\cdot x, g|_x\cdot y)
\end{cases}\\
\Leftrightarrow& (g\cdot\theta_Y(x,y))(g_{\theta_Y(x,y)}\cdot\theta_X(x,y))=\theta_Y(g\cdot x, g|_x\cdot y)\theta_X(g\cdot x, g|_x\cdot y)\\
& (\text{by the unique factorization property of }\Fth)\\
\Leftrightarrow& g\cdot(\theta_Y(x,y)\theta_X(x,y))=(g\cdot x)(g|_x\cdot y)\\
\Leftrightarrow& g\cdot(xy)=(g\cdot x)(g|_x\cdot y).
\end{align*}
\end{rem}

\begin{eg}
\label{Eg:nrLCM}
Let $n_i=n$ for all $1\le i\le k$ and $\theta_{ij}(\fs,\ft)=(\fs,\ft)$ for all $1\le i<j\le k$. Then it is easy to check that $\Fth$ is a
$k$-graph (also see \cite{DY09}). Let $G$ be an \textit{arbitrary} group self-similarly acting on each $\Fn$ in the same way. So if
\[
g \cdot e_\fs^i= e_{\fs_1}^i, \ g \vert_{e_\fs^i}=h,\ h \cdot e_\ft^j=e_{\ft_1}^j,\ h|_{e_\ft^j}=h_1,
\]
then
\[
g \cdot e_\fs^j= e_{\fs_1}^j, \ g \vert_{e_\fs^j}=h,\ h \cdot e_\ft^i=e_{\ft_1}^i,\ h|_{e_\ft^i}=h_1.
\]
Thus
\begin{align*}
(g \cdot e_\fs^i) (g \vert_{e_\fs^i} \cdot e_\ft^j)
&=e_{\fs_1}^ie_{\ft_1}^j=e_{\fs_1}^je_{\ft_1}^i
=(g \cdot e_{\fs}^j)(g \vert_{e_{\ft}^j} \cdot e_{\fs}^i),\\
g|_{e_\fs^i}|_{e_\ft^j}
&=h|_{e_\ft^j}=h_1=h|_{e_\ft^i}=g|_{e_\fs^j}|_{e_\ft^i}.
\end{align*}
It follows from Proposition \ref{P:kZ-S} that one obtains the product $\Fth\bowtie G$ of self-similar actions $(G,[n])$.
\end{eg}

It is worth mentioning that the above $\Fth$ is not a right LCM at all (as $\Fth$ is periodic), and so in this case $\Fth\bowtie G$ is not right LCM.

In the sequel, we exhibit a class of products of self-similar actions satisfying all conditions in Theorem \ref{T:kQ(LambdaxG)}, which plays a vital role in this paper.

\begin{eg}
\label{Eg:podo}
Let $n_1,\ldots, n_k$ be $k$ \textit{arbitrarily} positive integers. For each $1 \leq i \leq k$, let $X_i:=\{x^i_\fs: \fs\in [n_i]\}$, and
let $\bZ$ act on each $X_i$ as an odometer (see Example \ref{Eg:odometer}).  For $1 \leq i<j \leq k$, let $\theta_{ij}:X_i \times X_j \to X_j \times X_i$ be a bijection defined by
\begin{align}
\label{E:odo}
\theta_{ij}(x^i_\fs,x^j_\ft)=(x^j_{\ft'},x^i_{\fs'})\text{ if }\fs+\ft n_i=\ft'+\fs' n_j\  (\fs,\fs'\in [n_i], \ft,\ft'\in[n_j]).
\end{align}
Let
\[
\Fth=\big\langle x_\ft^i:  \ft\in [n_i],\ 1\le i\le k; \, \theta_{ij} \text{ in }\eqref{E:odo}, \, 1\le i<j\le k \big\rangle.
\]
One can easily check that $\Fth$ satisfies the cubic condition, and so $\Fth$ is indeed a single-vertex $k$-graph.
Moreover, the relation \eqref{E:welldef} 
is satisfied.
Then applying Proposition \ref{P:kZ-S} gives a Zappa-Sz\'ep product $\Fth\bowtie\bZ$, which is
the product of odometers $\{(G, [n_i])\}_{i=1}^k$.
\end{eg}

\begin{rem}
\label{R:surj}
By induction, one can check that, given $\mu \in \bF_\theta^+$ and $l \in \mathbb{Z}$, there exists $l' \in \mathbb{Z}$ such that $l' \vert_\mu=l$.
So the restriction map satisfies the condition required in Theorem \ref{T:kQ(LambdaxG)}.
\end{rem}

Due to its importance of the above example and also the natural definition of $\theta$,  the induced Zappa-Sz\'ep product $\Fth\bowtie\bZ$
deserves its own name.

\begin{defn}
\label{D:podo}
The Zappa-Sz\'ep product $\Fth\bowtie\bZ$ given in Example \ref{Eg:podo} is called the \textit{standard product of odometers  $\{(\bZ, [n_i])\}_{i=1}^k$}.
\end{defn}

The following proposition is a generalization of a result from \cite{BRRW14}.

\begin{prop}
\label{P:iffLCM}
Keep the same notation as in Example \ref{Eg:podo}.
Then the following statements are equivalent:
\begin{enumerate}
\item\label{n_1 n_2 dots mut coprime} $n_i$'s are pairwise coprime;
\item\label{existence of st=t's' given s,t'} for $1 \leq i < j \leq k$, given $(\fs,\ft')\in[n_i]\times [n_j] $, there exists a unique pair $(\fs',\ft) \in [n_i]\times [n_j]$ such that
$x^i_{\fs}x^j_{\ft}=x^j_{\ft'}x^i_{\fs'}$;
\item\label{unique CM with degree d(mu) lor d(nu)} any two elements $\mu,\nu \in \bF_\theta^+$ having a right common multiple have a unique right least common multiple with degree $d(\mu) \lor d(\nu)$;
\item\label{F_theta^+ rLCM} $\bF_\theta^+$ is right LCM.
\end{enumerate}
\end{prop}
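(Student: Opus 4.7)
The plan is to prove the cycle $(1) \Rightarrow (2) \Rightarrow (3) \Rightarrow (4) \Rightarrow (1)$. Two of these implications are short. For $(1)\Rightarrow(2)$ I would reduce the defining equation $\fs + \ft n_i = \ft' + \fs' n_j$ modulo $n_j$ to $n_i \ft \equiv \ft' - \fs \pmod{n_j}$; with $\gcd(n_i, n_j) = 1$ this has a unique solution $\ft \in [n_j]$, and $\fs' = (\fs + \ft n_i - \ft')/n_j$ then automatically lies in $[n_i]$ since $\fs + \ft n_i < n_i n_j$. For $(4)\Rightarrow(1)$ I would argue the contrapositive: if $\gcd(n_i, n_j) = d > 1$, the $d$ pairs $(\ft, \fs') = (r n_j/d,\, r n_i/d)$ for $r \in [d]$ all solve $\ft n_i = \fs' n_j$, yielding $d \geq 2$ distinct common extensions $x^i_0 x^j_{r n_j/d} = x^j_0 x^i_{r n_i/d}$ of $x^i_0, x^j_0$ in $\Fth^{e_i+e_j}$; none divides another, so $\Fth$ is not right LCM.

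For $(3)\Rightarrow(4)$ I use that in $\Fth$ any common multiple $\gamma$ of $\mu,\nu$ satisfies $d(\gamma) \geq d(\mu) \vee d(\nu)$, and its prefix of degree $d(\mu) \vee d(\nu)$ (unique by the factorization property) is itself a common extension. Hence (3) produces a unique candidate at this minimal degree, and every common multiple factors through it; thus $\Fth$ is right LCM.

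The main work lies in $(2)\Rightarrow(3)$. I would first treat the two-color case: for $\mu = x^i_u \in \Fth^{me_i}$ and $\nu = x^j_v \in \Fth^{ne_j}$ with $i \neq j$, the normal-form identification $x^i_u x^j_a = x^j_{a'} x^i_{u'}$ yields a bijection $\Phi_{m,n}: [n_i]^m \times [n_j]^n \to [n_j]^n \times [n_i]^m$, and common extensions in degree $me_i + ne_j$ correspond precisely to the fibre of $a \mapsto \pi_1 \Phi_{m,n}(u, a)$ above $v$. I would induct on $m+n$: the base case $m=n=1$ is exactly (2), while the inductive step uses the bubble-sort factorization of $\Phi_{m,n}$ into successive $\theta_{ij}$-swaps (each, for a fixed letter on one side, a bijection by (2)) to conclude that this fibre is at most a singleton. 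For general $\mu, \nu$ I would write $\mu = x^1_{u_1}\cdots x^k_{u_k}$ and $\nu = x^1_{v_1}\cdots x^k_{v_k}$ in normal form and induct on $|d(\mu)|+|d(\nu)|$, stripping off the leftmost nontrivial color from the longer word at each step and invoking left cancellation in $\Fth$ together with the two-color uniqueness just established.

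The hard part will be the overlapping-color case of $(2)\Rightarrow(3)$: when the same color appears in both $\mu$ and $\nu$, the $\theta$-rewrites produce cross-color interactions that are easy to mis-bookkeep, and the induction must be arranged so that uniqueness at each reduction step pulls back to the original pair via left cancellativity of $\Fth$.
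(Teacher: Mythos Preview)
Your proposal is correct and follows exactly the paper's cycle $(1)\Rightarrow(2)\Rightarrow(3)\Rightarrow(4)\Rightarrow(1)$; your arguments for $(1)\Rightarrow(2)$ and $(4)\Rightarrow(1)$ match the paper's almost verbatim (the paper exhibits only the two extensions $x^i_0x^j_0$ and $x^i_0x^j_{n_j/l}$ rather than all $d$ of them, but the idea is identical). The paper dismisses $(2)\Rightarrow(3)$ and $(3)\Rightarrow(4)$ as ``straightforward'' without further comment, so your two-stage induction (first the two-colour bubble-sort reduction, then stripping colours in the general case after reducing to $d(\mu)\wedge d(\nu)=0$ via left cancellation) is simply a fleshed-out version of what the authors leave implicit; the outline you give for that step is sound.
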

\begin{proof}
(\ref{n_1 n_2 dots mut coprime})$\Rightarrow$(\ref{existence of st=t's' given s,t'}).
Fix $1 \leq i < j \leq k$ and  $(\fs,\ft')\in[n_i]\times [n_j]$. Assume that $(\fs',\ft), (\fs'',\ft'') \in [n_i]\times [n_j]$ such that
$x^i_{\fs}x^j_{\ft}=x^j_{\ft'}x^i_{\fs'}$ and $x^i_{\fs}x^j_{\ft''}=x^j_{\ft'}x^i_{\fs''}$. Then $\fs+\ft n_i=\ft'+\fs' n_j$ and $\fs+\ft'' n_i=\ft'+\fs'' n_j$. So
$(\ft-\ft'')n_i=(\fs'-\fs'')n_j$. Since $n_i$ and $n_j$ are coprime, $\ft=\ft''$ and $\fs'=\fs''$.

(\ref{existence of st=t's' given s,t'})$\Rightarrow$(\ref{unique CM with degree d(mu) lor d(nu)}) and (\ref{unique CM with degree d(mu) lor d(nu)})$\Rightarrow$(\ref{F_theta^+ rLCM}).
The proof is straightforward.

(\ref{F_theta^+ rLCM})$\Rightarrow$(\ref{n_1 n_2 dots mut coprime}).  To the contrary, suppose that there exist $1 \leq i < j \leq k$ such that $n_i$ and $n_j$ are not coprime.
Let $l:=\gcd(n_i,n_j)$. Then $l >1$. By the definition of $\theta_{ij}$ in \eqref{E:odo}, we have $x^i_{0}x^j_{n_j/l}=x^j_{0}x^i_{n_i/l}$ and
$x^i_{0}x^j_{0}=x^j_{0}x^i_{0}$. We deduce that $x^i_{0}$ and $x^j_{0}$ have right common multiples, but they do not have a right least common multiple.
This contradicts the assumption that $\bF_\theta^+$ is right LCM. Therefore $n_i$'s are pairwise coprime.
\end{proof}

\begin{rem}
\label{L:uLCM}
In order to include more examples, let us emphasize again that $n_i$'s  are arbitrary positive integers.
As shown in Proposition~\ref{P:iffLCM}, $\bF_\theta^+$ is right LCM if and only if $n_i$ are pairwise coprime. Therefore \cite[Theorem 5.2]{BRRW14} only applies  to the case that
$n_i$'s are pairwise coprime. However, with the aid of Theorem \ref{T:kQ(LambdaxG)}, we are still able to simplify $\mathcal{Q}(\bF_\theta^+ \bowtie \mathbb{Z})$ without any conditions for $n_i$'s.
\end{rem}

\begin{thm}
\label{T:universal}
Let $\bF_\theta^+ \bowtie \mathbb{Z}$ be the standard product of odometers  $\{(\bZ$, $[n_i])\}_{i=1}^k$.
Then $\mathcal{Q}(\bF_\theta^+ \bowtie \mathbb{Z})$ is isomorphic to the universal C*-algebra $\mathcal{A}$ generated by a unitary $f$ and a family of isometries $\{g_{x^i_{\fs}}: \fs\in[n_i], 1 \leq i \leq k\}$ satisfying
\begin{enumerate}
\item[(i)]
\label{sum g_xi g_xi*=1} $\sum_{\fs\in [n_i]}g_{x^i_{\fs}}g_{x^i_{\fs}}^*=1$ for all $1 \leq i \leq k$;
\item[(ii)]\label{f g_xi} for $1 \leq i \leq k$,
$f g_{x^i_{\fs}}= \begin{cases}
    g_{x^i_{\fs+1}} &\text{ if } 0 \leq \fs <n_i-1 \\
    g_{x^i_{0}} f &\text{ if } \fs=n_i-1;
\end{cases}$
\item[(iii)]\label{g_xi g_yj}
$g_{x^i_{\fs}}g_{x^i_{\ft}}=g_{x^j_{\ft'}}g_{x^i_{\fs'}}$ whenever $\theta_{ij}(\fs, \ft)=(\ft',\fs')$ for all $1 \leq i <j \leq k$, $ \fs,\fs'\in [n_i] $ and $\ft,\ft'\in[n_j]$.
\end{enumerate}
\end{thm}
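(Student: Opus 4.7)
The plan is to deduce Theorem \ref{T:universal} as a direct specialization of Theorem \ref{T:kQ(LambdaxG)} to the standard product of odometers. By Remark \ref{R:surj}, the restriction map of $\Fth \bowtie \bZ$ satisfies the hypothesis of Theorem \ref{T:kQ(LambdaxG)}, which therefore realizes $\mathcal{Q}(\Fth \bowtie \bZ)$ as the universal C*-algebra for pairs $(u,v)$, where $u$ is a unitary representation of $\bZ$, $v$ is a $*$-representation of $\Fth$, and the compatibility \eqref{E:ST} holds for every $(g,\mu) \in \bZ \times \Fth$. The rest of the argument recasts this presentation in the generators and relations of the statement.

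First I would reduce the data on both sides to generators. A unitary representation of $\bZ$ is determined by the single unitary $f := u_1$, and a $*$-representation $v$ of the single-vertex $k$-graph $\Fth$ is determined by its values $g_{x^i_\fs} := v_{x^i_\fs}$ on the edges, subject to exactly the defect-free condition (i) and the $\theta$-commutation relations (iii). Hence all that remains is to show that the compatibility \eqref{E:ST} for every pair $(g,\mu) \in \bZ \times \Fth$ is equivalent to its restriction to $(g,\mu) = (1, x^i_\fs)$, which is condition (ii).

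The key technical step is a pair of short inductions using the Zappa-Sz\'ep identities (B2), (B4), (B6) and (B8): if \eqref{E:ST} holds for $(g,\mu)$ and $(g|_\mu, \nu)$, then it holds for $(g, \mu\nu)$; and if it holds for $(g_1, \mu)$ and $(g_2, g_1 \cdot \mu)$, then it holds for $(g_1 + g_2, \mu)$. This reduces the verification to $g \in \{1, -1\}$ and $\mu = x^i_\fs$. The $g = 1$ case is precisely (ii). For $g = -1$, the odometer formulas give $(-1) \cdot x^i_\fs = x^i_{\fs-1}$ with $(-1)|_{x^i_\fs} = 0$ when $\fs > 0$, and $(-1) \cdot x^i_0 = x^i_{n_i-1}$ with $(-1)|_{x^i_0} = -1$; multiplying condition (ii) on the left and right by $f^{-1}$ and using unitarity of $f$ produces the corresponding instance of \eqref{E:ST}.

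The hard part, if any, is just careful bookkeeping between the additive $\bZ$-notation and the Zappa-Sz\'ep restriction maps; the argument is otherwise mechanical. Once \eqref{E:ST} is verified in full generality, the universal properties of $\mathcal{Q}(\Fth \bowtie \bZ)$ and of $\mathcal{A}$ produce mutually inverse $*$-homomorphisms, establishing the desired isomorphism.
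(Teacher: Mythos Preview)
Your proposal is correct and follows essentially the same route as the paper: both invoke Theorem~\ref{T:kQ(LambdaxG)} (via Remark~\ref{R:surj}) and then identify the data $(u,v)$ satisfying \eqref{E:ST} with the generators $(f,g_{x^i_\fs})$ subject to (i)--(iii). The only difference is that where the paper verifies \eqref{E:ST} for $(1,x^i_\fs)$ and then writes ``one can easily check that Eq.~\eqref{E:ST} holds true,'' you spell out the two inductions (on the length of $\mu$ via (B4),(B6), and on $|g|$ via (B2),(B8)) and the passage from $g=1$ to $g=-1$ explicitly.
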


\begin{proof}
We adopt the characterization of $\mathcal{Q}(\bF_\theta^+ \bowtie \mathbb{Z})$ from Theorem \ref{T:kQ(LambdaxG)}. Let $\{t_\mu,s_N:\ \mu \in \bF_\theta^+,N \in \mathbb{Z}\}$ be the generators of $\mathcal{Q}(\bF_\theta^+ \bowtie \mathbb{Z})$.

For $N \in \mathbb{Z}$, define $S_N:=f^N$. Clearly $S$ is a unitary representation of $\bZ$ in $\A$.
Define $T_\mt:=1$, and $T_{x^i_{\fs}}:=g_{x^i_\fs}$  for $\fs\in[n_i]$ ($1\le i\le k$).
By (iii), for any word $w=u_1\cdots u_k\in\Fth$ with $u_i\in [n_i]$, we can define an isometry
$T_w:=g_{x^1_{u_1}}\cdots g_{x^k_{u_k}}$. So this yields an isometric representation of $\Fth$ in $\A$.
Then it follows from (i) that $T$ is a $*$-representation of $\Fth$.

For $1 \leq i \leq k$ and $\fs\in [n_i]$, (ii) implies that
\begin{align*}
S_1 T_{x^i_{\fs}}=f g_{x^i_{\fs}}=\begin{cases}
    g_{x^i_{\fs+1}} &\text{ if } 0 \leq \fs <n_i-1 \\
    g_{x^i_{0}} f &\text{ if } \fs=n_i-1;
\end{cases} =T_{1 \cdot x^i_{\fs}}S_{1|_{x^i_{\fs}}}.
\end{align*}
Then one can easily check that Eq.~\eqref{E:ST} holds true.
By the universal property of $\mathcal{Q}(\bF_\theta^+ \bowtie \mathbb{Z})$, there exists a homomorphism $\varphi:\mathcal{Q}(\bF_\theta^+ \bowtie \mathbb{Z}) \to \mathcal{A}$ such that $\varphi(s_N)=S_N$ and $\varphi(t_\mu)=T_\mu$ for all $N \in \mathbb{Z}$ and $\mu \in \bF_\theta^+$.

Conversely, define $F:=s_1$ and $G_{x^i_{\fs}}:=t_{x^i_{\fs}}$ for $\fs\in[n_i]$ ($1 \leq i \leq k$).
Since $t$ is a $*$-representation of $\Fth$, (i) and (iii) automatically hold true.
For $1 \leq i \leq k$ and $\fs\in[n_i]$, it follows from Eq.~\eqref{E:ST} that
\begin{align*}
 FG_{x^i_{\fs}}=s_1 t_{x^i_{\fs}}=t_{1 \cdot x^i_{\fs}} s_{1 \vert_{x^i_{\fs}}}= \begin{cases}
    G_{x^i_{\fs+1}} &\text{ if } 0 \leq \fs <n_i-1 \\
    G_{x^i_{0}} f &\text{ if }\fs=n_i-1,
\end{cases}
\end{align*}
which implies (ii). By the universal property of $\mathcal{A}$, there exists a homomorphism $\pi:\mathcal{A} \to \mathcal{Q}(\bF_\theta^+ \bowtie \mathbb{Z})$ such that
$\pi(f)=F,\pi(g_{x^i_{\fs}})=G_{x^i_{\fs}}$ for all $1\le i\le k$ and $\fs\in[n_i]$.

It now follows easily that $\pi \circ \varphi=\id,\varphi \circ \pi=\id$. Therefore we are done.
\end{proof}

The following properties will be used later.

\begin{cor}\label{C:properties}
Keep the same notation as in Theorem \ref{T:universal}.
Then
\begin{enumerate}
\item
$g_{x^i_{0}}g_{x^j_{0}}=g_{x^j_{0}}g_{x^i_{0}}$ for all $1 \leq i<j \leq k$;
\item
$f^\fs g_{x^i_{0}}=g_{x^i_{\fs}}$ for all $1 \leq i \leq k$ and $\fs\in[n_i]$;
\item
$f^{n_i^l N}g_{x^i_{0}}^l=g_{x^i_{0}}^l f^N$ for all $1 \leq i \leq k, l \ge 0, N\in\bZ$.
\end{enumerate}
\end{cor}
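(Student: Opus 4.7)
The plan is to derive each of the three identities directly from the defining relations (i)--(iii) of Theorem~\ref{T:universal}; no heavy machinery is required, and the only observation we need beyond the stated relations is that $f$ is a \emph{unitary}, not merely an isometry.

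For (1), I would specialize the commutation relation (iii) to $\fs=\ft=0$. The defining formula $\fs+\ft n_i=\ft'+\fs' n_j$ of $\theta_{ij}$ from \eqref{E:odo} then forces $\ft'=\fs'=0$, so (iii) collapses immediately to $g_{x^i_0}g_{x^j_0}=g_{x^j_0}g_{x^i_0}$.

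For (2), I would induct on $\fs$. The base $\fs=0$ is trivial; for the inductive step, the first branch of (ii) gives $f\cdot g_{x^i_\fs}=g_{x^i_{\fs+1}}$ whenever $\fs<n_i-1$, so combining this with the inductive hypothesis $f^\fs g_{x^i_0}=g_{x^i_\fs}$ yields $f^{\fs+1}g_{x^i_0}=g_{x^i_{\fs+1}}$.

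Item (3) requires the most work, and I would handle it in three stages. First, the case $l=1$, $N=1$: applying (2) with $\fs=n_i-1$ gives $g_{x^i_{n_i-1}}=f^{n_i-1}g_{x^i_0}$, and the second branch of (ii) gives $f\,g_{x^i_{n_i-1}}=g_{x^i_0}f$; combining these produces $f^{n_i}g_{x^i_0}=g_{x^i_0}f$. Second, I would extend to arbitrary $N\in\bZ$: iterating the $N=1$ case handles $N\ge 0$, while multiplying $f^{n_i}g_{x^i_0}=g_{x^i_0}f$ on the left by $f^{-n_i}$ and on the right by $f^{-1}$ (using unitarity of $f$) gives $f^{-n_i}g_{x^i_0}=g_{x^i_0}f^{-1}$, from which negative $N$ follows by iteration. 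Third, I would induct on $l$: assuming $f^{n_i^l M}g_{x^i_0}^l=g_{x^i_0}^l f^M$ for all $M\in\bZ$, I apply this with $M=n_iN$ to commute $f^{n_i^{l+1}N}$ past $g_{x^i_0}^l$, and then apply the $l=1$ identity to the residual $f^{n_iN}g_{x^i_0}$ to obtain $g_{x^i_0}^{l+1}f^N$.

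There is no substantive obstacle here: every step is a direct unwinding of the relations. The only points requiring care are the bookkeeping of the powers of $n_i$ in (3) and the use of the unitarity of $f$, which is precisely what allows the identity $f^{n_iN}g_{x^i_0}=g_{x^i_0}f^N$ to be extended from $N\ge 0$ to all of $\bZ$ and, through the subsequent induction, to arbitrary exponents $l\ge 0$.
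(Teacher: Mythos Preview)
Your proposal is correct and follows essentially the same route as the paper: parts (1) and (2) are read off directly from relations (iii) and (ii), and part (3) is handled by first establishing $f^{n_i}g_{x^i_0}=g_{x^i_0}f$, then inducting on $l$, with the unitarity of $f$ used to pass to arbitrary $N\in\bZ$. The only cosmetic difference is that the paper reduces to the case $N=1$ before inducting on $l$, whereas you carry general $N$ through the induction; both orderings yield the same computation.
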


\begin{proof}
The proofs of (1) and (2)  follow directly from Theorem \ref{T:universal}.

Clearly, the identities of (3) hold trivially when either $l=0$ or $N=0$. So we may assume that $l \geq 1$ and $N \ne 0$. Since $f$ is a unitary, it suffices to verify them for $N>0$.
Also, it is easy to see that one only needs to show $f^{n_i^l}g_{x^i_{0}}^l=g_{x^i_{0}}^l f$, and we do it by induction. Property (ii) of Theorem \ref{T:universal} gives
$f^{n_i} g_{x^i_{0}}=g_{x^i_{0}}f$. Suppose that $f^{n_i^l}g_{x^i_{0}}^l=g_{x^i_{0}}^l f$ holds for $l \geq 1$. Then
$f^{n_i^{l+1}}g_{x^i_{0}}^{l+1}=g_{x^i_{0}}^l f^{n_i}g_{x^i_{0}}=g_{x^i_{0}}^{l+1} f$. This finishes the proof.
\end{proof}

\section{$\Q(\Fth\bowtie \bZ)$ via topological $k$-graphs}
\label{S:main}

In this main section, we first construct a class of topological $k$-graphs $\{\Lambda_\bn:\bn\in\bN^k\}$, which is a higher-dimensional generalization of a topological
graphs $\{E_{n,1}:n\in \bN\}$ studied by Katsura
in \cite{Kat08}. We associate to each $\Lambda_{\bn}$ a product system $X(\Lambda_\bn)$ over $\bN^k$.
The first main result here shows that the associated Cuntz-Pimsner C*-algebra $\O_{X(\Lambda_\bn)}$
of $X(\Lambda_\bn)$ is isomorphic to the boundary quotient C*-algebra $\Q(\Fth\bowtie\bZ)$ of the standard product of $k$ odometers (Theorem \ref{T:top2}).
Then, motivated by and with the aid of some results in \cite{Cun08, Kat08, Yam09}, we prove our second main theorem (Theorem \ref{T:simple}) in this section:
$\O_{X(\Lambda_\bn)}$ is simple if and only if $\{\ln n_i: 1\le i\le k\}$ is rationally independent, and is also purely infinite in these cases.
The nuclearity of $\O_{X(\Lambda_\bn)}$ is obtained by applying some results in \cite{CLSV11, Yee07}.
By \cite{Tu99}, $\O_{X(\Lambda_\bn)}$ satisfies the UCT too. 
Combing these two theorems gives a very clear picture on $\Q(\Fth\bowtie \bZ)$ (Theorem \ref{T:simplepure}).
At the end of this section, we also provide some relations between $\Q(\Fth\bowtie\bZ)$ and the C*-algebra $\Q_\bN$ introduced by Cuntz in \cite{Cun08}.

From now on, we only consider the standard product $\Fth\bowtie\bZ$ of the odometers  $\{(\bZ, [n_i])\}_{i=1}^k$.
For our convenience, we use the notation
\[
\textbf{1}:=(1,\ldots, 1),\ \bn:=(n_1,\ldots, n_k),\  \bn^p:=\prod_{i=1}^k n_i^{p_i} \ (p\in \bN^k).
\]

\subsection{Realizing $\Q(\Fth\bowtie \bZ)$ as topological $k$-graph C*-algebras}
\label{SS:Lambdan}
In this subsection, we first construct a class of topological $k$-graphs, whose C*-algebras will be shown to be isomorphic to $\Q(\Fth\bowtie\bZ)$.

\begin{defn}
\label{D:top2}
Let $\Lambda_\bn:=\bigsqcup_{p \in \mathbb{N}^k}\mathbb{T}$  be a topological $k$-graph constructed as follows:
$\Lambda_\bn^0:=\mathbb{T} \times \{0\}$. Given $(z,p) \in \Lambda_\bn$, let
\[
r(z,p):=(z,0),\
s(z,p):=(z^{\bn^p},0),\ d(z,p):=p.
\]
For $(z,p),(w,q) \in \Lambda_\bn$ with $s(z,p)=r(w,q)$, define
\[
(z,p)\cdot (w,q):=(z,p+q).
\]
\end{defn}

One can also describe $\Lambda_\bn$ as follows:
\begin{align*}
 \Lambda_\bn^{e_i}:=\bT,\ r(z,e_i):=(z,0),\ s(z,e_i):=(z^{n_i},0)\ (z\in \bT, 1\le i\le k).
\end{align*}
The commuting squares of $\Lambda_\bn$ are given by
\[
(z,e_i)(z^{n_i},e_j)=(z,e_j)(z^{n_j},e_i) \foral z\in \bT\text{ and }1\le i\ne j\le k.
\]

Thus it is not hard to see that the graph $\Lambda_\bn$ is a $k$-dimensional generalization of Katsura's topological graph
$E_{n,1}$ in \cite{Kat08}, which can also be obtained as \hskip .5cm $\xymatrix{\bullet\ar@(lu,ld)}\times_{n,1}\bT$.
In fact, let $\Lambda$ be the single-vertex $k$-graph with
one edge for each degree $e_i$. Then one could thought of $\Lambda_\bn$ as $\Lambda\times_{\bn, \mathbf{1}} \bT$.

\begin{rem}
$\Lambda_\bn$ is indeed a topological $k$-graph (for $k\ge 1$). In fact, it suffices to verify that $\Lambda_\bn$ satisfies the cubic condition for $k\ge 3$.
To this end, consider $\lambda=(z,e_i)(z^{n_i},e_j)(z^{n_in_j}, e_\fk)$ of degree $(1,1,1)$.
Then
\begin{align*}
\lambda
&=(z,e_j)(z^{n_j},e_i) (z^{n_in_j}, e_\fk)
 =(z,e_j)(z^{n_j},e_\fk) (z^{n_jn_\fk}, e_i)\\
&=(z,e_\fk)(z^{n_\fk},e_j) (z^{n_jn_\fk}, e_i)\\
&=(z,e_i)(z^{n_j},e_\fk) (z^{n_in_\fk}, e_j)
=(z,e_\fk)(z^{n_\fk},e_i) (z^{n_in_\fk}, e_j).
\end{align*}
This exactly says that the cubic condition holds true.
\end{rem}

\begin{lem}
\label{L:CPLambda}
There is a Cuntz-Pimsnder covariant representation of $X(\Lambda_\bn)$ in $\Q(\Fth\bowtie  \bZ)$.
\end{lem}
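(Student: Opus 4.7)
The plan is to build the representation directly from the universal generators $f$ and $\{g_{x^i_0}\}_{i=1}^{k}$ of $\Q(\Fth\bowtie\bZ)$ identified in Theorem~\ref{T:universal}. Identifying $\Lambda_\bn^0\cong\bT$, I would first define the coefficient $*$-homomorphism $\pi\colon C(\bT)\to\Q(\Fth\bowtie\bZ)$ by continuous functional calculus, $\pi(a):=a(f)$, which is well defined since $f$ is unitary. Then, for each $1\le i\le k$, I would define a linear map $\psi_{e_i}\colon X_{e_i}(\Lambda_\bn)=C(\bT)\to\Q(\Fth\bowtie\bZ)$ by
\[
\psi_{e_i}(\xi):=\sqrt{n_i}\,\pi(\xi)\,g_{x^i_0}.
\]
The scaling factor $\sqrt{n_i}$ is forced because the source map $s(z,e_i)=(z^{n_i},0)$ is $n_i$-to-one, so $\langle 1,1\rangle=n_i$ in $X_{e_i}(\Lambda_\bn)$.

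To verify that $(\psi_{e_i},\pi)$ is a Toeplitz representation I would apply Lemma~\ref{L:Toep} with the dense sets $\G=\F=\{z^m:m\in\bZ\}$. The left-action identity $\pi(a)\psi_{e_i}(\xi)=\psi_{e_i}(a\cdot\xi)$ is immediate from functional calculus, while the inner-product identity reduces to the key calculation
\[
g_{x^i_0}^*\,f^m\,g_{x^i_0}=\begin{cases}f^{m/n_i}&\text{if }n_i\mid m,\\ 0&\text{otherwise},\end{cases}
\]
which I would obtain by first writing $f^m g_{x^i_0}=g_{x^i_r}f^{(m-r)/n_i}$ for $r:=m\bmod n_i$ via Corollary~\ref{C:properties}(2)--(3) and then applying $g_{x^i_0}^*g_{x^i_r}=\delta_{r,0}$ from Theorem~\ref{T:universal}(i). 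A short computation shows this matches $\tfrac{1}{n_i}\pi(\langle z^0,z^m\rangle)$, which exactly absorbs the two $\sqrt{n_i}$ factors.

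For Cuntz-Pimsner covariance on each $X_{e_i}(\Lambda_\bn)$, I would exploit that $\{z^\fs:0\le\fs<n_i\}$ is a finite orthogonal frame with $\langle z^\fs,z^\ft\rangle=n_i\delta_{\fs\ft}$, so $\id_{X_{e_i}}=\tfrac{1}{n_i}\sum_\fs\Theta_{z^\fs,z^\fs}$ and hence $\phi(a)=\tfrac{1}{n_i}\sum_\fs\Theta_{az^\fs,z^\fs}\in\K(X_{e_i})$. Applying $\psi_{e_i}^{(1)}$ then gives
\[
\psi_{e_i}^{(1)}(\phi(a))=\tfrac{1}{n_i}\sum_{\fs=0}^{n_i-1}\psi_{e_i}(az^\fs)\psi_{e_i}(z^\fs)^*=\pi(a)\sum_{\fs=0}^{n_i-1}g_{x^i_\fs}g_{x^i_\fs}^*=\pi(a)
\]
by Theorem~\ref{T:universal}(i), thereby verifying the covariance condition.

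Finally, to extend to the whole product system $X(\Lambda_\bn)$ I would set, for $p\in\bN^k$,
\[
\psi_p(\xi):=\sqrt{\bn^p}\,\pi(\xi)\,g_{x^1_0}^{p_1}\cdots g_{x^k_0}^{p_k},
\]
which is independent of the ordering of the factors by the commutation $g_{x^i_0}g_{x^j_0}=g_{x^j_0}g_{x^i_0}$ of Corollary~\ref{C:properties}(1). Multiplicativity $\psi_p(x)\psi_q(y)=\psi_{p+q}(x\diamond y)$ reduces to the relation $g_{x^i_0}\,b(f)=b(f^{n_i})\,g_{x^i_0}$ from Corollary~\ref{C:properties}(3), which pulls the $f$-part of $y(f)$ past the $g$-factors and reproduces $(x\diamond y)(z)=x(z)y(z^{\bn^p})$. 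Cuntz-Pimsner covariance for the full system then follows from Lemma~\ref{L:CP}. The main technical obstacle is precisely this multi-index bookkeeping of how $f$ commutes with products $g_{x^1_0}^{p_1}\cdots g_{x^k_0}^{p_k}$, but Corollary~\ref{C:properties} packages exactly the identities required.
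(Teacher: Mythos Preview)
Your construction is correct and coincides with the paper's: its formula $\psi_p(\gamma_{\fs+n_{i_0}l})=\bn^{p/2}\,g_{x^{i_0}_\fs}f^l g_{x^{i_0}_0}^{p_{i_0}-1}\bg_0^p$ (where $i_0$ is the least nonzero coordinate of $p$ and $\bg_0^p=\prod_{j>i_0}g_{x^j_0}^{p_j}$) is precisely your closed form $\psi_p(\xi)=\sqrt{\bn^p}\,\xi(f)\prod_i g_{x^i_0}^{p_i}$ after applying Corollary~\ref{C:properties}(2)--(3), and both arguments rest on Lemmas~\ref{L:Toep} and~\ref{L:CP} together with the identities of Corollary~\ref{C:properties}. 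Your parametrisation is tidier in that it avoids the $i_0$ case split the paper uses in Steps~1 and~2.

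The only point to make explicit is that Definition~\ref{D:Toep}(T1) requires $(\psi_p,\pi)$ to be a representation for \emph{every} $p\in\bN^k$, whereas you verify it only at $p=e_i$; the paper does carry out this check for general $p$ in its Step~1. In your setup this gap closes immediately, either by iterating your key identity to obtain $\big(\prod_i g_{x^i_0}^{p_i}\big)^{\!*}f^m\big(\prod_i g_{x^i_0}^{p_i}\big)=\delta_{\bn^p\mid m}\,f^{m/\bn^p}$, or by observing that once (T2) and (T1) at each $e_i$ hold, (T1) for arbitrary $p$ follows from the tensor decomposition $X_p\cong\bigotimes_i X_{e_i}^{\otimes p_i}$ and a telescoping of inner products.
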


\begin{proof}

In the sequel, we adopt the characterization of $\Q(\Fth\bowtie  \bZ)$ from Theorem \ref{T:universal}.

Let $\iota:\Lambda_\bn^0 \to \mathbb{C}$ be the embedding map.
Since $f$ is a unitary in $\Q(\Fth\bowtie  \bZ)$, there exists a homomorphism $\psi_0:\rC(\Lambda^0) \to \Q(\Fth\bowtie  \bZ)$ such that $\psi_0(\iota)=f$.

Fix $0 \neq p \in \mathbb{N}^k$. For $l \in \mathbb{Z}$, define $\gamma_l:\Lambda_\bn^{p} \to \mathbb{C}$ by
\[
\gamma_l(z,p):=z^l \qforal  z \in \mathbb{T}.
\]
Let $\mathcal{F}:=\{\gamma_l\}_{l \in \mathbb{Z}}$ and $\mathcal{G}:=\{\iota\}$.
 It is straightforward to see that $\Vert \gamma\Vert_{X_p(\Lambda_\bn)} \leq \sqrt{\bn^p}\Vert \gamma \Vert_{\sup}$ for all $\gamma \in X_p(\Lambda_\bn)$.
 By the Stone-Weierstrass theorem, the linear span of $\mathcal{F}$ is dense in $X_p(\Lambda_\bn)$. It is also straightforward to see that $\mathcal{G}$ generates $\rC(\Lambda^0)$.
Furthermore, $\mathcal{G}\cdot\mathcal{F} \subset \mathcal{F}$.

\smallskip
\textbf{Step 1.}
Construct a linear map $\psi_p: X_p(\Lambda_\bn)\to  \Q(\Fth\bowtie  \bZ)$ such that  $(\psi_p, \psi_0)$ is a representation of $X_p(\Lambda_\bn)$.

Let $i_0:=\min\{1\le i\le k: p_i\ne 0\}$ and $\bg_0^p:=\prod_{i=i_0+1}^{k}g_{x^i_{0}}^{p_i}$.

Clearly, $\mathcal{F}$ is linear independent. Define a linear map $\psi_p:\spn \mathcal{F} \to  \Q(\Fth\bowtie  \bZ)$ by
\begin{align}
\label{E:psigamma}
\psi_p(\gamma_{\fs+n_{i_0}l})
=\bn^{\frac{p}{2}} g_{x^{i_0}_\fs}f^l g_{x^{i_0}_0}^{p_{i_0}-1} \bg_0^p \qforal s\in[n_{i_0}] \text{ and } l \in \mathbb{Z}.
\end{align}

Then we have
\[
\psi_p(\iota \cdot \gamma_{\fs+n_{i_0}l})=\psi_0(\iota)\psi_p(\gamma_{\fs+n_{i_0}l})\qforal \fs\in[n_{i_0}] \text{ and }l \in \mathbb{Z}.
\]
This is done by the following calculations:
For $0 \leq \fs <n_{i_0}-1$ and $l \in \mathbb{Z}$
\begin{align*}
\psi_p(\iota \cdot \gamma_{\fs+n_{i_0}l})
&=\psi_p(\gamma_{\fs+1+n_{i_0}l})
   =\bn^{\frac{p}{2}} g_{x^{i_0}_{\fs+1}}f^l g_{x^{i_0}_0}^{p_{i_0}-1} \bg_0^p\\
&=\bn^{\frac{p}{2}} fg_{x^{i_0}_{\fs}}f^l g_{x^{i_0}_0}^{p_{i_0}-1} \bg_0^p
   =\psi_0(\iota)\psi_p(\gamma_{\fs+n_{i_0}l}),
\end{align*}
and  by Corollary \ref{C:properties}
\begin{align*}
\psi_p(\iota \cdot \gamma_{n_{i_0}-1+n_{i_0}l})
&=\psi_p(\gamma_{n_{i_0}(l+1)})
  =\bn^{\frac{p}{2}} g_{x^{i_0}_0}f^{l+1} g_{x^{i_0}_0}^{p_{i_0}-1} \bg_0^p\\
 &=\bn^{\frac{p}{2}} fg_{x^{i_0}_{n_{i_0}-1}}f^{l} g_{x^{i_0}_0}^{p_{i_0}-1} \bg_0^p
   =\psi_0(\iota)\psi_p(\gamma_{n_{i_0}-1+n_{i_0}l}).
\end{align*}

Now for $\fs,\fs' \in[n_{i_0}]$ and $l,l' \geq 0$ with $\fs+n_{i_0}l,\fs'+n_{i_0}l' \in[\bn^p]$, we claim that
\begin{align}
\nonumber
\psi_p&(\gamma_{\fs+n_{i_0}l+\bn^p m})^*\psi_p(\gamma_{\fs'+n_{i_0}l'+\bn^p m'})\\
\label{E:psi}
&=\psi_0\Big(\langle\gamma_{\fs+n_{i_0}l+\bn^p m},\gamma_{\fs'+n_{i_0}l'+\bn^p m'}\rangle_{\rC(\Lambda_\bn^0)}\Big)
\end{align}
for all $m,m'\in\bZ$.

On the one hand, by a direct calculation, one has
\begin{align*}
\nonumber
&\psi_0\Big(\langle\gamma_{\fs+n_{i_0}l+\bn^p m},\gamma_{\fs'+n_{i_0}l'+\bn^p m'}\rangle_{\rC(\Lambda_\bn^0)}(z,0)\Big)\\
&=\psi_0\Big(\sum_{w^{\bn^p}=z} w^{\fs'-\fs+n_{i_0}(l'-l)+\bn^p (m'-m)}\Big)\\
\nonumber
&=\delta_{\fs,\fs'}\delta_{l,l'}\bn^p \psi_0(z^{m'-m})\\
&=\delta_{\fs,\fs'}\delta_{l,l'}\bn^p f^{m'-m}.
\end{align*}

On the other hand, repeatedly applying Corollary \ref{C:properties} (3) we have
\begin{align}
\nonumber
&\psi_p(\gamma_{\fs+n_{i_0}l+\bn^p m})^*\psi_p(\gamma_{\fs'+n_{i_0}l'+\bn^p m'})\\
\nonumber
&\stackrel{\eqref{E:psigamma}}
=\bn^p  \Big(\bg_0^p\Big)^* \Big(g_{x^{i_0}_0}^{p_{i_0}-1}\Big)^*\Big(f^{l+\frac{\bn^p}{n_{i_0}}m}\Big)^*
     \Big(g_{x^{i_0}_{\fs}}\Big)^* g_{x^{i_0}_{\fs'}}f^{l'+\frac{\bn^p}{n_{i_0}}m'} g_{x^{i_0}_0}^{p_{i_0}-1} \bg_0^p\\
\nonumber
&=\delta_{\fs,\fs'} \bn^p  \Big(\bg_0^p\Big)^* \Big(g_{x^{i_0}_0}^{p_{i_0}-1}\Big)^*f^{l'-l+\frac{\bn^p}{n_{i_0}}(m'-m)} g_{x^{i_0}_0}^{p_{i_0}-1} \bg_0^p
  \  (\text{as } g_{x^{i_0}_\fs}^*g_{x^{i_0}_{\fs'}}=\delta_{\fs,\fs'})\\
\nonumber
&=\delta_{\fs,\fs'} \bn^p  \Big(\bg_0^p\Big)^* \Big(g_{x^{i_0}_0}^{p_{i_0}-1}\Big)^*f^{l'-l} g_{x^{i_0}_0}^{p_{i_0}-1}  f^{\check{\bn}^{\check{p}}(m'-m)}  \bg_0^p\\
\label{E:psi_p}
&=\delta_{\fs,\fs'} \bn^p  \Big(\bg_0^p\Big)^* \Big(g_{x^{i_0}_0}^{p_{i_0}-1}\Big)^*f^{l'-l} g_{x^{i_0}_0}^{p_{i_0}-1} \bg_0^p f^{m'-m},
\end{align}
where $\check{\bn}^{\check{p}}:=\Pi_{i\ne i_0}n_i^{p_i}=\Pi_{i=i_0+1}^k n_i^{p_i}$ as $p_i=0$ for all $i<i_0$.

If $l=l'$, then it follows from \eqref{E:psi_p} that
\begin{align*}
\psi_p(\gamma_{\fs+n_{i_0}l+\bn^p m})^*\psi_p(\gamma_{\fs'+n_{i_0}l'+\bn^p m'})
=\delta_{\fs,\fs'}\bn^pf^{m'-m}.
\end{align*}
If $l\ne l'$, then
 repeatedly applying Corollary \ref{C:properties} to \eqref{E:psi_p}, we obtain
\[
\psi_p(\gamma_{\fs+n_{i_0}l+\bn^p m})^*\psi_p(\gamma_{\fs'+n_{i_0}l'+\bn^p m'})=0.
\]
Thus
\[
\psi_p(\gamma_{\fs+n_{i_0}l+\bn^p m})^*\psi_p(\gamma_{\fs'+n_{i_0}l'+\bn^p m'})=\delta_{\fs,\fs'}\delta_{l,l'}\bn^p f^{m'-m}.
\]
Therefore, we prove \eqref{E:psi}.

By Lemma \ref{L:Toep}, $\psi_p$ can be uniquely extended to a bounded linear map on $X_p(\Lambda_\bn)$, which is still denoted by $\psi_p$.
From above, we have shown that $(\psi_p,\psi_0)$ is a representation of $X_p(\Lambda_\bn)$.

\smallskip
\textbf{Step 2.} We show that $\{(\psi_p, \psi_0):p\in \bN^k\}$ satisfies Condition (T2) of Definition \ref{D:Toep}.

For this, fix $p,q \in \mathbb{N}^k$. Let $i_0:=\min\{1\le i\le k: p_i\ne 0\}$ and $i_0':=\min\{1\le i\le k: q_i\ne 0\}$. Without loss of generality, let us assume that $i_0\le i_0'$.
Repeatedly applying Corollary \ref{C:properties} yields
\begin{align}
\nonumber
&\psi_p(\gamma_{\fs+n_{i_0}l})\psi_q(\gamma_{\fs'+n_{i_0'}l'})\\
\nonumber
&\stackrel{\eqref{E:psigamma}}=\bn^{\frac{p+q}{2}} g_{x^{i_0}_\fs}f^l \Big(g_{x^{i_0}_0}\Big)^{p_{i_0}-1}\bg_0^p
     g_{x^{i_0'}_{\fs'}}f^{l'} \Big(g_{x^{i_0'}_0}\Big)^{q_{i_0'}-1}\bg_0^q\\
\nonumber
&=\bn^{\frac{p+q}{2}} g_{x^{i_0}_\fs}f^l \Big(g_{x^{i_0}_0}\Big)^{p_{i_0}-1} \bg_0^p
     f^{\fs'}g_{x^{i_0'}_{0}}f^{l'} \Big(g_{x^{i_0'}_0}\Big)^{q_{i_0'}-1} \bg_0^q\\
\nonumber
&=\bn^{\frac{p+q}{2}} g_{x^{i_0}_\fs}f^l \Big(g_{x^{i_0}_0}\Big)^{p_{i_0}-1} f^{(\prod_{i=i_0+1}^k n_i^{p_i})\fs'}\bg_0^p
     g_{x^{i_0'}_{0}}f^{l'} \Big(g_{x^{i_0'}_0}\Big)^{q_{i_0'}-1} \bg_0^q\\
\nonumber
 &=\bn^{\frac{p+q}{2}} g_{x^{i_0}_\fs}f^l \Big(g_{x^{i_0}_0}\Big)^{p_{i_0}-1} f^{(\prod_{i=i_0+1}^k n_i^{p_i})(\fs'+n_{i_0'}l')}
      \bg_0^p
      \Big(g_{x^{i_0'}_0}\Big)^{q_{i_0'}} \bg_0^q\\
\label{E:psip}
  &=\bn^{\frac{p+q}{2}} g_{x^{i_0}_\fs} f^{l+(\prod_{i=i_0+1}^k n_i^{p_i})n_{i_0}^{p_{i_0}-1}(\fs'+n_{i_0'}l')}
      \Big(g_{x^{i_0}_0}\Big)^{p_{i_0}-1}\bg_0^p
      \Big(g_{x^{i_0'}_0}\Big)^{q_{i_0'}} \bg_0^q.
\end{align}
Notice that
\begin{align*}
\bg_0^{p+q}=\begin{cases}
\bg_0^p\bg_0^q &\text{ if } i_0=i_0'\\
\bg_0^p\Big(g_{x^{i_0'}_0}\Big)^{q_{i_0'}} \bg_0^q &\text{ if } i_0<i_0'.
\end{cases}
\end{align*}
Then continuing \eqref{E:psip} gives
\begin{align*}
&\psi_p(\gamma_{\fs+n_{i_0}l})\psi_q(\gamma_{\fs'+n_{i_0'}l'})\\
&=\begin{cases}
\bn^{\frac{p+q}{2}} g_{x^{i_0}_\fs} f^{l+(\prod_{i=i_0+1}^k n_i^{p_i})n_{i_0}^{p_{i_0}-1}(\fs'+n_{i_0'}l')}
      \Big(g_{x^{i_0}_0}\Big)^{p_{i_0}+q_{i_0}-1}\bg_0^{p+q}& \text{ if }i_0=i_0'\\
\bn^{\frac{p+q}{2}} g_{x^{i_0}_\fs}f^{l+\frac{\bn^p}{n_{i_0}}({\fs'+n_{i_0'}l'})} \Big(g_{x^{i_0}_0}\Big)^{p_{i_0}-1} \bg_0^{p+q}&\text{ if }i_0<i_0'\\
\end{cases}\\
&=\psi_{p+q}\Big(\gamma_{\fs+n_{i_0}\big(l+\frac{\bn^p}{n_{i_0}}({\fs'+n_{i_0'}l'})\big)}\Big)\\
&=\psi_{p+q}\Big(\gamma_{\fs+n_{i_0}l} \diamond \gamma_{\fs'+n_{i_0'}l'}\Big).
\end{align*}
Here the last ``=" above holds true due to the following:
\begin{align*}
\gamma_{\fs+n_{i_0}l} \diamond \gamma_{\fs'+n_{i_0'}l'}\Big(z,p+q\Big)
&=\gamma_{\fs+n_{i_0}l} \diamond \gamma_{\fs'+n_{i_0'}l'}\Big((z,p)(z^{\bn^p,q})\Big)\\
&=\gamma_{\fs+n_{i_0}l}\Big((z,p)\Big) \gamma_{\fs'+n_{i_0'}l'}\Big((z^{\bn^p},q)\Big)\\
&=z^{\fs+n_{i_0}\big(l+\frac{\bn^p}{n_{i_0}}({\fs'+n_{i_0'}l'})\big)}\\
&=\gamma_{\fs+n_{i_0}\big(l+\frac{\bn^p}{n_{i_0}}({\fs'+n_{i_0'}l'})\big)}\Big((z,p+q)\Big).
\end{align*}

Thus far, we have finished the proof of \textbf{Step 2}.

Therefore, by piecing $\{\psi_p\}_{p \in \mathbb{N}^k}$ together we get a representation $\psi:X(\Lambda_\bn) \to \Q(\Fth\bowtie  \bZ)$.

\smallskip
\textbf{Step 3.} We prove that $\psi$ is Cuntz-Pimsner covariant.

By Lemma \ref{L:CP}, it suffices to show that $(\psi_{e_i},\psi_0)$ ($1\le i\le k$) are Cuntz-Pimsner covariant.

Notice that a simple calculation shows that
\[
\left\langle \frac{\gamma^m}{\sqrt{\bn^{p}}}, \frac{\gamma^{m'}}{\sqrt{\bn^{p}}}\right\rangle
=
\begin{cases}
\iota^N & \text{ if } m'-m=\bn^pN \text{ for }N\in  \bZ\\
0&\text{if }m'-m\not\in \bn^p \bZ.
\end{cases}
\]
Then one can obtain that
\[
\phi_{e_i}(\iota)=\sum_{\fs\in [n_i]}\Theta_{\frac{\gamma_{\fs+1}}{\sqrt n_i}, \frac{\gamma_\fs}{\sqrt n_i}}.
\]
Hence
\begin{align*}
\psi_{e_i}^{(1)}(\phi_{e_i}(\iota))
&=\sum_{\fs\in [n_i]}\psi_{e_i}\left(\frac{\gamma_{\fs+1}}{\sqrt n_i}\right)\psi_{e_i}\left(\frac{\gamma_\fs}{\sqrt n_i}\right)^*
\\&=\sum_{\fs\in [n_i]} \psi_0(\iota)\psi_{e_i}\left(\frac{\gamma_\fs}{\sqrt n_i}\right)\psi_{e_i}\left(\frac{\gamma_\fs}{\sqrt n_i}\right)^*
\\&=\sum_{\fs\in [n_i]}fg_{x^i_\fs}g_{x^i_\fs}^*
\\&=f \ (\text{by Theorem~\ref{T:universal} (i)})
\\&=\psi_0(\iota).
\end{align*}
By Lemma \ref{L:Toep}, $(\psi_{e_i},\psi_0)$ is Cuntz-Pimsner covariant for $1\le i\le k$.
\end{proof}

The following theorem is inspired by \cite{Kat08}.

\begin{thm}
\label{T:top2}
Let $\Lambda_\bn$ be the topological $k$-graph constructed in Definition \ref{D:top2}, and $X(\Lambda_\bn)$ be the product system associated to
$\Lambda_\bn$.
Then $\mathcal{Q}(\bF_\theta^+ \bowtie \mathbb{Z})$ is isomorphic to $\mathcal{O}_{X(\Lambda_\bn)}$.
\end{thm}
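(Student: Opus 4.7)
The plan is to construct mutually inverse $*$-homomorphisms between $\O_{X(\Lambda_\bn)}$ and $\Q(\Fth\bowtie\bZ)$ using their respective universal properties. This route is preferable to a direct application of the gauge-invariant uniqueness theorem (Theorem \ref{gauge-inv uni thm}), which would require proving that the unitary $f\in\Q(\Fth\bowtie\bZ)$ has full spectrum $\bT$---a fact that is not at all transparent from the presentation in Theorem \ref{T:universal}. The forward direction is essentially in hand: Lemma \ref{L:CPLambda} produces a Cuntz-Pimsner covariant representation $\psi\colon X(\Lambda_\bn)\to\Q(\Fth\bowtie\bZ)$, and the universal property of $\O_{X(\Lambda_\bn)}$ supplies a homomorphism $\tilde\psi\colon\O_{X(\Lambda_\bn)}\to\Q(\Fth\bowtie\bZ)$ satisfying $\tilde\psi\circ j_X=\psi$.

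For the reverse direction, I would invoke Theorem \ref{T:universal} by setting $F:=j_{X,0}(\iota)$ and $G_{x^i_\fs}:=n_i^{-1/2}\,j_{X,e_i}(\gamma_\fs)$ for $\fs\in[n_i]$ and $1\le i\le k$. Since $\iota$ is a unitary in $C(\bT)$, the element $F$ is a unitary. The underlying computation is the inner product $\langle\gamma_\fs,\gamma_\ft\rangle(z,0)=\sum_{w^{n_i}=z}w^{\ft-\fs}=n_i\,\delta_{\fs,\ft}$ for $\fs,\ft\in[n_i]$, which shows that each $G_{x^i_\fs}$ is an isometry and that $\{\gamma_\fs/\sqrt{n_i}\}_{\fs\in[n_i]}$ is an orthonormal basis of $X_{e_i}$. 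The three relations of Theorem \ref{T:universal} can then be verified: for (i), regularity yields $\phi_{e_i}(1_A)=1_{\K(X_{e_i})}=\sum_\fs\Theta_{\gamma_\fs/\sqrt{n_i},\gamma_\fs/\sqrt{n_i}}$, so Cuntz-Pimsner covariance gives $\sum_\fs G_{x^i_\fs}G_{x^i_\fs}^*=j_{X,0}(1_A)=1$; for (ii), one has $\iota\cdot\gamma_\fs=\gamma_{\fs+1}$ when $\fs<n_i-1$ and $\iota\cdot\gamma_{n_i-1}=\gamma_{n_i}=\gamma_0\cdot\iota$; for (iii), the diamond identity $\gamma_\fs\diamond\gamma_\ft(z,e_i+e_j)=z^{\fs+\ft n_i}=z^{\ft'+\fs' n_j}=\gamma_{\ft'}\diamond\gamma_{\fs'}(z,e_i+e_j)$ follows from the defining relation of $\theta_{ij}$. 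The universal property of $\Q(\Fth\bowtie\bZ)$ then delivers a homomorphism $\rho\colon\Q(\Fth\bowtie\bZ)\to\O_{X(\Lambda_\bn)}$.

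It remains to show that $\tilde\psi$ and $\rho$ are mutual inverses. Checking both compositions on generators gives $\tilde\psi\circ\rho(f)=\psi_0(\iota)=f$ and $\tilde\psi\circ\rho(g_{x^i_\fs})=n_i^{-1/2}\psi_{e_i}(\gamma_\fs)=g_{x^i_\fs}$, while $\rho\circ\tilde\psi$ fixes $j_{X,0}(\iota)$ and $j_{X,e_i}(\gamma_\fs)$; since the latter elements together generate $\O_{X(\Lambda_\bn)}$ (by density of $\spn\{\gamma_m\}_{m\in\bZ}$ in $X_{e_i}$ together with the product system structure) and $\{f,g_{x^i_\fs}\}$ generate $\Q(\Fth\bowtie\bZ)$, the argument is complete. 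I expect the main obstacle to be the verification of relation (i): one must recognize $\{\gamma_\fs/\sqrt{n_i}\}_{\fs\in[n_i]}$ as a finite orthonormal basis of $X_{e_i}$ and combine this with regularity to invoke Cuntz-Pimsner covariance at the unit $1_A$. The other two relations, as well as the mutual inverseness, are then essentially bookkeeping.
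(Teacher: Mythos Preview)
Your proposal is correct and follows essentially the same route as the paper's proof: both directions are obtained from the universal properties of $\O_{X(\Lambda_\bn)}$ and $\Q(\Fth\bowtie\bZ)$ respectively, with the reverse map built from $F=j_{X,0}(\iota)$ and $G_{x^i_\fs}=j_{X,e_i}(\gamma_\fs/\sqrt{n_i})$ (the paper calls these functions $\xi^i_\fs$), and relations (i)--(iii) of Theorem~\ref{T:universal} are verified via the same inner-product, left/right-action, and diamond computations you outline. The only cosmetic difference is that the paper checks $\sum_\fs\Theta_{\xi^i_\fs,\xi^i_\fs}=\phi_{e_i}(1)$ by a direct pointwise calculation rather than by first noting that $\{\gamma_\fs/\sqrt{n_i}\}$ is an orthonormal basis.
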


\begin{proof}
As before, denote by $\iota:\Lambda_\bn^0 \to \mathbb{C}$ the embedding map. To simplify our writing, in what follows, denote by $\jmath:X(\Lambda_\bn) \to \mathcal{O}_{X(\Lambda_\bn)}$ the universal Cuntz-Pimsner covariant representation of $X(\Lambda_\bn)$ satisfying that $\jmath$ generates $\mathcal{O}_{X(\Lambda_\bn)}$.
Let $\psi: X(\Lambda_\bn)\to \Q(\Fth\bowtie\bZ)$ be the Cuntz-Pimsner covariant representation constructed in the proof of Lemma \ref{L:CPLambda}.
Then there exists a unital homomorphism $\varphi:\mathcal{O}_{X(\Lambda_\bn)} \to \Q(\Fth\bowtie\bZ)$ such that $\varphi \circ \jmath=\psi$.

Conversely, define
\[
I:=\jmath_{0}(1_{\rC(\Lambda_\bn^0)})\text{ and }F:=\jmath_{0}(\iota).
\]
Then $I$ is the identity of $\mathcal{O}_{X(\Lambda_\bn)}$ and $F$ is a unitary in $\mathcal{O}_{X(\Lambda_\bn)}$.
For $1\le i\le k$ and $\fs \in [n_i]$, let $\xi^i_\fs:\Lambda_\bn^{e_i} \to \mathbb{C}$ be
the function $\xi^i_\fs(z,e_i):=z^\fs /\sqrt{n_i}$ for all $z \in \mathbb{T}$, and define
\[
G_{x^i_\fs}:=\jmath_{e_i}(\xi^i_\fs);
\]
For $(z,0) \in \Lambda_\bn^0$, we have
\begin{align*}
\langle \xi^i_\fs,\xi^i_\fs \rangle_{\rC(\Lambda_\bn^0)}(z,0)
&=\sum_{\{(w,e_i) \in \Lambda_\bn^{e_i}:w^{n_i}=z\}}\vert \xi^i_\fs(w,e_i)\vert^2\\
&=\sum_{\{(w,e_i) \in \Lambda_\bn^{e_i}:w^{n_i}=z\}}\frac{1}{n_i}=1.
\end{align*}
So
\[
G_{x^i_\fs}^*G_{x^i_\fs}=\jmath_{0}(\langle \xi^i_\fs, \xi^i_\fs\rangle)=I.
\]
Hence $G_{x^i_\fs}$ is an isometry in $\mathcal{O}_{X(\Lambda_\bn)}$.

For $x \in \rC(\Lambda_\bn^{e_i})$ and $(z,e_i) \in \Lambda_\bn^{e_i}$, we have
\begin{align*}
\sum_{\fs\in[n_i]}\Theta_{\xi^i_\fs,\xi^i_\fs}(x)(z,e_i)&=\sum_{\fs\in[n_i]}\xi^i_\fs(z,e_i)\langle \xi^i_\fs,x\rangle_{\rC(\Lambda_\bn^0)}(z^n,0)
\\&=\sum_{\fs\in[n_i]}\frac{z^\fs}{\sqrt n_i}\left(\sum_{\{w \in \mathbb{T}:w^{n_i}=z^{n_i}\}}\frac{\overline{w}^\fs}{\sqrt n} x(w,e_i)\right)
\\&=\frac{1}{n_i}\sum_{\{w \in \mathbb{T}:w^{n_i}=z^{n_i}\}}\sum_{\fs\in[n_i]}z^\fs \overline{w}^\fs x(w,e_i)
\\&=x(z,e_i)
\\&=\phi_{e_i}(1_{\rC(\Lambda_\bn^0)})(x)(z,e_i),
\end{align*}
where the above 4th ``=" holds true because $\sum_{\fs\in[n_i]}z^\fs \overline{w}^\fs=0$ unless $w=z$.
So
\[
\sum_{\fs\in[n_i]}\Theta_{\xi^i_\fs,\xi^i_\fs}=\phi_{e_i}(1_{\rC(\Lambda_\bn^0)}).
\]
Since $\jmath$ is Cuntz-Pimsner covariant, we obtain
\begin{align*}
\sum_{\fs\in[n_i]}G_{x^i_\fs}G_{x^i_\fs}^*
&=\sum_{\fs\in[n_i]}\jmath_{e_i}(\xi^i_\fs)(\jmath_{e_i}(\xi^i_\fs))^*
=\jmath_{e_i}^{(1)}\Big(\sum_{\fs\in[n_i]}\Theta_{\xi^i_\fs,\xi^i_\fs}\Big)\\
&=\jmath_{e_i}^{(1)}\Big(\phi_{e_i}(1_{\rC(\Lambda_\bn^0)})\Big)
   =\jmath_{0}(1_{\rC(\Lambda_\bn^0)})=I.
\end{align*}

For $0\le \fs< n_i-1$ and $(z,e_i) \in \Lambda_\bn^{e_i}$, we have
\[
(\iota \cdot \xi^i_\fs)(z,e_i)=\frac{z z^\fs}{\sqrt n_i}=\frac{z^{\fs+1}}{\sqrt n_i}=\xi_{\fs+1}(z,e_i).
\]
So
\[
FG_{x^i_\fs}=G_{x^i_{\fs+1}}.
\]
For $\fs=n_i-1$, we compute that
\[
(\iota \cdot \xi^i_{n_i-1})(z,e_i)=\iota(z,0)\xi^i_{n_i-1}(z,e_i)=\frac{z^{n_i}}{\sqrt{n_i}}=\xi_0^i(z,e_i)\iota(z^{n_i},0)=(\xi_0^i \cdot \iota)(z,e_i).
\]
So
\[
FG_{x^i_{n_i-1}}=G_{x^i_0}F.
\]


Observe that
\[
(z,e_i+e_j)=(z,e_i)(z^{n_i},e_j)=(z,e_j)(z^{n_j},e_i)
\]
for all $z\in \bT$ and $1\le i< j\le k$.
Then for $\fs,\fs' \in [n_i]$ and $\ft,\ft' \in [n_j]$ satisfying that $\fs+\ft n_i=\ft'+\fs' n_j$, we have
\begin{align*}
(\xi^i_\fs \diamond \xi^j_\ft)(z,(e_i+e_j))&=\xi^i_\fs(z,e_i)\xi^j_\ft(z^{n_i},e_j)
\\&=\frac{z^\fs}{\sqrt n_i}\frac{z^{\ft n_i}}{\sqrt n_j}
     =\frac{z^{\ft'}}{\sqrt n_j}\frac{z^{\fs' n_j}}{\sqrt n_i}
\\&=\xi^j_{\ft'}(z,e_j)\xi^i_{\fs'}(z^{n_j},e_i)
\\&=(\xi^j_{\ft'}\diamond\xi^i_{\fs'})(z,(e_j+e_i)).
\end{align*}
So $\xi^i_\fs \diamond \xi^j_\ft=\xi^j_{\ft'}\diamond\xi^i_{\fs'}$. By Condition (T2) of Definition \ref{D:Toep}, one has
\[
G_{x^i_\fs}G_{x^j_\ft}=G_{x^{j}_{\ft'}}G_{x^i_{\fs'}}.
\]

Therefore $\{F, G_{x^i_\fs}:\fs\in[n_i], 1\le i\le k\}$ satisfy Conditions (i)--(iii) of Theorem \ref{T:universal}. By Theorem \ref{T:universal}, there exists a unital homomorphism $\pi: \mathcal{Q}(\bF_\theta^+ \bowtie \mathbb{Z}) \to \mathcal{O}_{X(\Lambda_\bn)}$ such that
$\pi(f)=F$ and $\pi(g_{x^i_\fs})=G_{x^i_\fs}$ for all $\fs \in [n_i]$ and $1\le i\le k$.

\smallskip
Since $\ca(\jmath(X(\Lambda_\bn)))=\ca(\{\jmath_{0}(X(\Lambda_\bn)_0),\jmath_{e_i}(X(\Lambda_\bn)_{e_i}):1\le i\le k\})$,
it is straightforward to see that $\pi \circ \varphi=\id, \varphi \circ \pi=\id$. Therefore we are done.
\end{proof}

\subsection{Nuclearity, simplicity and pure infiniteness of $\Q(\Fth\bowtie\bZ)$}
In this subsection, we investigate the conditions under which $\Q(\Fth\bowtie\bZ)$ is nuclear, simple, and purely infinite.

It is necessary to recall the following definitions from \cite{Yam09}.

\begin{defn}
A topological $k$-graph $\Lambda$ is said to satisfy \emph{Condition~(A)} if for any $v \in \Lambda^0$ and for any open neighborhood $V$ of $v$, there exist $v' \in V$ and $\mu \in v'\Lambda^\infty$ such that $\sigma^{p}(\mu) \neq \sigma^{q}(\mu)$ whenever $p \neq q \in \mathbb{N}^k$.
\end{defn}

\begin{defn}
Let $\Lambda$ be a regular topological $k$-graph. For $v \in \Lambda^0$ and for $\mu \in v\Lambda^\infty$, denote by
\[
\mathrm{Orb}^+(v):=r(s^{-1}(v))\text{ and }\mathrm{Orb}(v,\mu):= \bigcup_{n \in \mathbb{N}^k}\mathrm{Orb}^+(\mu(n,n)).
\]
\end{defn}

\begin{defn}
Let $\Lambda$ be a regular topological $k$-graph, and let $V$ be a nonempty precompact open subset of $\Lambda^0$. Then $V$ is said to be \emph{contracting} if there exist finitely many nonempty open subsets $U_i \subset \Lambda^{p_i}$, where $i=1,\dots,l, p_i \in \mathbb{N}^k \setminus \{0\}$, such that
\begin{itemize}
\item[(1)] $r(U_i) \subset V$ for all $1 \leq i \leq l$;
\item[(2)] $\mu(0,p_i \land p_j) \neq \nu(0,p_i \land p_j)$ for all $1 \leq i \neq j \leq l, \mu \in U_i, \nu \in U_j$;
\item[(3)] $\overline{V} \subsetneq \bigcup_{i=1}^{l}s(U_i)$.
\end{itemize}
Furthermore, $\Lambda$ is said to be \emph{contracting} if there exists $v \in \Lambda^0$ such that $\overline{\mathrm{Orb}^+(v)}=\Lambda^0$ and any open neighborhood of $v$ contains an open contracting set.
\end{defn}

\begin{rem}
In order to pursue the simplicity condition of $\Q(\Fth\bowtie\bZ)$, we wish to apply \cite[Theorems~4.7]{Yam09}. However, it was pointed out by Nicolai Stammeier that there is a flaw in the proof of \cite[Theorems~4.7]{Yam09}.
Fortunately, we are able to provide it an alternative proof when $1 \leq k <\infty$
(see Theorem \ref{T:4.7} below and its proof) by invoking the work of Brown-Clark-Farthing-Sims \cite{BCFS14} and the work of Yeend \cite{Yee07}.

For this,  we need to exploit the groupoid C*-algebra technique, which can be referred to \cite{Ren80}. In the sequel, we give a very sketchy introduction to the boundary path groupoid arising from a regular topological $k$-graph (see \cite{Yee07}).

Let $1 \leq k <\infty$ and $\Lambda$ be a regular topological $k$-graph. By recalling the construction of \cite{Yee07}, we get the set of \emph{boundary path} $\partial \Lambda=\Lambda^\infty$, which is endowed with the topology generated by the basic open sets $Z(U):=\{x \in \Lambda^\infty:x(0,n) \in U\}$ where $U$ is an open subset of $\Lambda^n$ for some $n \in \mathbb{N}^k$. The \emph{boundary path groupoid} $\G_\Lambda$ of $\Lambda$ is defined by $\mathcal{G}_\Lambda=\{(x,p-q,y) \in \partial \Lambda \times \mathbb{Z}^k \times \partial \Lambda:\sigma^p(x)=\sigma^q(x)\}$, which is endowed with the topology generated by the basic open sets $Z(U,V):=\{(x,p-q,y) \in U \times \mathbb{Z}^k \times V: \sigma^p(x)=\sigma^q(x)\}$ where $U$ and $V$ are
open in $\Lambda^p$ and $\Lambda^q$ respectively.
\end{rem}

\begin{thm}[{\cite[Theorems~4.7 and 4.13]{Yam09}}]
\label{T:4.7}
Let $1 \leq k <\infty$. Let $\Lambda$ be a regular topological $k$-graph. Suppose $\Lambda$ satisfies Condition~(A) and $\overline{\mathrm{Orb}(v,\mu)}=\Lambda^0$  for any $v \in \Lambda^0$ and $\mu \in v \Lambda^\infty$. Then $\mathcal{O}_{X(\Lambda)}$ is simple. Furthermore, suppose that $\Lambda$ is contracting. Then $\mathcal{O}_{X(\Lambda)}$ is purely infinite.
\end{thm}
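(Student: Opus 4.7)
The plan is to realize $\O_{X(\Lambda)}$ as the groupoid C*-algebra of Yeend's boundary path groupoid $\G_\Lambda$ from \cite{Yee07}, and then extract simplicity and pure infiniteness from structural properties of $\G_\Lambda$ by invoking the framework of Brown-Clark-Farthing-Sims \cite{BCFS14}. Under the standing regularity hypothesis on $\Lambda$, one knows that $\G_\Lambda$ is a Hausdorff, étale, second-countable locally compact groupoid with unit space identified with $\partial\Lambda=\Lambda^\infty$, and that there is a canonical isomorphism $\O_{X(\Lambda)}\cong \ca(\G_\Lambda)$ compatible with the universal Cuntz-Pimsner covariant representation; this is essentially in \cite{Yee07} (cf.\ also \cite{CLSV11}) and will be stated and used as a black box. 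This reduces simplicity to checking that $\G_\Lambda$ is effective and minimal, and reduces pure infiniteness to checking that $\G_\Lambda$ is locally contracting in the sense of Anantharaman-Delaroche.

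For effectiveness, I would argue that Condition~(A) implies the set of aperiodic boundary paths is dense in $\partial\Lambda$, and that each such path has trivial isotropy in $\G_\Lambda$. Indeed, if $x\in\Lambda^\infty$ satisfies $\sigma^p(x)\neq\sigma^q(x)$ for $p\neq q$, then any $(x,p-q,x)\in\G_\Lambda$ forces $\sigma^p(x)=\sigma^q(x)$, hence $p=q$, so the isotropy at $x$ is trivial. For minimality, the image under $r$ of the $\G_\Lambda$-orbit of $x$ is exactly $\bigcup_{n\in\bN^k}\mathrm{Orb}^+(x(n,n))=\mathrm{Orb}(r(x),x)$, whose closure equals $\Lambda^0$ by assumption; a routine argument then shows every nonempty closed invariant subset of $\partial\Lambda$ is all of $\partial\Lambda$. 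Invoking \cite{BCFS14} then yields simplicity of $\ca(\G_\Lambda)\cong\O_{X(\Lambda)}$.

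For pure infiniteness, I would translate a contracting open set $V\subset\Lambda^0$ with witnessing open subsets $U_1,\dots,U_l\subset\Lambda^{p_i}$ into a family of mutually orthogonal compact open bisections of $\G_\Lambda$. Properties (1)--(2) give orthogonality and that their source sets cover $\overline{Z(V)}$, while property (3) (with the strict inclusion $\overline{V}\subsetneq\bigcup_i s(U_i)$) ensures the range projection is strictly dominated by a suitable projection inside $\chi_{Z(V)}$. This yields the Anantharaman-Delaroche locally contracting condition for $\G_\Lambda$, which together with effectiveness forces every nonzero hereditary subalgebra of $\ca(\G_\Lambda)$ to contain an infinite projection; combined with simplicity, $\ca(\G_\Lambda)$ is purely infinite.

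The main obstacle will be the careful verification of effectiveness from Condition~(A): matching the pointwise aperiodicity statement on $\Lambda^\infty$ with the isotropy calculation inside $\G_\Lambda$, and in particular upgrading the $\Lambda^0$-level density in Condition~(A) to density of aperiodic points inside every basic cylinder $Z(U)\subset\partial\Lambda$. The natural route is to use the factorization property together with local homeomorphism of the source map to push Condition~(A) forward along finite paths and thereby produce aperiodic boundary paths in any prescribed cylinder set. A secondary technical point, also requiring care, is checking that the partial isometries one assembles from the contracting data are genuinely supported on open bisections and satisfy the strict domination needed to invoke the locally contracting criterion.
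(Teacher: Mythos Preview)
Your approach is essentially the paper's: pass to Yeend's boundary path groupoid $\G_\Lambda$, verify it is topologically principal and minimal, and invoke \cite{BCFS14}. Three points of comparison are worth recording.

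First, the paper does not re-derive topological principality from Condition~(A) by hand; it simply cites \cite[Theorem~5.2]{Yee07}. The obstacle you flag---upgrading $\Lambda^0$-level density of aperiodic vertices to density of aperiodic boundary paths in every cylinder $Z(U)$---is exactly what Yeend's theorem absorbs, so you can skip that work.

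Second, you omit amenability of $\G_\Lambda$. The paper invokes \cite[Theorem~6.8]{Yee07} to get amenability, whence $\ca(\G_\Lambda)=\ca_r(\G_\Lambda)$. This is not cosmetic: the simplicity criterion in \cite{BCFS14} is for the \emph{reduced} groupoid C*-algebra, and the identification $\O_{X(\Lambda)}\cong\ca(\G_\Lambda)$ (via \cite{CLSV11,SY10}) lands in the \emph{full} C*-algebra. Without the amenability bridge your conclusion would not follow.

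Third, the paper only re-proves the simplicity half; the flaw it is repairing was in \cite[Theorem~4.7]{Yam09}, and for pure infiniteness it simply relies on \cite[Theorem~4.13]{Yam09}. Your locally contracting sketch is more or less how \cite{Yam09} proceeds, so it is not wrong, merely unnecessary here.
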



\begin{proof}
\cite[Theorem~6.8]{Yee07} yields that $\mathcal{G}_\Lambda$ is amenable. So $\ca(\mathcal{G}_\Lambda)=\ca_r(\mathcal{G}_\Lambda)$ is nuclear. By \cite[Theorem 5.20]{CLSV11} and \cite[Corollary~5.2]{SY10}, $\ca(\mathcal{G}_\Lambda) \cong \mathcal{O}_{X(\Lambda)}$. So we must show that $\ca(\G_{\Lambda})$ is simple.

By \cite[Theorem~5.2]{Yee07}, $\mathcal{G}_\Lambda$ is topologically principal. Let $D$ be a non-empty open invariant subset of $\mathcal{G}_\Lambda^0$. Suppose $D \neq \mathcal{G}_\Lambda^0$, for a contradiction. Then there exists $x \in \mathcal{G}_\Lambda^0 \setminus D$, and so $\overline{\mathrm{Orb}(x(0,0),x)}=\Lambda^0$ by our assumption. Since $D$ is open, take $n \in \mathbb{N}^k$ and a non-empty open subset $U$ of $\Lambda^n$ satisfying $Z(U) \subset D$. Then there exist $\mu,\nu \in \Lambda,m \in \mathbb{N}^k$ such that $\mu \in U,s(\mu)=r(\nu),s(\nu)=x(m,m)$. So $y:=\mu\nu\sigma^m(x) \in D$ and $(x,m-(n+d(\nu)),y) \in \mathcal{G}_\Lambda$. Since $D$ is invariant, one has $x \in D$. This is a contradiction.
Hence $D=\mathcal{G}_\Lambda^0$. Therefore $\mathcal{G}_\Lambda$ is minimal. By \cite[Theorem~5.1]{BCFS14}, $\ca(\mathcal{G}_\Lambda)$ is simple.
\end{proof}

Before giving our main results, two lemmas first.

\begin{lem}
\label{L:aper}
Let $\Lambda_\bn$ be the topological $k$-graph constructed in Definition \ref{D:top2}. If $\{\ln {n_i}\}_{1\le i\le k}$ is rationally independent,
then $\Lambda_\bn$ satisfies Condition (A).
\end{lem}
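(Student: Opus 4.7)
The plan is to unpack Condition (A) in the very concrete setting of $\Lambda_\bn$, where the infinite path space is especially simple. Since for every vertex $(z,0)\in\Lambda_\bn^0$ and every $1\le i\le k$ there is exactly one edge $(z,e_i)$ of degree $e_i$ with range $(z,0)$, an infinite path $\mu$ is uniquely determined by its starting vertex: if $\mu(0,0)=(z,0)$, then $\mu(p,q)=(z^{\bn^p},q-p)$ for all $p\le q\in\bN^k$. In particular, the shifted infinite path $\sigma^p(\mu)$ has initial vertex $(z^{\bn^p},0)$, and two infinite paths coincide if and only if they share a starting vertex.

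Next I would translate the hypothesis into a combinatorial statement about the map $p\mapsto\bn^p$. If $\{\ln n_i:1\le i\le k\}$ is rationally independent over $\bQ$, then it is also $\bZ$-independent, so for any $p\ne q$ in $\bN^k$ we have $\sum_i (p_i-q_i)\ln n_i\ne 0$, equivalently $\bn^p\ne\bn^q$. Consequently the set
\[
E:=\{\bn^p-\bn^q:p\ne q\text{ in }\bN^k\}
\]
is a countable subset of $\bZ\setminus\{0\}$. The set of $z\in\bT$ with $z^m=1$ for some $m\in E$ is therefore a countable union of finite sets, hence meagre and of zero Lebesgue measure; its complement in $\bT$ is dense (in fact co-countable).

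Now verifying Condition (A) is short. Fix $v=(z_0,0)\in\Lambda_\bn^0$ and an open neighborhood $V$ of $v$. The neighborhood $V$ contains some $(z',0)$ with $z'\in\bT$ not a root of unity; I would simply pick such a $z'$ close enough to $z_0$, which is possible by the density statement above (it is even enough to avoid roots of unity, since every nonzero $m\in E$ satisfies $z^m=1$ only for $|m|$-th roots of unity). Let $\mu\in(z',0)\Lambda_\bn^\infty$ be the unique infinite path with starting vertex $(z',0)$. For any $p\ne q$ in $\bN^k$, rational independence gives $\bn^p-\bn^q\ne 0$, and since $z'$ is not a root of unity, $z'^{\bn^p-\bn^q}\ne 1$, i.e., $z'^{\bn^p}\ne z'^{\bn^q}$. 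Hence $\sigma^p(\mu)$ and $\sigma^q(\mu)$ have distinct starting vertices, so $\sigma^p(\mu)\ne\sigma^q(\mu)$. This establishes Condition (A).

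There is no serious obstacle; the only point to be careful about is recognising that rational independence of $\{\ln n_i\}$ is exactly what makes the monoid map $\bN^k\to\bN^\times$, $p\mapsto\bn^p$, injective, which in turn is precisely what is needed to exploit the ``generic $z'$'' idea. Everything else is an elementary density argument on $\bT$.
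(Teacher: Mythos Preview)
Your proof is correct and is essentially the same as the paper's: both use rational independence to get $\bn^p\ne\bn^q$ for $p\ne q$, then choose a nearby $z'\in\bT$ that is not a root of unity (the paper writes $z'=e^{2\pi i\theta}$ with $\theta$ irrational) so that the unique infinite path from $(z',0)$ has $\sigma^p(\mu)(0,0)=(z'^{\bn^p},0)$ pairwise distinct. Your digression about the set $E$ is harmless but unnecessary, as you yourself note---avoiding roots of unity already suffices.
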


\begin{proof}
Since $\{\ln {n_i}\}_{1\le i\le k}$ is rationally independent, we have $\bn^p \neq \bn^q$ for all $p \neq q \in \mathbb{N}^k$. Fix $(z,0) \in \Lambda_\bn^0$ and an open neighborhood $V$ of $z$.
Pick up $w \in V$ such that $w=e^{2\pi i \theta}$ with $\theta \in (0,1) \setminus \mathbb{Q}$. Notice that, for any $l_1,l_2 \in \mathbb{Z}, w^{l_1}=w^{l_2}$ if and only if $l_1=l_2$. Let $\mu$ be the unique infinite path in $(w,0)\Lambda_\bn^\infty$ such that $\mu(p,q)=(w^{\bn^p},q-p)$ for all $p \leq q \in \mathbb{N}^k$. For $p \neq q \in \mathbb{N}^k$, since $\bn^p \neq \bn^q$,
we have $\sigma^{p}(\mu)(0,0) \neq \sigma^{q}(\mu)(0,0)$ and so  $\sigma^{p}(\mu) \neq \sigma^{q}(\mu)$. Therefore $\Lambda_\bn$ satisfies Condition (A).
\end{proof}


\begin{lem}\label{approximate lemma}
Let $\mathbb{F}_\theta^+ \bowtie \mathbb{Z}$ and $\mathbb{F}_\alpha^+ \bowtie \mathbb{Z}$ be two standard product of the odometers
$\{(\mathbb{Z},[n_i])\}_{i=1}^{\fk}$ and $\{(\mathbb{Z},[m_j]\}_{j=1}^\ell$, respectively. Suppose that $1 \leq \fk \leq \ell \leq \infty$ and that $n_i=m_i$ for all $i=1,\dots,\fk$. Then there is a unital embedding from $\mathcal{O}_{X(\Lambda_\bn)}$ into $\mathcal{O}_{X(\Lambda_\bm)}$. Hence there exists a unital embedding from $\mathcal{Q}(\mathbb{F}_\theta^+ \bowtie \mathbb{Z})$ into $\mathcal{Q}(\mathbb{F}_\alpha^+ \bowtie \mathbb{Z})$.
\end{lem}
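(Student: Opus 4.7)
The plan is to reduce everything, via Theorem~\ref{T:top2}, to producing a unital embedding $\mathcal{O}_{X(\Lambda_\bn)} \hookrightarrow \mathcal{O}_{X(\Lambda_\bm)}$; the conclusion for the boundary quotient C*-algebras is then immediate. The key observation that drives the construction is that, because $n_i=m_i$ for $1\le i\le \fk$, the topological $k$-graph $\Lambda_\bn$ sits inside $\Lambda_\bm$ in a very concrete way: the inclusion $\iota:\mathbb{N}^\fk\hookrightarrow\mathbb{N}^\ell$, $p\mapsto(p_1,\dots,p_\fk,0,\dots)$, matches $\Lambda_\bn^p$ with $\Lambda_\bm^{\iota(p)}$ (both being $\mathbb{T}$) and the range/source/degree maps agree verbatim, since $\bn^p = \bm^{\iota(p)}$. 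Under this identification $X(\Lambda_\bn)_p$ is exactly $X(\Lambda_\bm)_{\iota(p)}$ as a $C(\mathbb{T})$-correspondence, and the diamond multiplication in the product system $X(\Lambda_\bn)$ restricts from that of $X(\Lambda_\bm)$.

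Let $\jmath^m:X(\Lambda_\bm)\to\mathcal{O}_{X(\Lambda_\bm)}$ denote the universal Cuntz-Pimsner covariant representation. I would define a map $\psi:X(\Lambda_\bn)\to\mathcal{O}_{X(\Lambda_\bm)}$ by setting $\psi_p:=\jmath^m_{\iota(p)}$ via the above identification. Since $\iota$ is an additive map and $\jmath^m$ is multiplicative with respect to $\diamond$, conditions (T1) and (T2) of Definition~\ref{D:Toep} for $\psi$ follow at once from the corresponding properties for $\jmath^m$. For Cuntz-Pimsner covariance, Lemma~\ref{L:CP} reduces the check to the coordinates $e_i$ with $1\le i\le \fk$, for which $(\psi_{e_i},\psi_0)=(\jmath^m_{e_i},\jmath^m_0)$ is covariant because $\jmath^m$ is. Hence the universal property of $\mathcal{O}_{X(\Lambda_\bn)}$ yields a unital $*$-homomorphism $\pi:\mathcal{O}_{X(\Lambda_\bn)}\to\mathcal{O}_{X(\Lambda_\bm)}$ with $\pi\circ\jmath^n=\psi$.

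For injectivity I would invoke the gauge-invariant uniqueness theorem (Theorem~\ref{gauge-inv uni thm}). The gauge action $\gamma^m:\prod_{i=1}^\ell\mathbb{T}\to\mathrm{Aut}(\mathcal{O}_{X(\Lambda_\bm)})$ restricts along the embedding $\prod_{i=1}^\fk\mathbb{T}\hookrightarrow\prod_{i=1}^\ell\mathbb{T}$ (setting the last $\ell-\fk$ coordinates equal to $1$) to a strongly continuous action $\alpha$ on the image $\ca(\psi(X(\Lambda_\bn)))$. On generators, $\alpha_z(\psi_p(x))=\gamma^m_{(z,1,\dots)}(\jmath^m_{\iota(p)}(x))=(z,1,\dots)^{\iota(p)}\jmath^m_{\iota(p)}(x)=z^p\psi_p(x)$ for $z\in\prod_{i=1}^\fk\mathbb{T}$, so $\psi$ admits a gauge action. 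The restriction $\pi\vert_{\jmath^n_0(C(\mathbb{T}))}=\jmath^m_0\vert_{C(\mathbb{T})}$ is injective because, by Theorem~\ref{T:top2} and Theorem~\ref{T:universal} (or directly from the regularity assumption and standard Pimsner theory), the coefficient algebra embeds faithfully into the Cuntz-Pimsner algebra of the regular product system $X(\Lambda_\bm)$. Hence Theorem~\ref{gauge-inv uni thm} applies and $\pi$ is a unital embedding, from which the claim for $\mathcal{Q}(\mathbb{F}_\theta^+\bowtie\mathbb{Z})\hookrightarrow\mathcal{Q}(\mathbb{F}_\alpha^+\bowtie\mathbb{Z})$ follows by Theorem~\ref{T:top2}.

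The only delicate points I anticipate are bookkeeping in the case $\ell=\infty$, where $\prod_{i=1}^\infty\mathbb{T}$ is still a compact group so the restriction of the gauge action remains strongly continuous, and checking that the fiber-level identification $X(\Lambda_\bn)_p\cong X(\Lambda_\bm)_{\iota(p)}$ genuinely respects the inner products and left/right $C(\mathbb{T})$-actions; the latter is really just the observation $\bn^p=\bm^{\iota(p)}$, so both the inner product formula and the multiplication by $\iota\in C(\mathbb{T})$ coincide on the nose.
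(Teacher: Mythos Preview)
Your proposal is correct and follows essentially the same approach as the paper: identify $\mathbb{N}^\fk$ with a subsemigroup of $\mathbb{N}^\ell$, identify the fibers $X(\Lambda_\bn)_p\cong X(\Lambda_\bm)_{\iota(p)}$, restrict the universal representation of $X(\Lambda_\bm)$ to obtain a Cuntz--Pimsner covariant representation of $X(\Lambda_\bn)$, and then apply the gauge-invariant uniqueness theorem together with Theorem~\ref{T:top2}. The only cosmetic difference is that the paper cites \cite[Theorem~4.1, Corollary~5.2]{SY10} for the injectivity of the coefficient map, whereas you deduce it from Theorems~\ref{T:top2} and~\ref{T:universal} (or regularity); both justifications are valid.
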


\begin{proof}
Denote by $\imath:X(\Lambda_\bn) \to \mathcal{O}_{X(\Lambda_\bn)}$ and $\jmath:X(\Lambda_\bm) \to \mathcal{O}_{X(\Lambda_\bm)}$ the universal Cuntz-Pimsner covariant representations of $X(\Lambda_\bn)$ and $X(\Lambda_\bm)$, respectively. We realize $\mathbb{N}^\fk$ as a subsemigroup of $\mathbb{N}^{\ell}$ by $p \mapsto (p,0)$. For $p \in \mathbb{N}^\fk$,
we also realize $X(\Lambda_\bn)_p$ as $X(\Lambda_\bm)_p$ as they are isomorphic as C*-correspondences over $\rC(\mathbb{T})$. For $p \in \mathbb{N}^\fk$, define $\psi_p:X(\Lambda_\bm) \to \mathcal{O}_{X(\Lambda_\bm)}$ to be $\jmath_p$. By piecing $\{\psi_p\}_{p \in \mathbb{N}^\fk}$ together, one obtains a Cuntz-Pimsner covariant representation of $X(\Lambda_\bn)$. Let $h:\mathcal{O}_{X(\Lambda_\bn)} \to \mathcal{O}_{X(\Lambda_\bm)}$ be the unital homomorphism induced from the universal property of $\mathcal{O}_{X(\Lambda_\bn)}$. By \cite[Theorem~4.1, Corollary~5.2]{SY10}, $h \vert_{\imath_0(X(\Lambda_\bn))}$ is injective. Let $\gamma$ be the gauge action for $\jmath$. Then by restriction $\gamma$ induces a gauge action for $\psi$.
Invoking Theorem~\ref{gauge-inv uni thm} yields that $h$ is injective. The second statement follows from Theorem~\ref{T:top2}.
\end{proof}

\begin{thm}
\label{T:simple}
Let $\Lambda_\bn$ be the topological $k$-graph constructed in Definition \ref{D:top2}.
\begin{itemize}
\item[(i)]
$\O_{X(\Lambda_\bn)}$ is nuclear.
\item[(ii)]
Then $\mathcal{O}_{X(\Lambda_\bn)}$ is simple if and only if $\{\ln {n_i}\}_{1\le i\le k}$ is rationally independent.
\item[(iii)]
If $\{\ln {n_i}\}_{1\le i\le k}$ is rationally independent, then $\mathcal{O}_{X(\Lambda_\bn)}$ is purely infinite.
\item[(iv)]
$\O_{X(\Lambda_\bn)}$ is a unital UCT Kirchberg algebra if and only if $\{\ln {n_i}\}_{1\le i\le k}$ is rationally independent.
\end{itemize}
\end{thm}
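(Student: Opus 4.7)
The plan is to treat (i)--(iv) uniformly by passing to the boundary path groupoid $\mathcal{G}_{\Lambda_\bn}$ of $\Lambda_\bn$ and then applying the analytic results already collected. Since $\Lambda_\bn$ is a regular topological $k$-graph, \cite[Theorem~6.8]{Yee07} provides amenability of $\mathcal{G}_{\Lambda_\bn}$ while \cite[Theorem~5.20]{CLSV11} together with \cite[Corollary~5.2]{SY10} yields $\mathcal{O}_{X(\Lambda_\bn)}\cong C^*(\mathcal{G}_{\Lambda_\bn})$. From this one gets nuclearity in (i) at once, and via \cite{Tu99} the UCT needed in (iv); the remaining items are then verifications about $\Lambda_\bn$.

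For the ``if'' direction of (ii), I would assume $\{\ln n_i\}_{i=1}^k$ is rationally independent and invoke Theorem~\ref{T:4.7}. Condition~(A) is exactly Lemma~\ref{L:aper}. For the orbit density, fix $v=(z,0)\in\Lambda_\bn^0$ and $\mu\in v\Lambda_\bn^\infty$; from the structure of $\Lambda_\bn$ one has $\mu(n,n)=(z^{\bn^n},0)$, and
\[
\mathrm{Orb}^+(\mu(n,n))=\{w\in\mathbb{T}:w^{\bn^p}=z^{\bn^n}\text{ for some }p\in\mathbb{N}^k\}
\]
is the union of the $\bn^p$ equally spaced preimages. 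Rational independence forces some $n_i\geq 2$, whence $\overline{\mathrm{Orb}(v,\mu)}=\mathbb{T}=\Lambda_\bn^0$. Theorem~\ref{T:4.7} then delivers simplicity.

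For (iii), I would verify the contracting condition and apply the second half of Theorem~\ref{T:4.7}. Pick an index $i$ with $n_i\geq 2$, a short open arc $V\subset\mathbb{T}=\Lambda_\bn^0$, and the single open set $U\subset\Lambda_\bn^{e_i}$ obtained as $V\times\{e_i\}$ under the identification $\Lambda_\bn^{e_i}\cong\mathbb{T}$. Since the source map sends $(z,e_i)$ to $(z^{n_i},0)$, it multiplies angular length by $n_i$; for $V$ sufficiently small, $\overline{V}\subsetneq s(U)$ and the remaining requirements are vacuous for a single $U$. Combining (i)--(iii) with UCT gives (iv) in the rationally independent case, since a unital UCT Kirchberg algebra is exactly a separable, unital, nuclear, simple, purely infinite C*-algebra satisfying the UCT; in the rationally dependent case (iv) fails by the ``only if'' half of (ii).

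The main obstacle is the ``only if'' direction of (ii). Suppose $\{\ln n_i\}_{i=1}^k$ is rationally dependent; then there exist distinct $p,q\in\mathbb{N}^k$ with $\bn^p=\bn^q$. For \emph{every} $\mu\in\Lambda_\bn^\infty$ the explicit formula for the shift gives $\sigma^p(\mu)=\sigma^q(\mu)$, so Condition~(A) fails uniformly across $\Lambda_\bn^0$. At the level of the boundary path groupoid this means $(x,p-q,x)\in\mathcal{G}_{\Lambda_\bn}$ for every $x\in\Lambda_\bn^\infty$, and the bisection $Z(\Lambda_\bn^p,\Lambda_\bn^q)$ has nonempty interior contained in the isotropy bundle. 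Hence $\mathcal{G}_{\Lambda_\bn}$ is not effective (not topologically principal), and the standard structure theory for amenable \'etale groupoid C*-algebras---complementary to \cite[Theorem~5.1]{BCFS14}---produces a non-trivial ideal of $C^*(\mathcal{G}_{\Lambda_\bn})\cong\mathcal{O}_{X(\Lambda_\bn)}$, for instance via a non-trivial central element supported on the open isotropy. Pinning down this converse at the groupoid level and translating it back to $\mathcal{O}_{X(\Lambda_\bn)}$ is where I expect the main technical work.
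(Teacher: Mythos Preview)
Your outline for (i), the ``if'' direction of (ii), (iii), and the deduction of (iv) matches the paper's argument for finite $k$. However, you have overlooked that the paper allows $k=\infty$, and Theorem~\ref{T:4.7} together with Yeend's groupoid results are stated only for $1\le k<\infty$. The paper handles $k=\infty$ separately in each of (i), (ii) ``if'', and (iii) by invoking Lemma~\ref{approximate lemma} to write $\mathcal{O}_{X(\Lambda_\bn)}$ as an increasing union of the finite-rank algebras $\mathcal{O}_{X(\Lambda_{(n_1,\dots,n_i)})}$, each of which is nuclear (respectively simple, simple and purely infinite) by the finite-$k$ case; your uniform groupoid treatment does not cover this.

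For the ``only if'' direction of (ii) you take a genuinely different route from the paper. The paper does \emph{not} argue at the groupoid level; instead it uses the isomorphism $\mathcal{O}_{X(\Lambda_\bn)}\cong\mathcal{Q}(\mathbb{F}_\theta^+\bowtie\mathbb{Z})$ from Theorem~\ref{T:top2} together with the presentation of Theorem~\ref{T:universal} to build a concrete representation $\pi$ on $\ell^2(\mathbb{Z})$ via $F\delta_m=\delta_{m+1}$ and $G_{x^i_\fs}\delta_m=\delta_{\fs+n_i m}$. When $\bn^p=\bn^q$ with $p\ne q$ one has $\prod_{i\in A}G_{x^i_0}^{q_i-p_i}=\prod_{i\in B}G_{x^i_0}^{p_i-q_i}$ in $B(\ell^2(\mathbb{Z}))$, while a short gauge-action computation shows the corresponding elements differ in $\mathcal{Q}(\mathbb{F}_\theta^+\bowtie\mathbb{Z})$; hence $\ker\pi$ is non-trivial. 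This argument is elementary, complete, and works uniformly for all $k$ including $k=\infty$.

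Your proposed approach---observing that $\bn^p=\bn^q$ forces $\sigma^p=\sigma^q$ globally, so that $\{(x,p-q,x):x\in\partial\Lambda_\bn\}$ is an open bisection inside the isotropy---is correct and does lead to a non-trivial central unitary $1_B$ in $C^*(\mathcal{G}_{\Lambda_\bn})$ (check $1_B\ne 1$ via the faithful conditional expectation onto $C(\mathcal{G}^0)$). This is arguably cleaner than the paper's argument once fleshed out, and avoids Theorem~\ref{T:top2}. But be warned: failure of topological principality alone does \emph{not} imply non-simplicity of a groupoid C*-algebra (group C*-algebras of C*-simple groups are counterexamples), so you cannot simply cite a ``converse'' to \cite{BCFS14}; you really must produce the central element explicitly. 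Also note that this route again presupposes a groupoid model, hence $k<\infty$.
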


\begin{proof}
(i) First of all, suppose $k \neq \infty$.
From the proof of Theorem \ref{T:4.7},  we obtain that $\ca(\G_{\Lambda})$ is nuclear and
$\ca(\G_{\Lambda})\cong \O_{X(\Lambda)}$.
Therefore $ \O_{X(\Lambda_\bn)}$ is nuclear.

Now suppose that $k=\infty$. By Lemma~\ref{approximate lemma}, we obtain an increasing sequence $\{\A_i:=\O_{X(\Lambda_{(n_1,\dots,n_i)})}\}_{i=1}^{\infty}$ of unital C*-subalgebras of $\O_{X(\Lambda_\bn)}$ such that $\bigcup_{i=1}^{\infty}\A_i$ is dense in $\O_{X(\Lambda_\bn)}$. Since each $\A_i$ is nuclear by the preceding paragraph, we deduce
that $\O_{X(\Lambda_\bn)}$ is nuclear.

(ii) The proof of ``If": Suppose that $k \neq \infty$. By Lemma \ref{L:aper}, $\Lambda_\bn$ satisfies Condition (A).

Fix $(z,0) \in \Lambda_\bn^0$, and let $\mu$ be the unique infinite path in $(z,0) \Lambda_\bn^\infty$. For $p \in \mathbb{N}^k$ and $w \in \mathbb{T}$ such that $w^{\bn^p}=z$, we have $r(w,p)=(w,0), s(w,p)=(w^{\bn^p},0)=(z,0)$. So $(w,0) \in \mathrm{Orb}^+((z,0))$.
Let $(z',0) \in \Lambda_\bn^0$ and $\epsilon>0$. Then we can always find $p \in \mathbb{N}^k$ with $\bn^p$ large enough so that the distance between $z'$ and
one of $\bn^p$-th roots of $z$
is less than $\epsilon$. Hence $\mathrm{Orb}^+((z,0))$ is dense in $\Lambda_\bn^0$. Since $\mathrm{Orb}^+((z,0)) \subset \mathrm{Orb}((z,0),\mu)$,
clearly $\mathrm{Orb}((z,0),\mu)$ is dense in $\Lambda_\bn^0$ as well. Therefore by Theorem~\ref{T:4.7}, $\mathcal{O}_{X(\Lambda_\bn)}$ is simple.

Now suppose that $k=\infty$. By Lemma~\ref{approximate lemma}, we obtain an increasing sequence $\{\A_i:=\O_{X(\Lambda_{(n_1,\dots,n_i)})}\}_{i=1}^{\infty}$ of unital C*-subalgebras of $\O_{X(\Lambda_\bn)}$ such that $\bigcup_{i=1}^{\infty}\A_i$ is dense in $\O_{X(\Lambda_\bn)}$. Since each $\A_i$ is simple by the above argument, we deduce
that $\O_{X(\Lambda_\bn)}$ is simple.

The proof of ``Only if": We must show that the rational dependence of $\{\ln {n_i}\}_{1\le i\le k}$ implies that $\O_{X(\Lambda_\bn)}$ is not simple, equivalently $\Q(\Fth\bowtie\bZ)$
is not simple by Theorem \ref{T:top2}. Now suppose that
$\{\ln {n_i}\}_{1\le i\le k}$ is rationally dependent.
Then there exist $p \neq q \in \mathbb{N}^k$ such that $\bn^p=\bn^q$. Let $A:=\{1 \leq i \leq k:p_i<q_i\}$ and $B:=\{1 \leq i \leq k:p_i>q_i\}$. We may assume that $A \neq \mt$. Then $\Pi_{i \in A} n_i^{q_i-p_i}=\Pi_{i \in B} n_i^{p_i-q_i}$. Inspired by \cite{Cun08}, in what follows, we construct a representation of $\Q(\Fth\bowtie \bZ)$ on $\ell^2(\mathbb{Z})$. To this end, let $\{\delta_m\}_{m \in \mathbb{Z}}$ denote the standard orthonormal basis of $\ell^2(\mathbb{Z})$. Define
\begin{align*}
F(\delta_m)&:=\delta_{m+1}\ (m\in\bZ); \\
G_{x^i_\fs}(\delta_m)&:=\delta_{\fs+n_im} \ (m\in\bZ, \fs\in [n_i], 1\le i\le k).
\end{align*}
Then $F$ is a unitary and $G_{x^i_\fs}$'s  are isometries. Some calculations show that $\{F, G_{x^i_\fs}: \fs\in[n_i],  1\le i\le k\}$ satisfy Conditions~(i)--(iii)
of Theorem \ref{T:universal}. By Theorem \ref{T:universal}, there exists a nonzero homomorphism $\pi: \mathcal{Q}(\bF_\theta^+ \bowtie \mathbb{Z}) \to B(\ell^2(\mathbb{Z}))$ such that
$\pi(f)=F, \pi(g_{x^i_\fs})=G_{x^i_\fs}$ for all $\fs\in[n_i], 1\le i\le k$. Since $\prod_{i \in A} n_i^{q_i-p_i}=\prod_{i \in B} n_i^{p_i-q_i}$,  one has
$\prod_{i \in A} G_{x^i_0}^{q_i-p_i}=\prod_{i \in B} G_{x^i_0}^{p_i-q_i}$. Suppose that
$\prod_{i \in A} g_{x^i_0}^{q_i-p_i}=\prod_{i \in B} g_{x^i_0}^{p_i-q_i}$ for a contradiction.
Let $\gamma:\prod_{i=1}^{k}\bT \to \Aut(\Q(\Fth\bowtie \bZ))$ be the gauge action induced from the universal property of $\mathcal{O}_{X(\Lambda_\bn)}$. By Theorem \ref{T:top2},
\begin{align*}
0=\gamma_{z}\left(\prod_{i \in A} g_{x^i_0}^{q_i-p_i}-\prod_{i \in B} g_{x^i_0}^{p_i-q_i}\right)=\prod_{i \in A}z_i^{q_i-p_i}\prod_{i \in A} g_{x^i_0}^{q_i-p_i}-\prod_{i \in B} g_{x^i_0}^{p_i-q_i}
\end{align*}
for all $z \in \prod_{i=1}^{k}\bT$ such that $z_i=1$ whenever $i \in B$. Since $p_i<q_i$ for $i \in A \neq \emptyset$, we deduce that $\prod_{i \in A} g_{x^i_0}^{q_i-p_i}=0$ which is impossible.
So $0\ne \prod_{i\in A} g_{x^i_0}^{q_i-p_i}-\Pi_{i \in B} g_{x^i_0}^{p_i-q_i}\in \ker(\pi)$.
Thus $\ker(\pi)$ is a nontrivial closed two-sided ideal in $\mathcal{Q}(\bF_\theta^+ \bowtie \mathbb{Z})$, implying that $\mathcal{Q}(\bF_\theta^+ \bowtie \mathbb{Z})$ is not simple.

(iii) Suppose that $k<\infty$. Then $\mathcal{O}_{X(\Lambda_\bn)}$ is simple from (ii) above. As shown in (ii), $\mathrm{Orb}^+((1,0))$ is dense in $\Lambda_\bn^0$. Fix an open neighborhood $U$ of $1$. Then there exists $\delta>0$ such that $n_1\delta$ is small enough (say $<1/4$) and $V:=\{e^{2\pi i \theta}:\theta \in (-\delta,\delta)\} \subset U$. Denote by $U_1:=V \times \{e_1\}$. It is straightforward to see that $r(U_1) \subset V \times \{0\}$ and $\overline{V \times \{0\}} \subsetneq s(U_1)$. So $V \times \{0\}$ is contracting. Hence $\Lambda_\bn$ is contracting. By Theorem~\ref{T:4.7}, $\mathcal{O}_{X(\Lambda_\bn)}$ is purely infinite.

Now suppose that $k=\infty$. By Lemma~\ref{approximate lemma}, we obtain an increasing sequence $\{\A_i:=\O_{X(\Lambda_{(n_1,\dots,n_i)})}\}_{i=1}^{\infty}$ of unital C*-subalgebras of $\O_{X(\Lambda_\bn)}$ such that $\bigcup_{i=1}^{\infty}\A_i$ is dense in $\O_{X(\Lambda_\bn)}$. Since each $\A_i$ is simple and purely infinite by the above paragraph, we deduce
that $\O_{X(\Lambda_\bn)}$ is simple and purely infinite.

(iv) Since $\G_\Lambda$ is amenable and $\O_{X(\Lambda)}\cong \ca(\G_\Lambda)$,  $\O_{X(\Lambda)}$ satisfies the UCT due to \cite{Tu99}. The rest of (iv) now easily follows from (i)-(iii). 
\end{proof}

As an immediate consequence of Theorems \ref{T:top2} and \ref{T:simple}, one has

\begin{thm}\label{T:simplepure}
Let $\Fth\bowtie\bZ$ be the standard product of odometers $\{(\bZ$, $[n_i])\}_{=1}^k$. Then
\begin{itemize}
\item[(i)]
$\Q(\Fth\bowtie \bZ)$ is nuclear;
\item[(ii)]
$\Q(\Fth\bowtie \bZ)$ is a unital UCT Kirchberg algebra $\Leftrightarrow$ $\{\ln {n_i}\}_{1\le i\le k}$ is rationally independent $\Leftrightarrow$ $\O_\theta$ is simple
$\Leftrightarrow$ $\Fth$ is aperiodic.
\end{itemize}
\end{thm}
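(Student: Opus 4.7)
Part~(i) and the first bi-implication of part~(ii) are immediate from the two preceding theorems: Theorem~\ref{T:top2} supplies the isomorphism $\Q(\Fth\bowtie\bZ)\cong\O_{X(\Lambda_\bn)}$, and then the nuclearity of Theorem~\ref{T:simple}(i) and the UCT Kirchberg characterization of Theorem~\ref{T:simple}(iv) transfer across this isomorphism without further work.

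The equivalence $\O_\theta$ simple $\Leftrightarrow$ $\Fth$ aperiodic is the standard Kumjian--Pask-type simplicity theorem specialized to the single-vertex setting. Since $\Fth$ has only one vertex, cofinality is automatic, so simplicity of $\O_\theta$ reduces to aperiodicity of $\Fth$; I would cite this from the single-vertex $k$-graph literature (see e.g.~\cite{DY09, DY091}).

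What genuinely requires verification is the equivalence $\Fth$ aperiodic $\Leftrightarrow$ $\{\ln n_i\}_{1\le i\le k}$ rationally independent. I plan to prove the contrapositive in each direction. If $\{\ln n_i\}$ is rationally dependent, then $\bn^p=\bn^q$ for some $p\neq q\in\bN^k$, so both $\Fth^p$ and $\Fth^q$ have the same (finite) cardinality $N:=\bn^p=\bn^q$. Using the odometer commutation relation $\fs+\ft n_i=\ft' +\fs'n_j$, I would exhibit compatible bijections $\Fth^p\leftrightarrow\Fth^q$ which can be patched iteratively to produce an infinite path $\mu\in\Fth^\infty$ satisfying $\sigma^p\mu=\sigma^q\mu$, witnessing periodicity. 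Conversely, if $\{\ln n_i\}$ is rationally independent, then all values $\bn^p$ are pairwise distinct, and an argument parallel to Lemma~\ref{L:aper} (transported from $\Lambda_\bn^\infty$ to $\Fth^\infty$ by encoding an infinite path in terms of its odometer digits) shows that no infinite path in $\Fth^\infty$ can have a non-trivial period.

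The main obstacle will be the explicit construction of the periodic infinite path in the first direction: one must ensure that the iterative patching of the bijections $\Fth^p\leftrightarrow\Fth^q$ respects every $\theta_{ij}$-commutation relation simultaneously, so that the resulting sequence of edges assembles into a well-defined element of $\Fth^\infty$. Once that combinatorial point is settled, combining the three equivalences above with Theorems~\ref{T:top2} and \ref{T:simple} closes the four-way loop of (ii) and completes the proof.
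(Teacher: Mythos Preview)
Your handling of part~(i) and the first equivalence in~(ii) via Theorems~\ref{T:top2} and~\ref{T:simple} is correct and matches the paper. The plan to cite $\O_\theta$ simple $\Leftrightarrow$ $\Fth$ aperiodic from the single-vertex literature is also sound for finite $k$ (for $k=\infty$ a brief direct-limit argument is needed).

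The genuine gap lies in your direct combinatorial proof of $\Fth$ aperiodic $\Leftrightarrow$ $\{\ln n_i\}$ rationally independent: you have the quantifiers reversed. For a single-vertex $k$-graph, aperiodicity means \emph{there exists} an infinite path $\mu$ with $\sigma^p\mu\neq\sigma^q\mu$ for all $p\neq q$; hence periodicity means \emph{every} infinite path admits some non-trivial period. In the direction ``rationally dependent $\Rightarrow$ periodic'', exhibiting one periodic path therefore proves nothing --- indeed, constructing such a path is trivial here, since the all-zeros path $\mu$ with $\mu(m,m+e_i)=x^i_0$ (well defined because $\theta_{ij}(0,0)=(0,0)$) satisfies $\sigma^{e_i}\mu=\mu$ for every $i$ regardless of the $n_i$. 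The same path is a counterexample to your converse claim that rational independence forces \emph{every} path to be aperiodic; what you actually need in that direction is a single aperiodic path, and the ``transport Lemma~\ref{L:aper}'' idea does not obviously supply one in $\Fth^\infty$, since that lemma hinges on choosing an irrational point of $\bT$, a move with no analogue in the discrete graph.

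The paper sidesteps this combinatorics by running the cycle $\Fth$ aperiodic $\Rightarrow$ $\O_\theta$ simple $\Rightarrow$ $\{\ln n_i\}$ rationally independent $\Rightarrow$ $\Fth$ aperiodic, citing \cite{DY09} for the outer implications (with direct-limit patches for $k=\infty$). Only the middle implication is argued from scratch, by contrapositive: when $\bn^p=\bn^q$ with $p\neq q$, the composite of the canonical map $\O_\theta\to\Q(\Fth\bowtie\bZ)$ with the $\ell^2(\bZ)$-representation $\pi$ built in the proof of Theorem~\ref{T:simple} is nonzero but has nontrivial kernel, so $\O_\theta$ is not simple. This reuses earlier machinery and avoids any infinite-path analysis in $\Fth$.
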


\begin{proof}
By Theorems \ref{T:top2} and \ref{T:simple}, it remains to show that $\Fth$ is aperiodic  $\Leftrightarrow$ $\O_\theta$ is simple
$\Leftrightarrow$ $\{\ln {n_i}\}_{1\le i\le k}$ is rationally independent.

$\Fth$ is aperiodic $\Rightarrow\O_\theta$ is simple: If $k \neq \infty$ then this follows from \cite[Corollary 8.6]{DY09}. If $k=\infty$, then there is an increasing sequence $\{\A_i\}_{i=1}^{\infty}$ of C*-subalgebras of $\O_\theta$ such that each $\A_i$ is the C*-algebra of an aperiodic single-vertex finite-rank graph and the union of $\{\A_i\}_{i=1}^{\infty}$ is dense in $\O_\theta$. So $\O_\theta$ is simple.

$\O_\theta$ is simple $\Rightarrow \{\ln {n_i}\}_{1\le i\le k}$ is rationally independent: We prove its contraposition. Suppose that  $\{\ln {n_i}\}_{1\le i\le k}$ is rationally dependent. Notice that $\{g_{x^i_\fs}: \fs\in[n_i], 1\le i\le k\}$ is a Cuntz-Krieger $\bF_\theta^+$-family in $\mathcal{Q}(\Fth\bowtie \bZ)$. Then there is a homomorphism $\rho:\mathcal{O}_{\theta} \to \mathcal{Q}(\bF_\theta^+ \bowtie \mathbb{Z})$ induced from the universal property of $\mathcal{O}_{\theta}$. Let $\pi: \mathcal{Q}(\bF_\theta^+ \bowtie \mathbb{Z}) \to B(\ell^2(\mathbb{Z}))$ be the nonzero homomorphism given in the proof of Theorem~\ref{T:simple}. Since the kernel of $\pi \circ \rho$ is a nontrivial closed two-sided ideal of $\O_\theta, \O_\theta$ is not simple.

$\{\ln {n_i}\}_{1\le i\le k}$ is rationally independent $\Rightarrow\Fth$ is aperiodic: If $k \neq \infty$ then this follows from \cite[Theorem 3.4, Definition 3.6 and Theorem 7.1]{DY09}. If $k=\infty$ and if $\Fth$ is periodic, then there exists $1 \leq l <\infty$ such that the $l$-graph $\bF_\alpha^+$ determined by $\{n_i\}_{i=1}^{l}$ is periodic as well (\cite{Yan15}).
So $\{\ln {n_i}\}_{1\le i\le l}$ is rationally dependent.  Hence $\{\ln {n_i}\}_{1\le i\le k}$ is also rationally dependent. Therefore we are done.
\end{proof}

As an immediate consequence of Theorem \ref{T:simplepure}, the boundary quotient C*-algebra left in \cite{BRRW14} is now well-understood.

\begin{cor}
Let $\Fth\bowtie \bZ$ be the standard product of $2$-odometers over $n$-letter and $m$-letter alphabets with
$\gcd(n,m)=1$.
Then $\Q(\Fth\bowtie \bZ)$ is a unital UCT Kirchberg algebra.
\end{cor}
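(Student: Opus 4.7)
The plan is to reduce the corollary directly to Theorem~\ref{T:simplepure}. By that theorem, with $k=2$ and $(n_1,n_2)=(n,m)$, we have $\Q(\Fth\bowtie\bZ)$ is a unital UCT Kirchberg algebra if and only if $\{\ln n,\ln m\}$ is rationally independent. So the only thing to verify is that the hypothesis $\gcd(n,m)=1$ (together with the implicit $n,m\ge 2$ coming from the setup of standard $2$-odometers, cf.\ the discussion in the introduction) forces $\{\ln n,\ln m\}$ to be rationally independent.

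To check rational independence, I would suppose to the contrary that $p\ln n + q\ln m = 0$ for some integers $p,q$ not both zero. Rearranging gives $n^{|p|} = m^{|q|}$ after moving the negative exponent to the other side, so we would have a positive integer which is a power of both $n$ and $m$. Since $\gcd(n,m)=1$, the fundamental theorem of arithmetic forces $|p|=|q|=0$ unless one of $n,m$ equals $1$, which is ruled out. Hence $\{\ln n,\ln m\}$ is rationally independent.

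Applying Theorem~\ref{T:simplepure}(ii) then yields that $\Q(\Fth\bowtie\bZ)$ is a unital UCT Kirchberg algebra. There is no essential obstacle here; the content is entirely concentrated in Theorem~\ref{T:simplepure}, and the corollary is simply pointing out that the coprimality condition used in \cite{BRRW14} is a special case of (in fact strictly stronger than) the rational independence condition isolated here, thereby settling the example from \cite[Subsection~6.6]{BRRW14}.
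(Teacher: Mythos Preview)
Your proposal is correct and matches the paper's approach exactly: the paper presents this corollary as an immediate consequence of Theorem~\ref{T:simplepure} without further proof, and you have simply spelled out the elementary verification that $\gcd(n,m)=1$ with $n,m\ge 2$ forces $\{\ln n,\ln m\}$ to be rationally independent.
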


\begin{rem}
After we finished this paper, Nicolai Stammeier informed us that, using a completely different approach, it was shown in \cite{BOS15} that $\Q(\Fth\bowtie\bZ)$ is 
nuclear, simple and purely infinite if $\{n_i\}_{i=1}^k \subset \mathbb{N} \setminus \{0,1\}$ is a pairwise coprime set. So Theorem \ref{T:simplepure} generalizes their results. Furthermore, we exhibit several sufficient and necessary conditions for the simplicity of $\Q(\Fth\bowtie\bZ)$.
\end{rem}

As an extreme case, let $k=1$ in Theorem \ref{T:simplepure}. Then we obtain that the boundary quotient C*-algebra $\Q(\Fn\bowtie \bZ)$
of the odometer action on a $n$-letter alphabet with $n\ge 2$ is nuclear, simple and purely infinite.
Recall from Theorem \ref{T:universal} that $\Q(\Fn\bowtie \bZ)$ is the universal C*-algebra
generated by a unitary $f$ and $n$ isometries $g_{x_i}$ $(i\in[n])$ such that
\begin{itemize}
\item[(i)]
$\sum_{i\in[n]}g_{x_i}g_{x_i}^*=1$;
\item[(ii)]
$f g_{x_{i}}= \begin{cases}
    g_{x_{i+1}} &\text{ if } 0 \leq i <n-1 \\
    g_{x_{0}} f &\text{ if } i=n-1.
\end{cases}
$
\end{itemize}

Also, given $n\ge 2$, the \textit{$n$-adic ring C*-algebra $\Q_n$ of the integers} is the universal C*-algebra generated by a unitary $u$ and an isometry $s$ satisfying
\begin{align}
\label{E:Qn}
\sum_{i\in [n]}u^is(u^i s)^*=1\quad \text{and}\quad  u^ns=su.
\end{align}

\begin{cor}
\label{C:Qn}
There is an isomorphism $\pi:  \Q(\Fn\bowtie\bZ)\to \Q_n$ such that $\pi(f)=u$ and $\pi(g_{x_i})=u^is$ for all $i\in[n]$.
\end{cor}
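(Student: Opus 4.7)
The plan is to establish the isomorphism by using the universal properties of both C*-algebras to construct homomorphisms $\pi \colon \Q(\Fn\bowtie\bZ) \to \Q_n$ and $\rho \colon \Q_n \to \Q(\Fn\bowtie\bZ)$ and then verify they are mutually inverse on generators.

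First I would construct $\pi$. Set $F := u$ and $G_{x_i} := u^i s$ for $i \in [n]$ in $\Q_n$. Then $F$ is unitary and each $G_{x_i}$ is an isometry since $(u^i s)^*(u^i s) = s^* u^{-i} u^i s = s^*s = 1$. The defect-free relation (i) of Theorem \ref{T:universal} for $n=1$ dimension is exactly the first relation of \eqref{E:Qn}. For relation (ii), if $0 \le i < n-1$ then $F G_{x_i} = u \cdot u^i s = u^{i+1}s = G_{x_{i+1}}$, and for $i = n-1$ the second relation of \eqref{E:Qn} gives $F G_{x_{n-1}} = u^n s = s u = G_{x_0} F$. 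By the universal property, we obtain $\pi$ as required.

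Next I would construct $\rho$. Set $U := f$ and $S := g_{x_0}$. Clearly $U$ is unitary and $S$ is an isometry. Iterating Theorem \ref{T:universal} (ii), one has $f^i g_{x_0} = g_{x_i}$ for $0 \le i \le n-1$, so
\[
\sum_{i \in [n]} U^i S (U^i S)^* = \sum_{i\in [n]} g_{x_i} g_{x_i}^* = 1.
\]
Similarly, $U^n S = f^n g_{x_0} = f^{n-1} g_{x_1} = \cdots = f g_{x_{n-1}} = g_{x_0} f = S U$. Thus the defining relations \eqref{E:Qn} of $\Q_n$ are satisfied, and the universal property of $\Q_n$ yields $\rho$ with $\rho(u) = f$ and $\rho(s) = g_{x_0}$.

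Finally, checking on generators: $\pi \circ \rho(u) = \pi(f) = u$ and $\pi \circ \rho(s) = \pi(g_{x_0}) = u^0 s = s$, while $\rho \circ \pi(f) = \rho(u) = f$ and $\rho \circ \pi(g_{x_i}) = \rho(u^i s) = f^i g_{x_0} = g_{x_i}$. Hence $\pi$ and $\rho$ are mutually inverse, so $\pi$ is the desired isomorphism. There is no real obstacle here; the only mildly nontrivial point is the iteration showing $f^n g_{x_0} = g_{x_0} f$, which follows immediately from the cyclic structure of condition (ii).
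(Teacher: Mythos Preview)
Your proof is correct, but it takes a genuinely different route from the paper's. The paper constructs only the forward map $\pi$ (exactly as you do) and then, rather than building an inverse, simply invokes Theorem~\ref{T:simplepure}: since $\Q(\Fn\bowtie\bZ)$ is simple, the nonzero homomorphism $\pi$ is automatically injective, and surjectivity is immediate because $u=\pi(f)$ and $s=\pi(g_{x_0})$ lie in the image.

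Your approach constructs an explicit inverse $\rho$ via the universal property of $\Q_n$, checking the two relations \eqref{E:Qn} for $U=f$, $S=g_{x_0}$ (essentially Corollary~\ref{C:properties}(2) and (3) in the case $l=1$, $N=1$). This is more elementary and entirely self-contained: it avoids any appeal to the simplicity of $\Q(\Fn\bowtie\bZ)$, which in the paper rests on the topological $k$-graph machinery, groupoid amenability, and Theorem~\ref{T:4.7}. The paper's argument is shorter given what has already been established, and it showcases the strength of the simplicity theorem; yours would work equally well placed much earlier in the paper, immediately after Theorem~\ref{T:universal}.
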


\begin{proof}
From \eqref{E:Qn} it is easy to check that $\{\pi(f), \pi(g_{x_i}):i\in[n]\}$ satisfies Conditions (i) and (ii) above. So by the universal property of
$\Q(\Fn\bowtie\bZ)$, $\pi$ can be extended to isomorphism as $\Q(\Fn\bowtie\bZ)$ is simple by Theorem \ref{T:simplepure}.
\end{proof}

It turns out that $\Q_n$ is isomorphic to the graph C*-algebra $\O(E_{n,1})$ of the topological graph $E_{n,1}$ of Katsura studied in \cite{Kat08},
where it is shown that $\Q_n$ is nuclear, simple and purely infinite. So we recover this result here.
Also let us remark that $\Q_2$ was systematically studied by Larsen-Li in \cite{LL12}.

\begin{eg}
Consider the standard product $\Fth\bowtie \bZ$ of the odometers $(\bZ,[n_i])$ with $n_i=n$ for all $1\le i\le k<\infty$.
To make our example more interesting, let $n\ge 2$ and $k\ge 2$.
This is a special case of Example \ref{Eg:nrLCM} with $G=\bZ$ and the self-similar action being the odometer.
\end{eg}

By Theorem \ref{T:simplepure}, $\Q(\Fth\bowtie\bZ)$ is not simple.
Actually one can apply some results in \cite{DY09} to obtain its structure as follows.

\begin{obs}
Keep the above notation. One has
\[
\Q(\Fth\bowtie \bZ)\cong \Q_n\otimes \rC(\bT^{k-1}).
\]
\end{obs}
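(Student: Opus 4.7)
The plan is to construct $k-1$ central unitaries in $\Q(\Fth\bowtie\bZ)$ which, together with the embedded copy of $\Q_n$ furnished by Corollary \ref{C:Qn}, realize the claimed tensor product. Using the generators from Theorem \ref{T:universal}, I would set
\[
u_i := \sum_{\fs\in[n]} g_{x^i_\fs}\, g_{x^k_\fs}^* \qfor 1\le i\le k-1,
\]
and $u_k := 1$. Under the standing assumption $\theta_{ij}(\fs,\ft)=(\fs,\ft)$ of Example \ref{Eg:nrLCM}, the commutation relation of Theorem \ref{T:universal}(iii) simplifies to $g_{x^i_\fs} g_{x^j_\ft}=g_{x^j_\fs} g_{x^i_\ft}$ for all indices; a direct telescoping with the Cuntz relation of Theorem \ref{T:universal}(i) then yields $u_i u_i^* = u_i^* u_i = 1$.

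Next I would establish centrality of each $u_i$. Inserting $\sum_\ft g_{x^k_\ft} g_{x^k_\ft}^* = 1$ between $g_{x^k_\fs}^*$ and $g_{x^j_\ft}$ and applying the commutation gives
\[
g_{x^k_\fs}^* g_{x^j_\ft} = \delta_{\fs,\ft}\, u_j,
\]
whence $u_i g_{x^j_\ft}=g_{x^i_\ft} u_j = g_{x^j_\ft} u_i$; in particular the $u_i$'s commute pairwise and one has the crucial identity $g_{x^i_\ft}=u_i g_{x^k_\ft}$. To check $u_i f = f u_i$, I would split each of the sums $\sum_\fs g_{x^i_\fs} g_{x^k_\fs}^* f$ and $\sum_\fs f g_{x^i_\fs} g_{x^k_\fs}^*$ according to whether $\fs=n-1$, invoke Theorem \ref{T:universal}(ii) together with the easy identities $(g_{x^k_\fs})^* f = (g_{x^k_{\fs-1}})^*$ for $\fs\ge 1$ and $(g_{x^k_0})^* f = f (g_{x^k_{n-1}})^*$, and re-index to match the two expressions.

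Viewing $\Q_n\otimes\rC(\bT^{k-1})$ as the universal C*-algebra generated by a copy of $\Q_n$ together with $k-1$ commuting central unitaries, the above data combined with Corollary \ref{C:Qn} produce a unital surjective $*$-homomorphism
\[
\phi:\Q_n\otimes\rC(\bT^{k-1})\longrightarrow\Q(\Fth\bowtie\bZ)
\]
sending the $\Q_n$-generators to $f$ and $g_{x^k_0}$ and the coordinate functions $z_1,\dots,z_{k-1}$ on $\bT^{k-1}$ to $u_1,\dots,u_{k-1}$; surjectivity uses $g_{x^i_\fs}=u_i g_{x^k_\fs}$.

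The hard part will be injectivity of $\phi$. Since $\Q_n$ is simple and nuclear (Theorem \ref{T:simplepure} with $k=1$), every closed two-sided ideal of $\Q_n\otimes\rC(\bT^{k-1})$ has the form $\Q_n\otimes I$ for some closed ideal $I\triangleleft\rC(\bT^{k-1})$; hence $\phi$ is injective iff the joint spectrum of $(u_1,\dots,u_{k-1})$ equals all of $\bT^{k-1}$. To pin this down, I would transport the $\bT^k$-gauge action on $\O_{X(\Lambda_\bn)}$ via Theorem \ref{T:top2}: it acts on $u_i$ by $u_i\mapsto z_i\bar z_k u_i$, so restricting to the subgroup $\{z\in\bT^k:z_k=1\}\cong\bT^{k-1}$ gives a transitive translation action on the (nonempty) joint spectrum of $(u_1,\dots,u_{k-1})$, forcing it to be all of $\bT^{k-1}$. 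The gauge-equivariance step is the essential tool; everything else is direct computation with the relations from Theorem \ref{T:universal}.
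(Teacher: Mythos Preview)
Your argument is correct and complete. It is worth noting that your central unitaries coincide with the paper's: your identity $g_{x^k_\fs}^* g_{x^j_\ft}=\delta_{\fs,\ft}u_j$ specialises to $u_i=g_{x^k_0}^{*}g_{x^i_0}$, which (up to swapping the reference index $1\leftrightarrow k$) are exactly the elements $(e_0^1)^{*}e_0^i$ used in the paper.

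The routes diverge in how the two facts ``these elements are central unitaries generating a full $\rC(\bT^{k-1})$'' and ``the map from the tensor product is injective'' are obtained. The paper imports both from \cite{DY09}: there one already has $\O_\theta\cong\O_n\otimes\rC(\bT^{k-1})$ with the second factor generated by the $(e_0^1)^{*}e_0^i$, so only the commutation with $s_1=f$ needs checking; injectivity of the resulting map comes for free because the $\rC(\bT^{k-1})$ already sits faithfully inside $\O_\theta\subset\Q(\Fth\bowtie\bZ)$. You instead verify unitarity and centrality by hand from the relations of Theorem~\ref{T:universal}, and replace the appeal to \cite{DY09} by a direct spectral argument using the gauge action transported from $\O_{X(\Lambda_\bn)}$ via Theorem~\ref{T:top2}. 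Your approach is longer but entirely self-contained within the paper; the paper's proof is a two-line reduction to prior structural results. Both are valid, and your injectivity argument via the transitive $\bT^{k-1}$-action on the joint spectrum is a clean alternative to citing the external decomposition of $\O_\theta$.
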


\begin{proof}
Notice from \cite[Corollary 7.2, Theorem 8.5 and Corollary 8.7]{DY09} that $\O_\theta\cong\O_n\otimes\ca(\{(e_0^1)^*e_0^i: 1\le i\le k-1\})\cong \O_n\otimes \rC(\bT^{k-1})$.
Note
\[
s_at_u=t_{a\cdot u}s_{a|_u}\Rightarrow t_u s_{a|_u}^*=s_a^*t_{a\cdot u}\Rightarrow s_{a|_u}t_u^*=t_{a\cdot u}^*s_a.
\]
 Then we compute
 \begin{align*}
 (e_0^1)^*e_0^i s_1
 &=(e_0^1)^* e_{1\cdot n-1}^i s_{1|_{n-1}}
   =(e_0^1)^* s_1 e_{n-1}^i
   =(e_{1\cdot (n-1)}^1)^* s_1 e_{n-1}^i\\
 &=s_{1|_{n-1}}(e_{n-1}^1)^* e_{n-1}^i
   =s_{1}(e_{n-1}^1)^* e_{n-1}^i
   =s_{1}(e_{0}^1)^* e_{0}^i
 \end{align*}
 for all $1\le i\le k-1$. Thus
 \[
\Q(\Fth\bowtie \bZ)\cong \Q(\Fn \bowtie \bZ)\otimes \rC(\bT^{k-1}) \cong \Q_n\otimes \rC(\bT^{k-1})
\]
by Corollary \ref{C:Qn}.
\end{proof}

\subsection{Relations between $\mathcal{Q}(\bF_\theta^+ \bowtie \mathbb{Z})$ and $\mathcal{Q}_\mathbb{N}$}
\label{SS:relQN}

Cuntz in \cite[Definition~3.1]{Cun08} defined $\mathcal{Q}_{\mathbb{N}}$ to be the universal C*-algebra generated by a unitary $u$ and a family of isometries $\{s_n\}_{n \in \mathbb{N}^\times}$ satisfying
\[
s_n s_m=s_{nm},\ u^n s_n=s_n u,\ \sum_{t=0}^{n-1}u^{t}s_n s_n^* u^{-t}=1\qforal n,m \in \mathbb{N}^\times.
\]

In what follows, we discuss some relations between $\mathcal{Q}_\mathbb{N}$ and the boundary quotient C*-algebra $\mathcal{Q}(\bF_\theta^+ \bowtie \mathbb{Z})$ of the standard product of odometers $\{(\bZ, [n_i])\}_{i=1}^k$. For this, define
\[
F:=u, \quad G_{x^i_{\ft}}:=u^\ft s_{n_i} \quad (\ft\in[n_i],\ 1\le i\le k).
\]
A simple calculation shows that $\{F, G_{x^i_{\ft}}:\ft\in[n_i], 1 \leq i \leq k\}$ satisfy Conditions (i)--(iii) of Theorem \ref{T:universal}. By
Theorem \ref{T:universal}, there exists a homomorphism
\begin{align}
\label{E:rho}
\rho: \mathcal{Q}(\bF_\theta^+ \bowtie \mathbb{Z}) \to \mathcal{Q}_{\mathbb{N}}
\end{align}
such that
$\rho(f)=F$ and $\rho(g_{x^i_{\ft}})=G_{x^i_{\ft}}$ for all $\ft\in[n_i]$, $1 \leq i \leq k$.

If $k=\infty$ and $\{n_i\}_{i=1}^{\infty}$ is the set of all prime numbers, then $\rho$ is an isomorphism by Theorem~\ref{T:simplepure}. Thus
one has

\begin{cor}
\label{C:rho}
If $k=\infty$ and $\{n_i\}_{i=1}^{\infty}$ is the set of all prime numbers, then $\Q(\Fth\bowtie\bZ)\cong\Q_\bN$.
\end{cor}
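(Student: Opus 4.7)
The plan is to show $\rho$ in \eqref{E:rho} is both surjective and injective.

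For surjectivity, I need to check that the image $\rho(\Q(\Fth\bowtie\bZ))$ contains all generators of $\Q_\bN$, namely $u$ and $\{s_n\}_{n\in\bN^\times}$. The unitary $u = \rho(f)$ lies in the image by construction. For the isometries, I use that $\{n_i\}_{i=1}^{\infty}$ is the set of all primes: by the fundamental theorem of arithmetic, every $n\in\bN^\times$ factors as $n = n_{i_1}\cdots n_{i_r}$, and the defining relation $s_n s_m = s_{nm}$ gives $s_n = s_{n_{i_1}}\cdots s_{n_{i_r}}$. Since $s_{n_i} = \rho(g_{x^i_0})$ (the $\ft=0$ case of $\rho(g_{x^i_\ft}) = u^\ft s_{n_i}$), each $s_n$ lies in the image. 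Hence $\rho$ is surjective.

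For injectivity, I invoke Theorem \ref{T:simplepure}. The key point is that when $\{n_i\}_{i=1}^{\infty}$ is the set of all primes, the family $\{\ln n_i\}_{i=1}^{\infty}$ is rationally independent: a nontrivial rational relation $\sum q_i \ln n_i = 0$ would exponentiate to a nontrivial multiplicative relation among distinct primes, contradicting unique factorization. Theorem \ref{T:simplepure} then tells us that $\Q(\Fth\bowtie\bZ)$ is a unital UCT Kirchberg algebra; in particular it is simple. Since $\rho$ is unital (hence nonzero), its kernel is a proper closed two-sided ideal of a simple C*-algebra and must be $\{0\}$.

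Combining surjectivity with injectivity yields the desired isomorphism $\Q(\Fth\bowtie\bZ)\cong\Q_\bN$. No step presents a serious obstacle: the only arithmetic input is the rational independence of $\{\ln p : p\text{ prime}\}$, and the heavy lifting (simplicity of $\Q(\Fth\bowtie\bZ)$ under rational independence, and the existence and formula of $\rho$) has already been established in Theorem \ref{T:simplepure} and in the discussion preceding \eqref{E:rho}.
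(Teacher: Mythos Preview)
Your proof is correct and follows the same route as the paper: the paper's argument is the single sentence ``then $\rho$ is an isomorphism by Theorem~\ref{T:simplepure},'' and you have simply unpacked this by (a) checking surjectivity via the prime factorization of $n\in\bN^\times$ so that each $s_n$ lies in the image, and (b) deducing injectivity from the simplicity of $\Q(\Fth\bowtie\bZ)$ given by Theorem~\ref{T:simplepure} once $\{\ln n_i\}$ is rationally independent.
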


Let us finish this paper by characterizing when the above homomorphism $\rho$ is injective.

\begin{thm}
The homomorphism $\rho$ in \eqref{E:rho} is injective if and only if $\{\ln {n_i}\}_{1\le i\le k}$ is rationally independent.
\end{thm}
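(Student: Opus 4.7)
The plan is to leverage Theorem \ref{T:simplepure} for one direction and to recycle the gauge-action obstruction already manufactured inside the proof of Theorem \ref{T:simple}(ii) for the other; both directions will then be short.

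For the ``if'' direction, I will assume that $\{\ln n_i\}_{1\le i\le k}$ is rationally independent. Theorem \ref{T:simplepure} then yields that $\mathcal{Q}(\bF_\theta^+\bowtie\mathbb{Z})$ is simple. Since $\rho(f)=u$ is a unitary, we have $\rho(1)=\rho(ff^*)=uu^*=1_{\mathcal{Q}_{\mathbb{N}}}\ne 0$, so $\rho$ is a nonzero homomorphism out of a simple unital C*-algebra, and hence $\ker(\rho)=\{0\}$.

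For the ``only if'' direction, I will assume that $\{\ln n_i\}_{1\le i\le k}$ is rationally dependent and produce an explicit nonzero element of $\ker(\rho)$. Following the proof of Theorem \ref{T:simple}(ii), I pick $p\ne q\in\mathbb{N}^k$ with $\bn^p=\bn^q$, set $A:=\{i:p_i<q_i\}$ and $B:=\{i:p_i>q_i\}$, and observe that $\prod_{i\in A}n_i^{q_i-p_i}=\prod_{i\in B}n_i^{p_i-q_i}$. That proof already shows, via the gauge-action argument combined with each $g_{x^i_0}$ being a nonzero isometry, that the element
\[
X:=\prod_{i\in A}g_{x^i_0}^{q_i-p_i}-\prod_{i\in B}g_{x^i_0}^{p_i-q_i}
\]
is nonzero in $\mathcal{Q}(\bF_\theta^+\bowtie\mathbb{Z})$. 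On the $\mathcal{Q}_{\mathbb{N}}$ side, since $\rho(g_{x^i_0})=s_{n_i}$ and $s$ is a homomorphism of the multiplicative monoid $\mathbb{N}^\times$, I compute
\[
\rho\Big(\prod_{i\in A}g_{x^i_0}^{q_i-p_i}\Big)=s_{\prod_{i\in A}n_i^{q_i-p_i}}=s_{\prod_{i\in B}n_i^{p_i-q_i}}=\rho\Big(\prod_{i\in B}g_{x^i_0}^{p_i-q_i}\Big),
\]
so $X\in\ker(\rho)\setminus\{0\}$, witnessing non-injectivity.

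I do not foresee a serious obstacle; the only thing that requires care is checking that the two products of images in $\mathcal{Q}_{\mathbb{N}}$ collapse to the same isometry $s_N$ with $N:=\prod_{i\in A}n_i^{q_i-p_i}=\prod_{i\in B}n_i^{p_i-q_i}$, which is immediate from the relation $s_n s_m=s_{nm}$, together with the matching numerical identity coming from $\bn^p=\bn^q$.
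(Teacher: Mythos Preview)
Your proof is correct and follows essentially the same approach as the paper: both directions use simplicity from Theorem~\ref{T:simplepure} for injectivity, and for non-injectivity both exhibit a nonzero element of $\ker(\rho)$ built from products of the $g_{x^i_0}$ that collapse under $\rho$ via the relation $s_ns_m=s_{nm}$. The only cosmetic difference is that the paper uses the pair $\prod_{i=1}^{k}g_{x^i_0}^{p_i}$ and $\prod_{i=1}^{k}g_{x^i_0}^{q_i}$ directly, whereas you use the ``reduced'' pair $\prod_{i\in A}g_{x^i_0}^{q_i-p_i}$ and $\prod_{i\in B}g_{x^i_0}^{p_i-q_i}$ already shown nonzero in the proof of Theorem~\ref{T:simple}(ii); your version is arguably more explicit about why the kernel element is nonzero.
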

\begin{proof}
If $\{\ln {n_i}\}_{1\le i\le k}$ is rationally independent, then $\mathcal{Q}(\bF_\theta^+ \bowtie \mathbb{Z})$ is simple by Theorem \ref{T:simplepure}. So $\rho$ is injective.

Conversely, suppose that $\{\ln {n_i}\}_{1\le i\le k}$ is rationally dependent. Then there exist $p \neq q \in \mathbb{N}^k$ such that $\bn^p=\bn^q$. It is straightforward to see that
$\rho\Big(\prod_{i=1}^{k}g_{x_{0}^{i}}^{p_i}\Big)=\rho\Big(\prod_{i=1}^{k}g_{x_{0}^{i}}^{q_i}\Big)$. Hence $\rho$ is not injective.

\end{proof}

\subsection*{Acknowledgments}

The first author wants to thank the second author for being generous in sharing ideas and for numerous valuable discussions. Both authors are very grateful to
Dr. Nicolai Stammeier for providing us with valuable comments, pointing out an error in Subsection \ref{SS:relQN} of an early version of this paper, and bringing the references \cite{ABLS16, BOS15, Stam16} to our attention.

\end{document}